\theoremstyle{definition}
\newtheorem{theorem}{Theorem}[section]
\theoremstyle{definition}
\newtheorem{definition}[theorem]{Definition}
\theoremstyle{definition}
\theoremstyle{definition}
\theoremstyle{definition}
\theoremstyle{definition}
\theoremstyle{definition}
\newtheorem{prop}[theorem]{Proposition}
\theoremstyle{definition}
\newtheorem{lemma}[theorem]{Lemma}
\theoremstyle{definition}
\theoremstyle{definition}
\theoremstyle{definition}
\theoremstyle{definition}
\newtheorem{example}[theorem]{Example}
\theoremstyle{definition}
\theoremstyle{definition}
\theoremstyle{question}
\theoremstyle{definition}
\newtheorem{convention}[theorem]{Convention}
\theoremstyle{definition}
\DeclareMathOperator{\bang}{!}
\title{Intuitionistic Linear Logic with Subexponentials: Type Theory, Categorical Models and Realisability
\footnote{The author is sincerely grateful to Grigory Kondyrev for providing quite a few 
insightful and original ideas and for inspiring the author to revise his approach in many aspects. 
The author is also thankful to Anton Ayzenberg, Murdoch James Gabbay and Nikita Repeev for remarks improving
the ideas of the paper.
The author also thanks Stepan Kuznetsov for the initial impulse resulted in this research.}}
\author{Daniel Rogozin}
\date{ }
\newcommand{\Ob}[1]{\operatorname{Ob}({\mathcal{#1}})}
\providecommand{\keywords}[1]
{
  \textbf{\textit{Keywords---}} #1
}
\make@display@tag\ltx@label{#1}}}}}}
\begin{document}  

\maketitle

\begin{abstract}
In this paper, we present a typed lambda calculus ${\bf SILL}(\lambda)_{\Sigma}$, a type-theoretic version of multiplicative intuitionistic linear logic with subexponentials,
that is, we have many comonadic resource modalities with some interconnections between them given by a subexponential signature $\Sigma$. 
We introduce the concept of a $\Sigma$-assemblage to characterise models of ${\bf SILL}(\lambda)_{\Sigma}$ by expanding the concept of a linear category where one has multiple resource comonads and symmetric lax monoidal comonad morphisms. 
We also generalise several known results from linear logic and show that every $\Sigma$-assemblage can be viewed as a symmetric monoidal closed category equipped with a family
of monoidal adjunctions and morphisms by modernising and generalising Benton's results by involving the formal theory of comonads in the fashion of Street. 
We give a stronger 2-categorical characterisation of $\Sigma$-assemblages and 
show that the 2-category of $\Sigma$-assemblages 1,2-fully faithfully embeds into the 2-category of particular families of monoidal adjunctions, their left morphisms and transformations, that is, polymodal expansions of linear-non-linear models.
In the final section, we describe realisability models for the particular case of a three-element subexponential signature
by describing BCI algebras with extra operators viewed as applicative morphisms and assemblies over them.
\end{abstract}

\keywords{Linear logic, symmetric monoidal categories, subexponentials, comonad, Eilenberg-Moore categories, relevant categories, monoidal adjunction, linear realisability}

\section{Introduction}

The exponential modality $\bang$ is a comonadic operator allowing one to introduce the lacking weakening and contraction
inference rules in linear logic \cite{girard1987linear}. $\bang$ is given by the so-called promotion and dereliction rules
(aka introduction and elimination rules) alongside the weakening and contraction rules for modalised formulas. 
The standard presentation consists of the below Gentzen-style inference rules:

\begin{minipage}{0.45\textwidth}
  \begin{prooftree}
    \AxiomC{$\bang A_1, \ldots, \bang A_n \vdash A$}
    \RightLabel{$\bang_{\bf R}$}
    \UnaryInfC{$\bang A_1, \ldots, \bang A_n \vdash \bang A$}
  \end{prooftree}

  \begin{prooftree}
    \AxiomC{$\Gamma \vdash B$}
    \RightLabel{${\bf W}$}
    \UnaryInfC{$\Gamma, \bang A \vdash B$}
  \end{prooftree}
\end{minipage}
\hfill
\begin{minipage}{0.45\textwidth}
  \begin{tabular}{p{\textwidth}}
    \begin{prooftree}
      \AxiomC{$\Gamma, A \vdash B$}
      \RightLabel{$\bang_{\bf L}$}
      \UnaryInfC{$\Gamma, \bang A \vdash B$}
    \end{prooftree}

    \begin{prooftree}
      \AxiomC{$\Gamma, \bang A, \bang A \vdash B$}
      \RightLabel{${\bf C}$}
      \UnaryInfC{$\Gamma, \bang A \vdash B$}
    \end{prooftree}
  \end{tabular}
\end{minipage}

\vspace{\baselineskip}

Subexponentials are operators of the form $\bang_{s}$ that may allow (or not) weakening or contraction where $s$ ranges over some set of indices. 
We have a family of non-equivalent modal operators $\bang_{s}$ indexed by elements of a set $I$, and each $\bang_{s}$ has its own ability to introduce
this or that structural rule. Subexponential modalities were initially introduced in \cite{danos1993structure}
as part of ``multicolour linear logic'' and then developed in \cite{nigam2017subexponentials}
to propose a proof-theoretic framework for concurrency theory and linear logic programming.
We refer the reader interested in practical uses of the logical framework based on linear logic with subexponentials to \cite{nigam2009algorithmic}, where the authors
develop a simple specification language expressive enough to model data structures and iterative procedures.
Subexponentials in a non-commutative setting are also of interest in mathematical linguistics, see \cite{kanovich2019subexponentials} and \cite{mcpheat2023categorical}.
From a linguistic point of view, structural rules are generally pathological since they change the structure of a sentence,
so they might not stay grammatically correct. Still, the contraction rule allows dealing with such cases, where the localised
duplication is unavoidable, for example, in analysing so-called ``parasitic extractions'' as discussed in \cite{kanovich2017undecidability}.

Linear logic also provides a framework for functional programming with more flexible mechanisms of resource management
by adapting ideas of the Curry-Howard correspondence, see \cite{wadler1990linear} and \cite{girard1987linear}.
Some of those ideas have already found their practical implementation (to a certain extent) in such functional languages
as Haskell (\cite{bernardy2017linear}) and Idris 2 (\cite{brady2021idris}). One can transfer the same ideas
to other substructural logics, in particular affine logic admitting weakening and relevant logic admitting contraction.
Programs well-typed in affine lambda calculus, for example, are known to terminate in linear time (see \cite[§1]{curien2005introduction}),
whereas type systems based on relevant logic are of interest for compiler optimisations, see \cite{walker2005substructural}, and intersection types, see
\cite{bakel2011strict}.  

As regards the questions of semantics, linear logic has a number of semantic frameworks reflecting the intuition
that we interpret provability through resource-aware actions, not statements being valid in the sense of classical or intuitionistic logic,
see \cite[Part III, Part VI]{girard2011blind} for a more detailed discussion on geometry of interaction and coherence spaces.
Categorical semantics of linear logic, as it is presented, for example, in \cite{abramsky2011introduction} and \cite{mellies2009categorical},
generalises the formal properties of coherence spaces and allows interpreting proofs as morphisms in symmetric monoidal closed categories.

In this paper, we introduce a type-theoretic formulation of multiplicative intuitionistic linear logic
with subexponentials. We consider a linear type theory called ${\bf SILL}(\lambda)_{\Sigma}$, where $\Sigma$ is a so-called \emph{subexponential signature},
that is, a preorder \footnote{We do not require a subexponential signature to be finite since the cardinality aspects do not affect general semantic matters.} with two upward closed subsets $W$ and $C$, and we have a family of operators $(\bang_s)_{s \in \Sigma}$ with the following intuition.
If $s_1 \preceq s_2$, then $\bang_{s_2}$ is stronger than $\bang_{s_1}$ in the following sense: $\bang_{s_2} A \to \bang_{s_1} A$ for any $A$. 
If $s \in W$, then the corresponding $\bang_s$ admits weakening, i.e. the formula $\bang_s A \to \mathds{1}$ is provable for any $A$.
Similarly, if $s \in C$, then $\bang_s A \to \bang_s A \otimes \bang_s A$ is provable for any $A$. Type-theoretically, we have a similar intuition:
\begin{itemize}
  \item $M : \bang_s A$ for $s \in W \cap C$ means that $M$ is an element of type $A$ that we can use arbitrarily many times.
  So $\bang_s$ has the behaviour of the usual $\bang$ operator from linear logic.
  That is, $M$ is an element of an \emph{intuitionistic} (or \emph{normal}) type.
  \item $M : \bang_s A$ for $s \in C \setminus W$ means that $M$ is an element of $A$ that one should consume \emph{at least} once.
  In other words, $M$ is an element of a \emph{relevant} type.
  \item $M : \bang_s A$ for $s \in W \setminus C$ means that $M$ is an element of $A$ that one should consume \emph{at most} once.
  In other words, $M$ is an element of an \emph{affine} type.
\end{itemize}

We also consider the particular case ${\bf SILL}(\lambda)_{\Sigma_3}$ as it satisfies the principle introducing a naturally occurring policy
of how several modes of resource management are connected with each other. Informally, this principle can be formulated
with two (meta)implications (aka symmetric lax monoidal comonad morphisms in category-theoretic terms): $\bang_{\bf i} A \vdash \bang_{\bf r} A$ and $\bang_{\bf i} A \vdash \bang_{\bf a} A$.
That is, if one can duplicate (destroy) an object in a ``normal'' way,
then one can duplicate (or destroy) this entity as an object of a relevant (or affine) type. However, this is not the only way of expanding linear logic with modalities since 
in many cases the presence of many non-equivalent subexponentials with more complicated interaction between them
is also of interest, for example, in concurrent programming, see \cite{olarte2015subexponential}. 
In such generalisations, we consider the generalised form of the promotion rule as it is given in \cite{kanovich2019subexponentials}:
\begin{prooftree}
  \AxiomC{$\bang_{s_1} A_1, \ldots, \bang_{s_n} A_n \vdash A$}
  \AxiomC{$s \preceq s_1, \ldots, s_n$}
  \BinaryInfC{$\bang_{s_1} A_1, \ldots, \bang_{s_n} A_n \vdash \bang_s A$}
\end{prooftree}
which is equivalent to the principle $\bang_{s_1} A_1 \otimes \bang_{s_2} A_2 \vdash \bang_s (A_1 \otimes A_2)$ for $s \preceq s_1, s_2$.
We develop the Curry-Howard correspondence for the whole family of subexponential enrichments of intuitionistic linear logic alongside their categorical semantics
by interpreting the preordering relation $s_1 \preceq s_2$ as the symmetric lax monoidal comonad morphism $\bang_{s_2} \Rightarrow \bang_{s_1}$.

In categorical semantics of linear logic, the exponential modality corresponds to a comonad associating 
a cocommutative comonoid with every object of the underlying symmetric monoidal category, see \cite[§1.8.4]{abramsky2011introduction}, \cite{seely1989linear}, \cite[Chapter 12]{Troelstra1992-TROLOL} and \cite{benton1993linear}. 
In computational terms, $\bang$ allows relaxing the resource usage restrictions: by default anything can be used exactly once, but with $\bang$
we can locally address to a particular formula as many times as we want. Logically, $\bang$ allows embedding
intuitionistic logic into linear logic and manipulating duplications and deletions in a restricted, localised way.

Subexponentials have been already studied quite comprehensively from the aspects of proof theory, linguistic applications
and such topics in applied computer science as epistemic and spatial concurrent constraint programming (see \cite[§7-8]{nigam2017subexponentials}).
However, the semantic aspects of subexponentials have remained somewhat overlooked. Although such aspects do not play a significant role
in developing practical frameworks, the semantic analysis would allow us to equip subexponentials with mathematical content
and also provide the framework with several resource policies within a single system with the denotational semantics.

\subsection{Map of the paper}
In Section~\ref{sill:lambda:syntax}, we axiomatise a family of natural deduction calculi ${\bf SILL}(\lambda)_{\Sigma}$ with proof-terms
and proof normalisation rules. We favour the Curry-Howard style presentation
of all those calculi for two reasons. First, given such a presentation, one can think of ${\bf SILL}(\lambda)_{\Sigma}$ as a type-theoretic foundation for a strongly linearly typed functional programming language with several policies
of how the user consumes data. Secondly, proof-terms in our approach come along with the corresponding proof-conversion rules,
so if proof-terms reflect morphisms in syntax, then the conversion rules syntactically represent the axioms of a $\Sigma$-assemblage.
Thus, the semantic interpretation is defined for both derivability and proof transformations.

The rest of the sections in the paper develop a semantic analysis for ${\bf SILL}(\lambda)_{\Sigma}$
and how categorical models of those systems can be represented. 
Section~\ref{preliminaries} contains all necessary preliminary concepts related to 2-categories and symmetric monoidal categories.
In Section~\ref{cocteau:categories:sem}, we introduce the notion of a $\Sigma$-assemblage, that is, a symmetric monoidal closed 
category with a family of symmetric lax monoidal comonads (with the resource policies determined by the corresponding indices) and relations between them. 
Further, we formulate the theorem that every $\Sigma$-assemblage is a model of
the many-sorted equational theory generated by ${\bf SILL}(\lambda)_{\Sigma}$-proof conversions. 

In Section~\ref{cocteau:marais}, we first elaborate on how one can characterise so-called relevant categories,
that is, symmetric monoidal categories where each object $A$ can be copied with a natural transformation $\operatorname{copy}$
with the components $\operatorname{copy}_A : A \to A \otimes A$. We treat relevant categories separately because of the important difference
between them and Cartesian categories in the following aspect. It is well known that a symmetric monoidal category $\mathcal{C}$
is Cartesian if and only if the forgetful functor ${\bf coMon}(\mathcal{C}) \to \mathcal{C}$ is an isomorphism, see, e.g., \cite[Corollary 19]{mellies2009categorical},
where ${\bf coMon}(\mathcal{C})$ is the category of cocommutative comonoids in $\mathcal{C}$.
As already discussed in \cite{rel:cat:nlab}, we only have a weaker criterion: a symmetric monoidal category
is relevant if and only if the forgetful functor ${\bf coSem}(\mathcal{C}) \to \mathcal{C}$ has a strict symmetric monoidal section that does not have 
to be an isomorphism. The aforementioned
nLab article gives neither a complete proof of this fact nor a reference, we give a proof, which we have not found in this form in the literature.
Further, we modify Benton's results from \cite{benton1994mixed} connecting
linear-non-linear models and linear categories. We introduce $\Sigma$-boutons
to characterise $\Sigma$-assemblages with symmetric monoidal categories 
equipped with resource modalities using the characterisation results on relevant categories from the previous section. 
Generally, we unveil each comonad as a monoidal adjunction with the corresponding Eilenberg-Moore categories of coalgebras. 
We provide the 2-categorical approach based on the formal theory of comonads
to characterise $\Sigma$-assemblages with monoidal adjunctions, 
but we also treat strict functors of $\Sigma$-assemblages and their natural transformations. 
We modernise the presentation of unveiling models of ${\bf SILL}(\lambda)_{\Sigma}$ with Benton-style adjunctions via 2-category theory
and the formal theory of comonads in the fashion of Street \cite{street1972formal} in order to track the relations between different resource policies as morphisms between adjunctions.

In Section~\ref{realisability}, we give the realisability interpretation of ${\bf SILL}(\lambda)_{\Sigma_3}$ in order to equip proofs in this type theory
with low-level computational data coming from substructural combinatory algebras. We define relational subexponential linear combinatory algebras, that is, BCI-algebras
equipped with three comonadic applicative morphisms encoding the behaviour of the exponential as well as the affine and the relevant subexponential. We show that
the categories of assemblies and modest sets over a relational subexponential linear combinatory algebra are $\Sigma_3$-assemblages, and therefore, models of ${\bf SILL}(\lambda)_{\Sigma_3}$.

Section~\ref{conclusion} is a brief summary with the discussion on further research directions.
\section{Syntax of Linear Type Theory with Subexponentials}~\label{sill:lambda:syntax}

\begin{definition}
  Let $(I, \preceq)$ be a preorder, a \emph{subexponential signature}
  is a quadruple $\Sigma = (I, \preceq, W, C)$ where $W$ (weakening) and $C$ (contraction)
  are upward closed subsets of $I$.
\end{definition}
For more convenience, we do not distinguish the carrier preorder of a subexponential signature with a subexponential signature itself,
so $s \in \Sigma$ means $s \in I$.

The intuitive idea behind the notion of a subexponential signature is that
we have a preordered family of indices of subexponential modal operators $\{ \bang_s \: | \: s \in \Sigma \}$
and $W$ and $C$ are subsets of those indices whose operators admit weakening and contraction respectively.
The preorder relation on subexponential indices has the following informal semantics. If $s_1 \preceq s_2$,
the operator $\bang_{s_2}$ is stronger than $\bang_{s_1}$, that is, $\bang_{s_2} A \vdash \bang_{s_1} A$.
This is why we require $W$ and $C$ to be upward closed: the property of admitting weakening or contraction
is inherited by stronger modalities from weaker ones. We capture this idea rigorously below. 

The definition of a well-formed formula is specified by the corresponding grammar.

\begin{definition}~\label{formula:def}
  Let $\Sigma = (I, \preceq, W, C)$ be a subexponential signature and let $\{ p_i \: | \: i < \omega \}$ 
  be a countable set of propositional variables. The set of $\Sigma$-formulas is generated by the following grammar:
  \[
    A, B ::= p_i \: | \: \mathds{1} \: | \: (A \multimap B) \: | \: (A \otimes B) \: | \: (\bang_s A)_{s \in \Sigma}
  \]
\end{definition}

By default, subexponentials bind more strongly than binary connectives, whereas the tensor product connective has a higher priority than linear implication.

If $s \in W \: (C)$, then $\bang_s$ is an \emph{affine} (\emph{relevant}) subexponential. $\bang_s$ is \emph{exponential}
if $s \in W \cap C$.

The formalism we present below is inspired by \cite{troelstra1995natural} and \cite{benton1993linear}. In fact, we can think of it as the expansion of the 
linear type theory with additional subexponentials and relations between them defined by the structure of the underlying preorder. Let us define linear terms first.

\begin{definition}[Preterms]

  Let $\Sigma = (I, \preceq, W, C)$ be a subexponential signature and let $\operatorname{Var} = \{ x_i \: | \: i < \omega \}$ be a countable alphabet of variables. 
  The set of preterms is generated by the following BNF-grammar.
  \[
  \begin{array}{lll}
  & M, N ::= x_i \: | \: {\bf 1} \: | \: {\bf let} \: {\bf 1} = M \: {\bf in} \: N \: | \: \lambda x. M \: |  \: M N \: | \: M \otimes N \: | \: {\bf let} \: x \otimes y \: = \:M \: {\bf in} \: N \: | & \\
  & \:\:\:\: (\bang_s M \: {\bf with} \: \vec{x} = \vec{N})_{s \in \Sigma} \: | \: ({\bf der}_s(M))_{s \in \Sigma} \: | \: ({\bf del}_{s}(M; N))_{s \in W} \: | \: ({\bf let}_s \: x, y = M \: {\bf in} \: N)_{s \in C} \: & 
  \end{array}
  \]
\end{definition}

A preterm $M$ is a \emph{linear lambda term} (or simply \emph{term}) if it is well-typed, that is, there is a context $\Gamma$ and a type $A$ such that $\Gamma \vdash M : A$
is derivable by the typing rules from Figure~\ref{fig:1}. So we assume that every free variable occurs exactly once as in the term assignment for the multiplicative-exponential fragment of intuitionistic linear logic. 
We do not bother ourselves with the rigorous definitions of free variables and linear substitution, the reader can transfer them from \cite[Definition 3.14, Definition 3.15]{UCAM-CL-TR-346}.

A \emph{context} $\Gamma$ is a finite multiset of variable declarations of the form $x : A$, where $x$ is a variable and 
$A$ is a formula. A \emph{typing judgement} is a typing declaration $M : A$ for a term $M$.
$\Gamma \vdash M : A$ stands for ``a typing judgement $M : A$ is provable from $\Gamma$'' by the inference rules from Figure~\ref{fig:1}.
Such a term assignment directly generalises the linear typed lambda calculus for intuitionistic linear logic from,
for example, \cite[Chapter 6]{Troelstra1992-TROLOL} for an arbitrary subexponential signature $\Sigma$.

\begin{figure}[H]
  \hrule
  \caption{System ${\bf SILL}(\lambda)_{\Sigma}$}~\label{fig:1}

  \begin{minipage}[b]{0.33333\textwidth}
    \begin{small}
      \begin{prooftree}
        \AxiomC{$ $}
        \RightLabel{$\mathds{1}{\bf I}$}
        \UnaryInfC{$\vdash {\bf 1} : \mathds{1}$}
      \end{prooftree}
    \end{small}
    \end{minipage}%
    \begin{minipage}[b]{0.33333\textwidth}
      \begin{small}
        \begin{prooftree}
          \AxiomC{$ $}
          \RightLabel{\bf ax}
          \UnaryInfC{$x : A \vdash x : A$}
        \end{prooftree}
        \end{small}
    \end{minipage}%
    \begin{minipage}[b]{0.33333\textwidth}
    \begin{small}
      \begin{prooftree}
        \AxiomC{$\Gamma \vdash M : \mathds{1}$}
        \AxiomC{$\Delta \vdash N : B$}
        \RightLabel{$\mathds{1}{\bf E}$}
        \BinaryInfC{$\Gamma, \Delta \vdash {\bf let} \: {\bf 1} = M \: {\bf in} \: N : B$}
      \end{prooftree}
    \end{small}
    \end{minipage}
  \hfill
  \begin{minipage}{0.45\textwidth}
    \begin{small}
    \begin{prooftree}
      \AxiomC{$\Gamma, x : A \vdash M : B$}
      \RightLabel{$\multimap{\bf I}$}
      \UnaryInfC{$\Gamma \vdash \lambda x. M : A \multimap B$}
    \end{prooftree}

    \begin{prooftree}
      \AxiomC{$\Gamma \vdash M : A$}
      \AxiomC{$\Delta \vdash N : B$}
      \RightLabel{$\otimes{\bf I}$}
      \BinaryInfC{$\Gamma, \Delta \vdash M \otimes N : A \otimes B$}
    \end{prooftree}
  \end{small}
  \end{minipage}
  \begin{minipage}{0.45\textwidth}
    \begin{tabular}{p{\textwidth}}
      \begin{small}
      \begin{prooftree}
        \AxiomC{$\Gamma \vdash M : A \multimap B$}
        \AxiomC{$\Delta \vdash N : A$}
        \RightLabel{$\multimap{\bf E}$}
        \BinaryInfC{$\Gamma, \Delta \vdash M N : B$}
      \end{prooftree}

      \begin{prooftree}
        \AxiomC{$\Gamma \vdash M : A \otimes B$}
        \AxiomC{$\Delta, x : A, y : B \vdash N : C$}
        \RightLabel{$\otimes{\bf E}$}
        \BinaryInfC{$\Delta, \Gamma \vdash {\bf let} \: x \otimes y \: = \:M \: {\bf in} \: N : C$}
      \end{prooftree}
    \end{small}
    \end{tabular}
  \end{minipage}
  \vspace{\baselineskip}

  \begin{minipage}{0.7\textwidth}
    \begin{centering}
    \begin{small}
    \begin{prooftree}
      \AxiomC{$\Gamma_1 \vdash M_1 : \bang_{s_{1}} A_1, \ldots, \Gamma_n \vdash M_n : \bang_{s_{n}} A_n$}
      \AxiomC{$x_1 : \bang_{s_{1}} A_1, \ldots, x_n : \bang_{s_{n}} A_n \vdash N : A$}
      \RightLabel{${\bang_s}_I$}
      \BinaryInfC{$\Gamma_1, \ldots, \Gamma_n \vdash \bang_s \: N \: {\bf with} \: \vec{x} = \vec{M}: \bang_s A$}
    \end{prooftree}
    \end{small}
    \end{centering}
    where $s \preceq s_{1}, \ldots, s_{n}$.
  \end{minipage}
  \hfill
  \begin{minipage}{0.2\textwidth}
    \begin{centering}
    \begin{small}
    \begin{prooftree}
      \AxiomC{$\Gamma \vdash M : \bang_s A$}
      \RightLabel{${\bang_s}_E$}
      \UnaryInfC{$\Gamma \vdash {\bf der}_{s} \: (M) : A$}
    \end{prooftree}
    for each $s \in \Sigma$.
    \end{small}
  \end{centering}
  \end{minipage}
  \vspace{\baselineskip}

  \begin{minipage}{0.45\textwidth}
    \begin{small}
      \begin{prooftree}
        \AxiomC{$\Gamma \vdash M : \bang_{s} A$}
        \AxiomC{$\Delta \vdash N : B$}
        \RightLabel{${\bf W}$}
        \BinaryInfC{$\Gamma, \Delta \vdash {\bf del}_{s}(M; N) : B$}
      \end{prooftree}
      for $s \in W$.
    \end{small}
  \end{minipage}
  \hfill
  \begin{minipage}{0.45\textwidth}
    \begin{tabular}{p{\textwidth}}
    \begin{small}
      \begin{prooftree}
        \AxiomC{$\Gamma \vdash M : \bang_{s} A$}
        \AxiomC{$\Delta, x : \bang_{s} A, y : \bang_{s} A \vdash N : B$}
        \RightLabel{${\bf C}$}
        \BinaryInfC{$\Gamma, \Delta \vdash {\bf let}_{s} \: x, y = M \: {\bf in} \: N : B$}
      \end{prooftree}
    \end{small}
      for $s \in C$.
    \end{tabular}
  \end{minipage}
  \hrule
\end{figure}

As in other term assignments for constructive and substructural logic, we have to detect the patterns of proof simplification given as
term rewriting to equip the typing inference rules with rules allowing one to reduce the depth of an inference tree.

\begin{definition}~\label{conversion:def}
  Let $M, N$ be terms, $\Gamma$ a context and $A$ a type, a \emph{conversion in context} is a judgement of the form
  $\Gamma \vdash M \triangleleft N : A$ that we read as $M$ converts to $N$ in a context $\Gamma$ and both $M$ and $N$
  have type $A$. The conversion rules are specified in Appendix~\ref{appendix}. $\trianglelefteq$ (i.e. the \emph{multistep conversion relation}) is the least preorder relation that contains $\triangleleft$.
\end{definition}

We have the following facts about proof conversions and, in particular, $\beta$-reduction.

\begin{theorem}[Normalisation and Confluence]~\label{proof:normalisation}
  \begin{enumerate}
  \item Let $\Gamma \vdash M : A$ be a typing judgement provable in ${\bf SILL}(\lambda)_{\Sigma}$ and $\Gamma \vdash M \trianglelefteq N : A$,
  then the ${\bf SILL}(\lambda)_{\Sigma}$-derivation of $\Gamma \vdash M : A$ can be transformed into the ${\bf SILL}(\lambda)_{\Sigma}$-derivation of
  $\Gamma \vdash N : A$.
  \item The $\beta$-reduction relation $\triangleleft_{\beta}$, that is, $\triangleleft$ restricted to the $\beta$-reduction rules, is strongly normalisable.
  \item Let $M, N, P$ be terms such that $M \trianglelefteq_{\beta} N$ and $M \trianglelefteq_{\beta} P$,
  then there is a term $P'$ such that $N \trianglelefteq_{\beta} P'$ and $P \trianglelefteq_{\beta} P'$,
  where $\trianglelefteq_{\beta}$ is the reflexive transitive closure of $\triangleleft_{\beta}$.
  As a corollary, a normal form is unique up to $\alpha$-equivalence.
  \end{enumerate}
\end{theorem}

The proof normalisation part is proved by induction on the generation of $M \triangleleft N$.
The strong normalisation and confluence parts are shown by the Tait-Girard technique 
(in particular, we refer to the proof based on candidates for the linear term calculus as in Bierman's PhD thesis in \cite[§3.5]{UCAM-CL-TR-346}), so we omit the detailed proof.

\section{Category-theoretic Preliminaries}~\label{preliminaries}

In this section, we outline the required concepts from 2-category theory and refine the formal theory of comonads.

We assume that the reader is familiar with the underlying notions of category theory such as (1-)categories, functors, natural transformations and adjunctions. In further sections, we will obtain some results about categories of generally large categories and such objects
cannot be defined in the standard set-theoretic foundations of mathematics such as ZFC or NBG, so we accept the foundations based
on Grothendieck universes as they are described in \cite[§1.2.15]{lurie2009higher} or \cite[§8]{shulman2008set}, but we do not bother ourselves with a deeper elaboration 
on this matter.

We assume that such concepts as 2-category, (strict) 2-functors, strict natural transformations, modifications are known to the reader.
Otherwise, we refer to \cite[Chapter 2]{johnson20212} and \cite{lack20092} for a more thorough and systematic introduction.

We fix ${\bf SMC}$ as the 2-category of symmetric monoidal categories, symmetric lax monoidal functors
and symmetric lax monoidal natural transformations. ${\bf SMC}_{\operatorname{str}}$ is a 2-subcategory of ${\bf SMC}$ where 1-cells are strict symmetric monoidal functors.
${\bf SMCC}_{str}$ is the full 1,2-subcategory of ${\bf SMC}_{str}$ restricted to those SMCs which are closed.

Let us recall how one treats adjunctions in a 2-categorical setting, the reader can find more details in \cite[§2]{kelly2006review}
and also \cite[Section 3]{kaledin2020adjunction} and \cite{riehl2016homotopy} for more advanced aspects of 2-categorical 
adjunctions.
\begin{definition}~\label{2-cat:adj}
Let $\mathcal{C}$ be a 2-category and let $A, B \in \operatorname{Ob}(\mathcal{C})$ be 0-cells, an \emph{adjunction} in $\mathcal{C}$, denoted as $\eta, \varepsilon: f^{\natural} \dashv f^{\sharp} : A \to B$ 
(or just $f^{\natural} \dashv f^{\sharp}$, when it is clear from the context what other components of an adjunction are),
consists of a pair of 1-cells $f^{\sharp} : A \to B$ and $f^{\natural} : B \to A$ and a pair of 2-cells $\eta : 1_B \Rightarrow f^{\sharp} \circ f^{\natural}$
and $\varepsilon : f^{\natural} \circ f^{\sharp} \Rightarrow 1_A$ such that the following conditions are satisfied:
\[
\begin{tikzcd}
  A \ar[to=B1, "f^{\sharp}"'] \ar[rr, "1_{A}", ""{name=ida}] && |[alias=A]|A \ar[to=B2, "f^{\sharp}"] \\
                                                                      &&&& = & A \ar[rr, bend left, "f^{\sharp}", ""{name=U,inner sep=1pt,below}] \ar[rr, bend right, "f^{\sharp}"{below}, ""{name=D,inner sep=1pt}] && B \ar[Rightarrow, from=U, to=D, "1_{f^{\sharp}}"]\\
  & |[alias=B1]| B \ar[rr, "1_{B}"', ""{name=idb}] \ar[to=A, "f^{\natural}"'] && |[alias=B2]| B 
  \arrow[Rightarrow, from=B1, to=ida, shorten >=2em,shorten <=2em, "\varepsilon"]
  \arrow[Rightarrow, from=idb, to=A, shorten >=2em,shorten <=2em, "\eta"']
\end{tikzcd}
\]
\[
\begin{tikzcd}
  B \ar[to=A1, "f^{\natural}"']  \ar[rr, "1_B", ""{name=idb}] && |[alias=B]| B \ar[to=A2, "f^{\natural}"]\\
                                                                      &&&& = & B \ar[rr, bend left, "f^{\natural}", ""{name=U,inner sep=1pt,below}] \ar[rr, bend right, "f^{\natural}"{below}, ""{name=D,inner sep=1pt}] && A \ar[Rightarrow, from=U, to=D, "1_{f^{\natural}}"] \\
  & |[alias=A1]| A \ar[to=B, "f^{\sharp}"'] \ar[rr, "1_A"', ""{name=ida}] && |[alias=A2]| A
  \arrow[Rightarrow, from=idb, to=A1, shorten >=2em,shorten <=2em, "\eta"]
  \arrow[Rightarrow, from=B, to=ida, shorten >=2em,shorten <=2em, "\varepsilon"]
\end{tikzcd}
\]

We say that $f^{\natural}$ is \emph{left adjoint} to $f^{\sharp}$ and $f^{\sharp}$ is \emph{right adjoint} to $f^{\natural}$.
\end{definition}

In particular, a \emph{symmetric monoidal adjunction} can be defined as an adjunction in the 2-category ${\bf SMC}$.

\begin{definition}~\label{left:adj:def}
  Let $\mathcal{C}$ be a 2-category, let $A,B$ be 0-cells and let $\eta, \varepsilon: f^{\natural} \dashv f^{\sharp} : A \to B$ and
  $\eta', \varepsilon': {f'}^{\natural}\dashv {f'}^{\sharp} : A' \to B'$ be adjunctions in $\mathcal{C}$, then a \emph{left morphism of adjunctions} consists of
  a pair of 1-cells $g_1 : A \to A'$ and $g_2 : B \to B'$ such that $g_1 \circ f^{\natural} = {f'}^{\natural} \circ g_2$.
\end{definition}

Note that there are different non-equivalent definitions of an adjunction morphism. For example,
the definition of an adjunction morphism given in \cite[Exercise 4.2.v]{riehl2017category} extends our definition of left 
adjunction morphism with a similar extra equality of composites with right adjoints. 
The definition of an adjunction morphism from \cite[Definition 2.2.2]{zaganidis2017classifiers} contains only the equality
with right adjoints. So we coined the term ``left adjunction morphism'' to avoid further terminological confusion.

Let $\mathcal{C}, \mathcal{D}$ be 2-categories, then, by default, ${\bf Fun}(\mathcal{C}, \mathcal{D})$ means
the 2-category of strict 2-functors, their strict natural transformations and modifications.

Let $[1]$ denote a two-element category with two objects $0$ and $1$ and a single non-identity morphism $0 \to 1$.
\begin{definition}
Given a 2-category $\mathcal{C}$, a 2-category ${\bf Adj}^{l}(\mathcal{C})$
of adjunctions in $\mathcal{C}$ and their left morphisms is defined as 
the full 1,2-subcategory of ${\bf Fun}([1], \mathcal{C})$ spanned by 1-cells in $\mathcal{C}$ having a right adjoint.
\end{definition}

Comonads in 2-categories are defined by the dualisation of the corresponding definition of a monad in 2-categories from \cite{street1972formal}.
The reader can find the explicit presentation of the formal theory of comonads in \cite[§2.3]{zwanziger2024natural}.
\begin{definition}~\label{2-comonad}
  Let $\mathcal{C}$ be a 2-category, a \emph{comonad} in $\mathcal{C}$ consists of a 0-cell $A \in \operatorname{Ob}(\mathcal{C})$ along with
  a 1-cell $k : A \to A$ and 2-cells $\delta : k \Rightarrow k \circ k$ (\emph{comultiplication}) and $\varepsilon : k \Rightarrow 1_A$
  (\emph{counit}) such that the following diagrams commute:
  \[
  \begin{tikzcd}
    & k \ar[ddl, "1_k"'] \ar[ddr, "1_k"] \ar[dd, "\delta"] && k \ar[rr, "\delta"] \ar[dd, "\delta"'] && k k \ar[dd, "k \delta"] \\
    \\
  k & k k \ar[l, "k\varepsilon"] \ar[r, "\varepsilon k"'] & k      &  k k \ar[rr, "\delta k"'] && k k  k 
  \end{tikzcd}
  \]
\end{definition}

Most of the time, we will identify a comonad $(k, \delta, \varepsilon)$ over $A \in \operatorname{Ob}(\mathcal{C})$
with the underlying 1-cell $k$. A usual (co)monad over a category $\mathcal{C}$ is a (co)monad in the 2-category ${\bf Cat}$
of categories, functors and natural transformations. It is well known
that every adjunction in a 2-category gives a comonad. Indeed, let
$\eta, \varepsilon: f^{\natural} \dashv f^{\sharp} : A \to B$ be an adjunction in $\mathcal{C}$,
then the composite $f^{\natural} \circ f^{\sharp} : A \to A$ forms a comonad on $A$
with the counit 2-cell $\varepsilon : f^{\natural} \circ f^{\sharp} \Rightarrow 1_A$
and the comultiplication 2-cell $f^{\natural} \eta f^{\sharp} : f^{\natural} \circ f^{\sharp} \Rightarrow f^{\natural} \circ f^{\sharp} \circ f^{\natural} \circ f^{\sharp}$.

Let $\mathcal{C}$ be a 2-category, then one can describe the 2-category of comonads in $\mathcal{C}$, that is, comonads as 0-cells, comonad morphisms as 1-cells
and comonad morphism transformations as 2-cells.
The reader can find more details describing the category of monads in \cite[§8.3]{lack20092} or \cite{street1972formal} 
and apply them to comonads by duality. 

\begin{definition}~\label{comonad:morph}
\begin{itemize}
\item Let $(A, k)$ and $(B, l)$ be comonads in a 2-category $\mathcal{C}$, a \emph{comonad morphism}
$(f, \mu) : (A, k) \to (B, l)$ consists of a 1-cell $f : A \to B$ and a 2-cell $\mu : f k \Rightarrow l f$
such that the following conditions are satisfied:
\[
\begin{tikzcd}
  f k \ar[rr, "\mu"] \ar[d, "f \delta"'] && l  f \ar[d, "\delta f"]                & f k \ar[rr, "\mu"] \ar[dr, "f \varepsilon"'] && l f \ar[dl, "\varepsilon  f"] \\
  f k  k \ar[r, "\mu k"'] & l f k \ar[r, "l \mu"'] & l l f & & f 
\end{tikzcd}
\]

\item Let $(A, k_1)$ and $(B, k_2)$ be comonads in a 2-category $\mathcal{C}$
  and let $(f_1, \mu_1), (f_2, \mu_2) : (A, k_1) \to (B, k_2)$ be comonad morphisms, 
  a \emph{comonad morphism transformation} is a 2-cell $\varphi : f_1 \Rightarrow f_2$ such that the following square commutes:
  \[
  \begin{tikzcd}
    f_1 k_1 \ar[d, "\mu_1"'] \ar[rr, "\varphi k_1"] && f_2 k_1 \ar[d, "\mu_2"] \\
    k_2 f_1 \ar[rr, "k_2 \varphi"'] && k_2 f_2
  \end{tikzcd}
  \]
\end{itemize}

The 2-category ${\bf coMnd}(\mathcal{C})$ consists of all comonads in a 2-category $\mathcal{C}$,
their morphisms and morphism transformations.
\end{definition}

Turning to the example with ${\bf SMC}$, a \emph{symmetric lax monoidal comonad morphism} of symmetric lax monoidal comonads is a comonad morphism in ${\bf coMnd}({\bf SMC})$. One could also
define this concept explicitly as a comonad morphism respecting the structures of a symmetric lax monoidal functor.

Generally, we characterise ${\bf coMnd}(\mathcal{C})$
with the the 2-category ${\bf Fun}_{oplax}(W_{cmd}, \mathcal{C})$ of strict 2-functors, \emph{oplax} natural transformations and their modifications,
where $W_{cmd}$ (aka the ``walking comonad'') is a 2-category with a single object $\bullet$. 
1-cells are generated by the arrow $k : \bullet \to \bullet$, 2-cells are generated by
$\varepsilon : k \Rightarrow {\bf id}_{\bullet}$ and $\delta : k \Rightarrow k \circ k$
along with the axioms from Definition~\ref{2-comonad}.

The next step is to an alternative characterisation of ${\bf coMnd}(\mathcal{C})$ with the concept of a left adjunction morphism. For that, let us recall the 2-categorical notion of a mate.
Let $\eta, \varepsilon: f^{\natural} \dashv f^{\sharp} : A \to B$ and $\eta', \varepsilon': {f'}^{\natural} \dashv {f'}^{\sharp} : A' \to B'$ be adjunctions in $\mathcal{C}$.
Let $g_1 : A \to A'$ and $g_2 : B \to B'$, then there is a bijection between 2-cells 
${f'}^{\natural} \circ g_2 \Rightarrow g_1 \circ f^{\natural}$ and $g_2 \circ f^{\sharp} \Rightarrow {f'}^{\sharp} \circ g_1$, so squares of the form
\[
\begin{tikzcd}
  A \ar[rr, "f^{\sharp}", ""{name=fnat}] \ar[dd, "g_1"', ""{name=g1}] && |[alias=B]| B \ar[dd, "g_2", ""{name=g2}] \\
  \\
  |[alias=A']| A' \ar[rr, "{f'}^{\sharp}"',  ""{name=fnat'}] && B'
  \arrow[from=fnat, to=fnat', Rightarrow, shorten=6mm]
\end{tikzcd}
\]
correspond one-to-one to squares of the form
\[
\begin{tikzcd}
  A \ar[dd, "g_1"', ""{name=g1}] && |[alias=B]| B \ar[dd, "g_2", ""{name=g2}] \ar[ll, "f^{\natural}"', ""{name=fsharp}] \\
  \\
  |[alias=A']| A' && B' \ar[ll, "{f'}^{\natural}", ""{name=fsharp'}]
  \arrow[from=fsharp, to=fsharp', Rightarrow, shorten=6mm]
\end{tikzcd}
\]
So those pairs of 2-cells are called \emph{mates}, see \cite[§2]{lack20092} and \cite[§2]{kelly2006review}.

\begin{prop}~\label{2:comonad:morphish:prop} Let $\mathcal{C}$ be a 2-category, and let $\eta, \varepsilon: f^{\natural} \dashv f^{\sharp} : A \to B$ and
  $\eta', \varepsilon': {f'}^{\natural} \dashv {f'}^{\sharp} : A' \to B'$ be adjunctions in $\mathcal{C}$
  and let $g_1 : A \to A'$ and $g_2 : B \to B'$ be 1-cells such that a pair $(g_1, g_2)$ is a left morphism
  of the corresponding adjunctions. Then $(g_1, g_2)$ induces a morphism of comonads $f^{\natural} \circ f^{\sharp} \Rightarrow {f'}^{\natural} \circ {f'}^{\sharp}$.
\end{prop}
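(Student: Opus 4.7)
My plan is to construct the comparison 2-cell $\mu \colon g_1 \circ f_l \circ f_r \Rightarrow f'_l \circ f'_r \circ g_1$ as the mate of the identity 2-cell on the shared 1-cell $g_1 \circ f_l = f'_l \circ g_2$, and then to verify the two axioms of Definition~\ref{comonad:morph} by reducing them to the triangle identities of the two given adjunctions. Concretely, I view the equation $g_1 \circ f_l = f'_l \circ g_2$ as the identity 2-cell in the left-adjoint square, and its mate under the bijection recalled just before the proposition is a 2-cell $\beta \colon g_2 \circ f_r \Rightarrow f'_r \circ g_1$ with the explicit presentation
\[
\beta := (f'_r g_1 \varepsilon) \cdot (\eta' g_2 f_r),
\]
where juxtaposition denotes whiskering and $\cdot$ vertical composition (the intermediate 1-cell $f'_r f'_l g_2 f_r$ equals $f'_r g_1 f_l f_r$ by hypothesis, so the composition is well-typed). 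Whiskering $\beta$ by $f'_l$ on the left and again using $f'_l g_2 = g_1 f_l$ yields the desired $\mu := f'_l \beta \colon g_1 f_l f_r \Rightarrow f'_l f'_r g_1$.

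For the counit axiom $(\varepsilon' g_1) \cdot \mu = g_1 \varepsilon$, I would expand $\mu$ and apply middle-four interchange to rewrite $(\varepsilon' g_1) \cdot (f'_l f'_r g_1 \varepsilon)$ as $(g_1 \varepsilon) \cdot (\varepsilon' g_1 f_l f_r)$; the leftover factor $(\varepsilon' g_1 f_l f_r) \cdot (f'_l \eta' g_2 f_r)$ rewrites, after substituting $g_1 f_l = f'_l g_2$, as $\bigl((\varepsilon' f'_l) \cdot (f'_l \eta')\bigr) g_2 f_r$, which collapses to the identity by the triangle identity for $f'_l \dashv f'_r$. The comultiplication axiom proceeds by a parallel but longer diagram chase: after expanding $\mu$ on both sides and inserting the comonad comultiplications $f_l \eta f_r$ and $f'_l \eta' f'_r$, both sides of the required equation consist of whiskered strings built from four copies of the units and counits, and repeated application of interchange together with the triangle identities for both $f_l \dashv f_r$ and $f'_l \dashv f'_r$ brings them to a common normal form.

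The main obstacle I anticipate is the bookkeeping required for the comultiplication axiom, since the whiskerings nest several layers deep and each interchange step must be made in the correct place. Conceptually, however, the underlying principle is transparent: the mate correspondence is compatible with composition of adjunctions, so passing from an adjunction to its induced comonad is a 2-functorial assignment, and it therefore automatically sends left morphisms of adjunctions to morphisms of comonads, which is exactly the content of the proposition.
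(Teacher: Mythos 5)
Your construction is exactly the paper's: the comonad morphism is $(g_1, f'_l\beta)$ where $\beta = (f'_r g_1 \varepsilon)\cdot(\eta' g_2 f_r)$ is the mate of the identity square $g_1 f_l = f'_l g_2$, whiskered on the left by $f'_l$. The paper defers the axiom checks to a cited reference (Zaganidis, Prop.\ 2.2.4), whereas you sketch them directly via interchange and the triangle identities; your counit verification is correct and the comultiplication chase goes through as you describe, so this is essentially the same proof with slightly more detail.
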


\begin{proof}
  The proof is similar to \cite[Proposition 2.2.4]{zaganidis2017classifiers},
  but let us just demonstrate how exactly a comonad morphism is given.
  Let $\beta : g_2 f^{\sharp} \Rightarrow {f'}^{\sharp} g_1$ be the mate of the identity $g_1 f^{\natural} = {f'}^{\natural}  g_2$, that is,
  the following composite:
  \[
  \begin{tikzcd}
  A \ar[rr, "f^{\sharp}"] \ar[ddrr, "1_A"', ""{name=id1}]  && |[alias=B]| B \ar[rr, "g_2"] \ar[dd, "f^{\natural}"] && |[alias=B']| B' \ar[dd, "{f'}^{\natural}"] \ar[ddrr, "1_{B'}", ""{name=id2}] \\
  \\
  && |[alias=A]| A \ar[rr, "g_1"'] && |[alias=A']| A' \ar[rr, "{f'}^{\sharp}"'] && B'
  \arrow[from=B, to=id1, Rightarrow, shorten=2mm, "\varepsilon"]
  \arrow[from=id2, to=A', Rightarrow, shorten=2mm, "\eta'"]
  \arrow[from=A, to=B', equal, shorten=1mm]
  \end{tikzcd}
  \]

  Then a pair $(g_1, {f'}^{\natural} \beta)$ is a comonad morphism $(A, f^{\natural} f^{\sharp}) \to (A', {f'}^{\natural} {f'}^{\sharp})$.
\end{proof}

\begin{prop}~\label{2:cell:adj:comonad}
  Let $\eta, \varepsilon: f^{\natural} \dashv f^{\sharp}$ and $\eta', \varepsilon': {f'}^{\natural} \dashv {f'}^{\sharp}$ 
  be adjunctions between $A$ and $B$ and $A'$ and $B'$ respectively in a 2-category $\mathcal{C}$. Consider the induced 
  comonads $f^{\natural} f^{\sharp}$ and $f'^{\natural} f'^{\sharp}$ on $A$ and $A'$ respectively. Let $(g_1, g_2)$ and $(g'_1, g'_2)$
  be left adjunction morphisms from $f^{\natural} \dashv f^{\sharp}$ to ${f'}^{\natural}\dashv f'^{\sharp}$ 
  inducing comonad morphisms $(g_1, \mu) : f^{\natural} f^{\sharp} \Rightarrow {f'}^{\natural} {f'}^{\sharp}$ and $(g'_1, \mu') : f^{\natural} f^{\sharp} \Rightarrow {f'}^{\natural} {f'}^{\sharp}$ respectively and 
  let $\alpha_1 : g_1 \Rightarrow g'_1$ and $\alpha_2 : g_2 \Rightarrow g'_2$ be 2-cells in $\mathcal{C}$ such that
  $\alpha_1 f^{\natural} = {f'}^{\natural} \alpha_2$. Then $\alpha_1$ and $\alpha_2$ induce a comonad morphism transformation
  \[
  \begin{tikzcd}
    f^{\natural} f^{\sharp} \ar[rrr, bend left=15, "{(g_1, \mu)}", ""{name=mu1}] \ar[rrr, bend right=15, "{(g'_1, \mu')}"', ""{name=mu2}] &&& {f'}^{\natural} {f'}^{\sharp}
    \arrow[from=mu1, to=mu2, Rightarrow]
  \end{tikzcd}
  \]
\end{prop}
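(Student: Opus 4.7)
The plan is to exhibit $\alpha_1$ itself as the desired comonad morphism transformation from $(g_1,\mu_1)$ to $(g'_1,\mu_2)$, and then to verify the single naturality square required by Definition~\ref{comon:morph:trans} by a straightforward application of the naturality of the mate correspondence.

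First I would recall the explicit recipe from the proof of Proposition~\ref{2:comonad:morphish:prop}: the comonad morphism associated with the left adjunction morphism $(g_1, g_2)$ is $\mu_1 = f'_l \beta_1$, where $\beta_1 : g_2 f_r \Rightarrow f'_r g_1$ is the mate of the identity $g_1 f_l = f'_l g_2$ under the adjunctions $f_l \dashv f_r$ and $f'_l \dashv f'_r$, and analogously $\mu_2 = f'_l \beta_2$ for $\beta_2 : g'_2 f_r \Rightarrow f'_r g'_1$. So unfolding Definition~\ref{comon:morph:trans}, the square I must verify is
\[ (f'_l f'_r)\alpha_1 \circ f'_l \beta_1 \;=\; f'_l \beta_2 \circ \alpha_1 (f_l f_r). \]

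Next I would invoke the standard naturality of the mate correspondence (see \cite[§2]{kelly2006review} and \cite[§2]{lack20092}): given the pair of 2-cells $\alpha_1 : g_1 \Rightarrow g'_1$ and $\alpha_2 : g_2 \Rightarrow g'_2$ satisfying the hypothesis $\alpha_1 f_l = f'_l \alpha_2$, the transported compatibility
\[ f'_r \alpha_1 \circ \beta_1 \;=\; \beta_2 \circ \alpha_2 f_r \]
holds between the mates. This is a direct pasting-diagram calculation using the triangle identities of both adjunctions, the interchange law, and the definition of $\beta_i$ as the composite $(f'_r g_1 \varepsilon) \circ (f'_r \, {=} \, f_r) \circ (\eta' g_2 f_r)$ (and its primed version). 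Whiskering this identity on the left by $f'_l$ yields $f'_l f'_r \alpha_1 \circ f'_l \beta_1 = f'_l \beta_2 \circ f'_l \alpha_2 f_r$, and using the hypothesis $\alpha_1 f_l = f'_l \alpha_2$ post-whiskered by $f_r$ rewrites the last factor as $\alpha_1 f_l f_r$, giving exactly the required identity.

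The main obstacle I anticipate is not conceptual but notational: carefully unfolding the composite that defines $\beta_i$ in the proof of Proposition~\ref{2:comonad:morphish:prop} and checking the mate-naturality equation cell-by-cell is tedious because three distinct whiskerings (by $f'_l$, by $f_r$, and by $g_1, g'_1$) intertwine, and the identification $g_1 f_l = f'_l g_2$ must be applied at the correct place to make the sources and targets typecheck. Once the mate-naturality equation is in hand, however, the remainder of the proof is a pure whiskering manipulation. No further axioms of a comonad morphism transformation need to be checked beyond the naturality square in Definition~\ref{comon:morph:trans}, so the argument terminates at that point.
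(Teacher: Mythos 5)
Your proposal is correct and follows essentially the same route as the paper: both reduce the required square for the comonad morphism transformation to the whiskering by $f'_l$ of the mate-compatibility identity $f'_r\alpha_1\circ\beta_1=\beta_2\circ\alpha_2 f_r$, combined with the hypothesis $\alpha_1 f_l=f'_l\alpha_2$ post-whiskered by $f_r$ to identify $f'_l\alpha_2 f_r$ with $\alpha_1 f_l f_r$. The only cosmetic difference is that the paper verifies the mate compatibility by an explicit pasting diagram rather than citing it as the standard naturality of the mate correspondence, which is exactly the calculation you flag as the remaining bookkeeping.
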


\begin{proof}
  Let us show that the following square of 2-cells commutes:
  \begin{equation}~\label{goal:adj}
  \begin{tikzcd}
    g_1 f^{\natural} f^{\sharp} \ar[rr, "\alpha_1 f^{\natural} f^{\sharp}"] \ar[d, "\mu"'] && g'_1 f^{\natural} f^{\sharp} \ar[d, "\mu'"]\\
    {f'}^{\natural} {f'}^{\sharp} g_1 \ar[rr, "{f'}^{\natural} {f'}^{\sharp} \alpha_1"'] && {f'}^{\natural} {f'}^{\sharp} g'_1
  \end{tikzcd}
  \end{equation}
  By Proposition~\ref{2:comonad:morphish:prop}, we can unveil $\mu$ and $\mu'$ as
  ${f'}^{\natural} \beta : {f'}^{\natural} g_2 f^{\sharp} \Rightarrow {f'}^{\natural} {f'}^{\sharp} g_1$ and  
  ${f'}^{\natural} \beta' : {f'}^{\natural} g'_2 f^{\sharp} \Rightarrow {f'}^{\natural} {f'}^{\sharp} g'_1$. So let us rewrite~(\ref{goal:adj})
  as
  \begin{equation}~\label{goal:adj:2}
    \begin{tikzcd}
    g_1 f^{\natural} f^{\sharp} \ar[rrr, "\alpha_1 f^{\natural} f^{\sharp}"] \ar[d, equal]  &&& g'_1 f^{\natural} f^{\sharp} \ar[d, equal] \\
    {f'}^{\natural} g_2 f^{\sharp} \ar[d, "{f'}^{\natural} \beta"'] \ar[rrr, "{f'}^{\natural} \alpha_2 f^{\sharp}"] &&& {f'}^{\natural} g'_2 f^{\sharp} \ar[d, "{f'}^{\natural} \beta'"] \\
    {f'}^{\natural} {f'}^{\sharp} g_1 \ar[rrr, "{f'}^{\natural} {f'}^{\sharp} \alpha_1"'] &&& {f'}^{\natural} {f'}^{\sharp} g'_1
    \end{tikzcd}
  \end{equation}
  Observe that the top square commutes by $\alpha_1 f^{\natural} = {f'}^{\natural} \alpha_2$. The bottom one is unveiled as follows and it holds 
  by applying the assumption of the proposition as well:
  \[
  \begin{tikzcd}
    |[alias=A1]| A & |[alias=B1]| B & |[alias=B1']| B'     &&&   |[alias=A2]| A & |[alias=B2]| B & |[alias=B2']| B' \\
    & |[alias=A3]| A & |[alias=A'1]| A' & |[alias=B3']| B' & = & |[alias=A4]| A & |[alias=B3]| B & |[alias=B4']| B' \\
    & |[alias=A5]| A & |[alias=A'2]| A' & |[alias=B5']| B' &&& |[alias=A6]| A & |[alias=A'3]| A' & |[alias=B6']| B'
    \ar[from=A3, to=A5, equal]
    \ar[from=A2, to=A4, equal]
    \ar[from=A'1, to=A'2, equal]
    \ar[from=B3', to=B5', equal]
    \ar[from=B2, to=B3, equal]
    \ar[from=B2', to=B4', equal]
    \ar[from=A3, to=A'1, "g_1", ""{name=g1}]
    \ar[from=A5, to=A'2, "g'_1"', ""{name=g1'}]
    \ar[from=g1, to=g1', Rightarrow, shorten=3mm, "\alpha_1"]
    \ar[from=A'1, to=B3', "{f'}^{\sharp}"']
    \ar[from=A'2, to=B5', "{f'}^{\sharp}"']
    \ar[from=A1, to=A3, "1_A"', ""{name=1A}]
    \ar[from=B1, to=1A, Rightarrow, "\varepsilon"]
    \ar[from=B1, to=A3, "f^{\natural}"]
    \ar[from=B1', to=A'1, "{f'}^{\natural}"']
    \ar[from=A1, to=B1, "f^{\sharp}"]
    \ar[from=B1, to=B1', "g_2"]
    \ar[from=B1', to=B3', "1_{B'}", ""{name=1B'}]
    \ar[from=1B', to=A'1, Rightarrow, "\eta'"]
    \ar[from=A2, to=B2, "f^{\sharp}"]
    \ar[from=A4, to=B3, "f^{\sharp}"]
    \ar[from=B2, to=B2', "g_2", ""{name=g2}]
    \ar[from=B3, to=B4', "g_2'", ""{name=g2'}]
    \ar[from=g2, to=g2', Rightarrow, shorten=3mm, "\alpha_2"]
    \ar[from=A4, to=A6, "1_A"', ""{name=1A'}]
    \ar[from=A4, to=B3, "f^{\sharp}"]
    \ar[from=B3,to=A6, "f^{\natural}"]
    \ar[from=B3, to=1A', Rightarrow, "\varepsilon"]
    \ar[from=A6, to=A'3, "g'_1"]
    \ar[from=B4',to=A'3, "{f'}^{\natural}"']
    \ar[from=A'3,to=B6', "{f'}^{\sharp}"']
    \ar[from=B4', to=B6', "1_{B'}", ""{name=1B''}]
    \ar[from=1B'', to=A'3, Rightarrow, "\eta'"]
  \end{tikzcd}
  \]
\end{proof}

So we have a 2-functor from ${\bf Adj}^l(\mathcal{C})$ to ${\bf coMnd}(\mathcal{C})$ by 
Proposition~\ref{2:comonad:morphish:prop} and Proposition~\ref{2:cell:adj:comonad}.
But there is the other way round 2-functor as well, but for 2-categories admitting the construction of coalgebras,
so we need a couple of more definitions.

We assume that the classical notion of a coalgebra over a comonad is already known to the reader,
but we refer to \cite[Chapter 3]{barr2000toposes} to have an introduction to algebras over a monad, which is applicable
to comonads and coalgebras by duality.

\begin{definition}
  Let $\mathcal{C}$ be a 2-category and let $(A, k)$ be a comonad in $\mathcal{C}$. An \emph{object of coalgebras} (also 
  called a \emph{co-Eilenberg-Moore object}) is a 0-cell $A^k \in \Ob{C}$ (if it exists) such that for each
  0-cell $B$:
  \begin{equation}~\label{coalg:iso}
    \mathcal{C}(B, A^k) \cong {\bf coMnd}(\mathcal{C})((B, 1_B), (A, k)).
  \end{equation}

  $\mathcal{C}$ \emph{admits the construction of coalgebras} if every comonad has an object of coalgebras.
\end{definition}

Fix a comonad $(A, k)$ in a 2-category $\mathcal{C}$ with an object of coalgebras $A^k$. We have a comonad morphism
$(k, \delta) : (A, 1_A) \to (A, k)$, so we obtain \emph{the cofree morphism} $v : A \to A^k$, a 1-cell in $\mathcal{C}$,
by transposing $(k, \delta)$ through~(\ref{coalg:iso}). \emph{The coforgetful morphism} $u : A^k \to A$ is obtained by transposing the identity
morphism $1_{A^k} : A^k \to A^k$. Thus we have an adjunction $u \dashv v : A \to A^k$ in $\mathcal{C}$ which is called
the \emph{cofree-coforgetful decomposition}. Thus if $\mathcal{C}$ admits the construction of coalgebras,
then every comonad has cofree-coforgetful decomposition. The important observation is that ${\bf SMC}$ is such a 2-category.

Therefore, we have (the complete proof can be adapted from \cite[Theorem 2]{street1972formal}):

\begin{theorem}~\label{coalgebra:theorem} 
  Let $\mathcal{C}$ be a 2-category that admits the construction of coalgebras, then
${\bf coMnd}(\mathcal{C})$ fully faithfully embeds into ${\bf Adj}^l(\mathcal{C})$.
\end{theorem}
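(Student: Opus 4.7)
The plan is to construct a 2-functor $\Phi \colon \mathbf{coMnd}(\mathcal{C}) \to \mathbf{Adj}^l(\mathcal{C})$ sending each comonad to its cofree--coforgetful adjunction, and then to use the universal property~(\ref{coalg:iso}) to verify that $\Phi$ is fully faithful on hom-categories. On 0-cells I would set $\Phi(A, k) = (u \dashv v \colon A \to A^k)$, which exists by the hypothesis that $\mathcal{C}$ admits the construction of coalgebras.

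For the 1-cell action I would proceed as follows. Given a comonad morphism $(f, \mu) \colon (A, k) \to (B, l)$, the coforgetful 1-cell $u_A \colon A^k \to A$ carries a canonical coalgebra structure, that is, a comonad morphism $(u_A, \tau_A) \colon (A^k, 1_{A^k}) \to (A, k)$ obtained by transposing the identity $1_{A^k}$ through~(\ref{coalg:iso}). Composing in $\mathbf{coMnd}(\mathcal{C})$ with $(f, \mu)$ yields a comonad morphism $(f \circ u_A,\; (\mu u_A) \cdot (f \tau_A)) \colon (A^k, 1_{A^k}) \to (B, l)$; transposing back through~(\ref{coalg:iso}) produces a 1-cell $\hat f \colon A^k \to B^l$ satisfying $u_B \circ \hat f = f \circ u_A$. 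This is precisely the data of a left adjunction morphism in the sense of Definition~\ref{left:adj:def}, so I set $\Phi(f, \mu) = (f, \hat f)$. For 2-cells, a comonad morphism transformation $\varphi \colon (f_1, \mu_1) \Rightarrow (f_2, \mu_2)$ whiskered with $(u_A, \tau_A)$ gives a 2-cell between the corresponding coalgebra-valued comonad morphisms, and the universal property upgraded to 2-cells yields a unique $\hat\varphi \colon \hat f_1 \Rightarrow \hat f_2$ with $u_B \hat\varphi = \varphi u_A$. Functoriality of $\Phi$ in both cells follows from the uniqueness clauses of the transposition.

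To show $\Phi$ is fully faithful, I would invoke Proposition~\ref{2:comonad:morphish:prop} and Proposition~\ref{2:cell:adj:comonad} to build a candidate inverse on hom-categories: a left adjunction morphism $(g_1, g_2)$ between cofree--coforgetful decompositions induces, via the mate $\beta \colon g_2 v_A \Rightarrow v_B g_1$, a comonad morphism $(g_1, u_B \beta) \colon (A, uv) \to (B, u'v')$, and a 2-cell between such morphisms satisfying the left-adjunction square induces a comonad morphism transformation. I would then check that the two assignments are mutually inverse. In one direction, applying $\Phi$ to a comonad morphism $(f, \mu)$ and then taking mates reconstructs $\mu$ because the mate of $u_B \hat f = f u_A$ under the cofree--coforgetful adjunctions evaluates, after precomposition with the coalgebra structure of $u_A$, to $\mu$ by the triangle identities. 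In the other direction, starting from $(g_1, g_2)$, the lift of the induced comonad morphism agrees with $g_2$ by uniqueness in~(\ref{coalg:iso}), since both 1-cells $A^k \to B^l$ transpose to the same comonad morphism out of $(A^k, 1_{A^k})$. The 2-cell case is analogous and uses the bijective mate correspondence recalled before Proposition~\ref{2:comonad:morphish:prop}.

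The principal obstacle is the bookkeeping in the last step: one must carefully track how the mate correspondence interacts with the universal property of $A^k$, and in particular verify that the induced coalgebra structure on $f u_A$ genuinely matches the one coming from a left adjunction morphism, rather than merely being compatible with it up to a 2-cell. Once this compatibility is established via the triangle identities of the two adjunctions $u_A \dashv v_A$ and $u_B \dashv v_B$, together with the comonad morphism axioms of Definition~\ref{comonad:morph}, the fully faithful embedding follows, and the construction is essentially the coalgebraic dual of Street's Theorem 2 in \cite{street1972formal}.
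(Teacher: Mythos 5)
Your proposal is correct and follows exactly the route the paper intends: the paper gives no written proof of Theorem~\ref{coalgebra:theorem}, deferring instead to the dual of Street's Theorem~2 in \cite{street1972formal}, and your argument --- lifting a comonad morphism through the universal property~(\ref{coalg:iso}) of the object of coalgebras to get the left adjunction morphism, and inverting via the mate construction of Proposition~\ref{2:comonad:morphish:prop} and Proposition~\ref{2:cell:adj:comonad} --- is precisely that adaptation spelled out. The bookkeeping you flag (checking the mate of $u_B \hat f = f u_A$ recovers $\mu$ via the triangle identities, and that $\hat{g_1} = g_2$ by uniqueness of transposes) is indeed where the remaining work lies, and it goes through as in Street.
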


Let us instantiate the above theorem for the 2-category ${\bf SMC}$ explicitly:
\begin{prop}~\label{coalgebra:theorem:pr}
  ${\bf coMnd}({\bf SMC})$, the 2-category of symmetric lax monoidal comonads over SMCs, their symmetric lax monoidal morphisms and transformations,
  fully faithfully embeds into ${\bf Adj}^{l}({\bf SMC})$, the 2-category of symmetric monoidal adjunctions in ${\bf SMC}$,
  their left morphisms and modifications.
\end{prop}

\subsection{On Cosemigroups, Comonoids and related concepts}

We also recall underlying definitions and facts related to symmetric monoidal categories and comonads over them. 
We assume that the reader is already familiar with the concepts of a symmetric (closed) monoidal category,
(lax, strong, strict) symmetric monoidal functors. Otherwise, we refer to \cite[Chapter 2]{etingof2015tensor} and \cite[Part I]{aguiar2010monoidal}.

Let us recall explicitly the concepts of comonoid and cosemigroup objects to keep the paper more self-contained as we will be addressing
them quite often. Cosemigroups are less studied, but they are of some interest in the study of Banach algebras, see \cite{poinsot2022multipliers}. This
definition will be of particular interest for studying relevant subexponentials, that is, those ones that admit at least the contraction rule.

\begin{definition} Let $(\mathcal{C}, \otimes, \mathds{1})$ be a monoidal category, then
  \begin{itemize}
  \item A \emph{cosemigroup object} in $\mathcal{C}$ is an object
  $A \in \Ob{C}$ with a choice of a morphism $\gamma_A : A \to A \otimes A$ called \emph{copying} such that the following is satisfied:
  \[
  \begin{tikzcd}
  A \ar[rrr, "\gamma_A"] \ar[d, "\gamma_A"']                                  &&& A \otimes A \ar[d, "1_A \otimes \gamma_A"] \\
  A \otimes A \ar[rrr, "\alpha_{A,A,A} \circ (\gamma_A \otimes 1_A)"'] &&& A \otimes A \otimes A
  \end{tikzcd}
  \]

\item A \emph{morphism of cosemigroups} $f : (A, \gamma_A) \to (B, \gamma_B)$ is an arrow $f : A \to B$ in $\mathcal{C}$ making the below square commute:
\[
\begin{tikzcd}
A \ar[r, "f"] \ar[d, "\gamma_A"'] & B \ar[d, "\gamma_B"] \\
A \otimes A \ar[r, "f \otimes f"'] & B \otimes B
\end{tikzcd}
\]

\item Let $(\mathcal{C}, \otimes, \mathds{1})$ be an SMC, then a cosemigroup $(A, \gamma_A)$ is \emph{cocommutative} if
the extra axiom is satisfied:
\[
\begin{tikzcd}
  & A \ar[dl, "\gamma_A"'] \ar[dr, "\gamma_A"] \\
  A \otimes A  \ar[rr, "\sigma_{A,A}"'] && A \otimes A 
\end{tikzcd}
\]
${\bf coSem}(\mathcal{C})$ is the category of cocommutative cosemigroups in $\mathcal{C}$ and their morphisms.
\end{itemize}
\end{definition}

\begin{definition}
A \emph{comonoid} is a cosemigroup object $(A, \gamma)$ along with a choice of a morphism $\iota_A : A \to \mathds{1}$ (\emph{counit})
commuting with formal diagonals in the following way:
\[
\begin{tikzcd}
  & A \ar[d, "\gamma_A"] \ar[dl, "\lambda_A^{-1}"'] \ar[dr, , "\rho_A^{-1}"] \\
\mathds{1} \otimes A & A \otimes A \ar[l, "\iota_A \otimes 1_A"] \ar[r, "1_A \otimes \iota_A"'] & A \otimes \mathds{1}
\end{tikzcd}
\]

A \emph{comonoid morphism} is a cosemigroup morphism $f : (A, \gamma_A) \to (B, \gamma_B)$ preserving the counit:
\[
\begin{tikzcd}
  A \ar[rr, "f"] \ar[dr, "\iota_A"'] && B \ar[dl, "\iota_B"] \\
  & \mathds{1}
\end{tikzcd}
\]

A comonoid is \emph{cocommutative} if its copying operation is commutative.
${\bf coMon}(\mathcal{C})$ denotes the category of cocommutative comonoids and their morphisms.
\end{definition}

The following statement is folklore:
\begin{theorem}~\label{cosemi:monoidal}
  Let $(\mathcal{C}, \otimes, \mathds{1})$ be a symmetric monoidal category, then
  the categories of cocommutative cosemigroups and comonoids ${\bf coSem}(\mathcal{C})$ and ${\bf coMon}(\mathcal{C})$
  are symmetric monoidal with the following structure:
  \begin{itemize}
    \item The tensor product of cosemigroups $(A, \gamma_A)$ and $(B, \gamma_B)$ is the cosemigroup $(A \otimes B, \gamma_{A \otimes B})$
    where $\gamma_{A \otimes B} : A \otimes B \to (A \otimes B) \otimes (A \otimes B)$ is given by the composite:
    \[
    \begin{tikzcd}
    A \otimes B \ar[rr, "\gamma_{A \otimes B}"] \ar[d, "\gamma_A \otimes \gamma_B"'] && (A \otimes B) \otimes (A \otimes B) \\
    A \otimes A  \otimes B \otimes B \ar[d, "\alpha_{A, A, B \otimes B}"'] && A \otimes (B \otimes (A \otimes B)) \ar[u, "\alpha^{-1}_{A, B, A \otimes B}"'] \\
    A \otimes (A \otimes B \otimes B) \ar[dr, "1_A \otimes \alpha^{-1}_{A, B, B}"'] && A \otimes ((B \otimes A) \otimes B) \ar[u, "1_A \otimes \alpha_{B, A, B}"'] \\
    & A \otimes ((A \otimes B) \otimes B) \ar[ur, "1_A \otimes (\sigma_{A, B} \otimes 1_B)"']
    \end{tikzcd}
    \]

    In ${\bf coMon}(\mathcal{C})$, the tensor product of comonoids $(A, \gamma_A, \iota_A)$ and $(B, \gamma_B, \iota_B)$ is
    the comonoid $(A \otimes B, \gamma_{A \otimes B}, \iota_{A \otimes B})$ where the counit $\iota_{A \otimes B}$
    is given by
    \[
    \begin{tikzcd}
    A \otimes B \ar[r, "\iota_A \otimes \iota_B"] & \mathds{1} \otimes \mathds{1} \ar[r, "\lambda = \rho"] & \mathds{1}.
    \end{tikzcd}
    \]
    where $\lambda$ and $\rho$ are unitors.
    \item The monoidal unit in ${\bf coSem}(\mathcal{C})$ is the cosemigroup
    $(\mathds{1}, \gamma_{\mathds{1}})$ where $\gamma_{\mathds{1}}$ is just $\rho^{-1}$ (the inverse of the unitor $\rho : \mathds{1} \otimes A \to A$). 
    In the category of comonoids,
    the cosemigroup $(\mathds{1}, \gamma_{\mathds{1}})$ is equipped with the identity map 
    $1_{\mathds{1}} : \mathds{1} \to \mathds{1}$ as the counit morphism.
  \end{itemize}
  Moreover, the forgetful functors ${\bf coSem}(\mathcal{C}) \to \mathcal{C}$
  and ${\bf coMon}(\mathcal{C}) \to \mathcal{C}$ are strict monoidal and symmetric.
\end{theorem}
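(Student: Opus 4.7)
The plan is to build the symmetric monoidal structure on ${\bf coSem}(\mathcal{C})$ (and then on ${\bf coMon}(\mathcal{C})$) by lifting everything from $\mathcal{C}$ along the forgetful functor $U : {\bf coSem}(\mathcal{C}) \to \mathcal{C}$, and then appeal to the fact that $U$ is faithful to derive the coherence axioms automatically. The first step is to verify that the prescribed comultiplication $\gamma_{A\otimes B}$ does turn $A \otimes B$ into a cocommutative cosemigroup. Coassociativity is a diagram chase where one rewrites the two composites $\gamma_{A\otimes B} \otimes 1$ and $1 \otimes \gamma_{A\otimes B}$, postcomposed with an associator, back in terms of $\gamma_A, \gamma_B$ and symmetries, and uses the coassociativity of $\gamma_A$ and $\gamma_B$ together with naturality of $\alpha$ and $\sigma$ and the hexagon axiom for $\sigma$ in $\mathcal{C}$. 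Cocommutativity of $\gamma_{A\otimes B}$ reduces, after a similar rearrangement, to the individual cocommutativity of $\gamma_A$ and $\gamma_B$ plus a coherence instance for $\sigma$. In the comonoid case one also checks that $\iota_{A\otimes B} = (\iota_A \otimes \iota_B) \circ \rho^{-1}$ (identifying $\lambda_{\mathds{1}} = \rho_{\mathds{1}}$) satisfies the counit axiom with respect to $\gamma_{A\otimes B}$; this is another naturality chase using the counit axioms for $A$ and $B$ and the triangle axiom.

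Next I would check that the assignment $(A,B) \mapsto (A \otimes B, \gamma_{A\otimes B})$ is functorial: given cosemigroup morphisms $f : A \to A'$ and $g : B \to B'$, the arrow $f \otimes g$ is a cosemigroup morphism because $\gamma_{A'\otimes B'} \circ (f \otimes g)$ equals $((f \otimes g) \otimes (f \otimes g)) \circ \gamma_{A\otimes B}$ by naturality of $\alpha, \sigma$ together with the two compatibility squares defining $f$ and $g$ as cosemigroup morphisms. For the monoidal unit I use $(\mathds{1}, \rho_{\mathds{1}}^{-1})$ (resp.\ with $\iota = 1_{\mathds{1}}$), whose cosemigroup axioms follow from the coherence relations $\rho_{\mathds{1}} = \lambda_{\mathds{1}}$ and the pentagon/triangle.

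The third block of verifications is that the structural isomorphisms $\alpha, \lambda, \rho, \sigma$ of $\mathcal{C}$ lift to morphisms of cosemigroups (and comonoids). Concretely, I have to check, for each of these, a square saying it commutes with the two cosemigroup comultiplications; each of these reduces, via the explicit formula for $\gamma_{A\otimes B}$, to a standard coherence identity in $\mathcal{C}$, so they are all instances of MacLane's coherence theorem. For the counits in ${\bf coMon}(\mathcal{C})$ the analogous checks for $\iota$ compatibility are immediate using $\lambda_{\mathds{1}} = \rho_{\mathds{1}}$. Once this is done, the forgetful functor $U$ is strict symmetric monoidal by construction, and in particular faithful.

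Finally, the pentagon, triangle and hexagon for the lifted structure on ${\bf coSem}(\mathcal{C})$ follow at no extra cost: both sides of each axiom are morphisms of cosemigroups, $U$ sends them to the two sides of the corresponding axiom in $\mathcal{C}$ (which agree), and $U$ is faithful, so they agree in ${\bf coSem}(\mathcal{C})$ as well. The same argument transports the axioms to ${\bf coMon}(\mathcal{C})$ using that the further forgetful functor ${\bf coMon}(\mathcal{C}) \to {\bf coSem}(\mathcal{C})$ is also faithful. The main obstacle is the cosemigroup axiom check for $\gamma_{A\otimes B}$: it is genuinely a multi-step coherence computation, but every step is either naturality of $\alpha$ or $\sigma$, an instance of the hexagon, or the axioms assumed on $\gamma_A$ and $\gamma_B$; once this is in place the rest of the theorem is a routine transport along the faithful forgetful functor.
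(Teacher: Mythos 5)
The paper does not actually prove Theorem~\ref{cosemi:monoidal}: it labels the statement ``rather well-known'' and moves on, deferring to the literature (e.g.\ the cited Aguiar--Mahajan and Porst references, where exactly this construction is carried out). So there is no in-paper argument to compare against; judged on its own, your proposal is the standard proof and is correct. The division of labour is right: the only genuinely non-formal content is the verification that $\gamma_{A\otimes B}$ is a cocommutative cosemigroup comultiplication, that $f\otimes g$ preserves it, and that $\alpha,\lambda,\rho,\sigma$ lift to cosemigroup (resp.\ comonoid) morphisms; after that, since a morphism of cosemigroups is literally a morphism of $\mathcal{C}$ satisfying a property, the forgetful functor is faithful and strict by construction, and the pentagon, triangle and hexagon are inherited for free, exactly as you say. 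One small point worth making explicit when you write the coassociativity chase out: coassociativity and counitality of $\gamma_{A\otimes B}$ need only the braided/symmetric coherence of $\mathcal{C}$ and the corresponding axioms for $\gamma_A,\gamma_B$ --- cocommutativity of the factors enters solely in the cocommutativity check for $\gamma_{A\otimes B}$, so your reduction is cleanly layered. (Also, your formula $\iota_{A\otimes B}=(\iota_A\otimes\iota_B)\circ\rho^{-1}$ is a slip in the order of composition; it should be the unitor \emph{post}composed with $\iota_A\otimes\iota_B$, as in the statement.)
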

The notions of a monoid and a semigroup are obtained by the dualisation of the above definitions,
see \cite{porst2008categories} for a more thorough introduction to categorical (co)monoids.
\section{Assemblages and Semantic Interpretation of ${\bf SILL}(\lambda)_{\Sigma}$}~\label{cocteau:categories:sem}

In this section, we construct the categorical denotational semantics for the type theory we introduced in the previous section.
We introduce $\Sigma$-assemblages to have an adequate semantic interpretation of ${\bf SILL}(\lambda)_{\Sigma}$. 

In categorical semantics of linear logic as it is studied in, for example, \cite{abramsky2011introduction} and \cite{mellies2009categorical},
one can consider so-called \emph{linear categories}, symmetric monoidal categories equipped with a symmetric lax
monoidal comonad that, informally speaking, assign a cocommutative comonoid to each object of the underlying category.
The heuristic is that we can associate the data type $\bang A$ with each $A$ such that elements of $\bang A$ can be 
both copied and destroyed. Such comonads allow interpreting the exponential modality semantically. 
We below suggest the notion of a $\Sigma$-assemblage expanding linear categories by equipping the underlying SMCC with a family of
symmetric lax monoidal comonads whose resource policies are described by what their indices belong to: $W$, $C$, both or neither of them.

\begin{definition}~\label{copy:del:def}
Let $\mathcal{C}$ be a symmetric monoidal category, then:
  \begin{itemize}
    \item An \emph{exponential} comonad is a symmetric lax monoidal comonad $(\mathsf{K}, \varepsilon, \delta)$ on $\mathcal{C}$ with a pair
    of symmetric monoidal natural transformations
    $\mathsf{c} : \mathsf{K} \Rightarrow  \mathsf{K}(.) \otimes \mathsf{K}(.)$ and $\mathsf{d} : \mathsf{K} \Rightarrow (X \mapsto \mathds{1})$ such that
    the following is satisfied:
    \begin{itemize}
      \item $(\mathsf{K} A, \mathsf{c}_A, \mathsf{d}_A)$ is a cocommutative comonoid for each $A \in \mathcal{C}$,
      \item $\delta_A$ is a comonoid morphism, i.e., the following axioms are satisfied:
      \[
      \begin{tikzcd}
        \mathsf{K} A \ar[rr, "\delta_A"] \ar[d, "\mathsf{c}_A"'] &&  \mathsf{K}^2 A \ar[d, "\mathsf{c}_{\mathsf{K} A}"']  & \mathsf{K} A \ar[r, "\delta_A"] \ar[dr, "\mathsf{d}_A"'] &  \mathsf{K}^2 A \ar[d, "\mathsf{d}_{\mathsf{K} A}"] \\
        \mathsf{K} A \otimes \mathsf{K} A \ar[rr, "\delta_A \otimes \delta_A "'] && \mathsf{K}^2 A \otimes \mathsf{K}^2 A && \mathds{1} 
      \end{tikzcd}
      \]
      \item $\mathsf{c}_A$ and $\mathsf{d}_A$ are coalgebra morphisms, i.e., the following axioms are satisfied:
      \[
        \begin{tikzcd}  
          \mathsf{K} A \ar[rr, "\delta_A"] \ar[d, "\mathsf{c}_A"'] && \mathsf{K}^2 A \ar[d, "\mathsf{K}\mathsf{c}_A"]                                                                                                     & \mathsf{K} A \ar[d, "\mathsf{d}_A"'] \ar[r, "\delta_A"] & \mathsf{K}^2 A \ar[d, "\mathsf{K}(\mathsf{d}_A)"] \\
          \mathsf{K} A \otimes \mathsf{K} A \ar[r, "\delta_A \otimes \delta_A"'] & \mathsf{K}^2 A \otimes \mathsf{K}^2 A \ar[r, "\mathsf{m}_{\mathsf{K}A, \mathsf{K}A}"'] & \mathsf{K}(\mathsf{K} A \otimes \mathsf{K} A) & \mathds{1} \ar[r, "\mathsf{m}_{\mathds{1}}"'] & \mathsf{K} \mathds{1}
        \end{tikzcd}
      \]
    \end{itemize}
  \item A \emph{relevant} comonad is a symmetric lax monoidal comonad $\mathsf{K}$ with a
  symmetric monoidal natural transformation $\mathsf{c} : \mathsf{K} \Rightarrow  \mathsf{K}(.) \otimes \mathsf{K}(.)$ 
  such that a pair $(\mathsf{K} A, \mathsf{c}_A)$ is a cocommutative cosemigroup,
  $\delta_A$ is a cosemigroup morphism and $\mathsf{c}_A$ is a coalgebra morphism.
  \item $((\mathsf{K}, \mathsf{m}), \varepsilon, \delta, \mathsf{d})$ is an \emph{affine} comonad, where $\mathds{d}$
  is a symmetric monoidal natural transformation with components
  $\mathsf{d}_A : \mathsf{K} A \to \mathds{1}$ such that the following square commutes:
  \[
  \begin{tikzcd}
    \mathsf{K} A \ar[r, "\delta_A"] \ar[d, "\mathsf{d}_A"'] & \mathsf{K}^2 A \ar[d, "\mathsf{K}(\mathsf{d}_A)"] \\
    \mathds{1} \ar[r, "\mathsf{m}_{\mathds{1}}"'] & \mathsf{K} A
  \end{tikzcd}
  \]
  \end{itemize}
\end{definition}

\begin{definition}
  Let $\Sigma = (I, \preceq, W, C)$ be a subexponential signature, 
  a \emph{$\Sigma$-assemblage} on an SMCC $\mathcal{C}$ is a structure $(\mathcal{C}, (\bang_s)_{s \in \Sigma}, (\mu_{s_1,s_2})_{s_1 \preceq s_2})$ such that:
  \begin{itemize}
    \item The mapping $s \mapsto \bang_s$ assigns a symmetric lax monoidal comonad on $\mathcal{C}$ such that if
    if $s_1 \preceq s_2$, then there is a symmetric lax monoidal comonad morphism $\mu_{s_1, s_2} : \bang_{s_2} \Rightarrow \bang_{s_1}$ such that
    $\mu_{s, s} = 1$ and $\mu_{s_2, s_3} \circ \mu_{s_1, s_2} = \mu_{s_1, s_3}$,
    \item If $s \in W$, then $\bang_s$ is an affine comonad,
    \item If $s \in C$, then  $\bang_s$ is a relevant comonad,
    \item If $s \in W \cap C$, then $\bang_s$ is an exponential comonad,
    \item If $s_1 \in C$ and $s_1 \preceq s_2$, then for each $A \in \mathcal{C}$, ${\mu_{s_1, s_2}}_{A} : \bang_{s_2} A \to \bang_{s_1} A$
    induces a cocommutative cosemigroup morphism.
    \item If both $s_1 \in W \cap C$ and $s_1 \preceq s_2$, then ${\mu_{s_1, s_2}}_A : (\bang_{s_2} A, {\mathsf{c}_{s_2}}_A, {\mathsf{d}_{s_2}}_A) \to (\bang_{s_1} A, {\mathsf{c}_{s_1}}_A, {\mathsf{d}_{s_1}}_A)$
    induces a cocommutative comonoid morphism for each $A \in \mathcal{C}$.
    \item If $s_1 \in W$ and $s_1 \preceq s_2$, then the following triangle commutes for each $A \in \mathcal{C}$:
    \[
    \begin{tikzcd}
      \bang_{s_2} A \ar[rr, "{\mu_{s_1, s_2}}_{A}"] \ar[dr, "{\mathsf{d}_{s_2}}_A"'] && \bang_{s_1} A \ar[dl, "{\mathsf{d}_{s_1}}_A"] \\
      & \mathds{1}
    \end{tikzcd}
    \]
  \end{itemize}
\end{definition}

\begin{convention}
In order to distinguish similar components of different comonads in a $\Sigma$-assemblage, we will sometimes label
them with an upper index as follows: for example, $\varepsilon^{s}$ will stand for
counit in a comonad with an index $s$, etc.
\end{convention}

\begin{example}~\label{quantale:example} One can extract a rather natural example of a $\Sigma$-assemblage from quantales,
  see \cite{eklund2018semigroups} for a more general context.

  A \emph{quantale} is a structure $\mathcal{Q} = (Q, \cdot, 1, \bigvee)$ where $(Q, \bigvee)$ is a complete lattice
  and $(Q, \cdot, 1)$ is a monoid such that the multiplication operation preserves suprema in each coordinate. In this examples, quantales
  are commutative by default.

  A \emph{quantic conucleus} on a quantale $\mathcal{Q}$ is a coclosure operator $g : \mathcal{Q} \to \mathcal{Q}$
  such that $g(a) \cdot g(b) \leq g(a \cdot b)$ and $g(1) = 1$ for each $a, b \in \mathcal{Q}$.
  It is a well-known fact that there is a bijection between subquantales of a quantale
  and quantic conuclei: if $g : \mathcal{Q} \to \mathcal{Q}$ is a quantic conucleus, then the set 
  $\mathcal{Q}_g = \{ a \in \mathcal{Q} \: | \: g(a) = a \}$ forms a subquantale of $\mathcal{Q}$, see, for example, \cite[Theorem 3.1.3]{570645}. 
  
  Let ${\bf SubQuant}(\mathcal{Q})$ denote the poset of all subquantales of a commutative quantale $\mathcal{Q}$ ordered by inclusion and 
  let $(\Sigma, \preceq)$ be any subexponential signature. Given an antitone map $\nu : \Sigma \to {\bf SubQuant}(\mathcal{Q})$, let us define
  a family of conuclei $\{ \bang_s : \mathcal{Q} \to \mathcal{Q} \: | \: s \in \Sigma \}$ in the following way, for $q \in Q$:
  \[
    \bang_s(q) = 
  \begin{cases}
    \bigvee \{ a \in \nu(s) \: | \: a \leq q \: \& \: a \leq 1 \}, \:\: \text{if $s \in W \setminus C$} \\
    \bigvee \{ a \in \nu(s) \: | \: a \leq q \: \& \: a \leq a \cdot a \}, \:\: \text{if $s \in C \setminus W$} \\
    \bigvee \{ a \in \nu(s) \: | \: a \leq q \: \& \: a \leq a \cdot a \: \& \: a \leq 1 \} \:\: \text{if $s \in C \cap W$} \\
    \bigvee \{ a \in \nu(s) \: | \: a \leq q \}, \:\: \text{otherwise} \\
  \end{cases}
  \]

  It is immediate that each $\Box_s$ is a quantic conucleus for any $s$, so any commutative quantale along with a family
  $\{ \Box_s : \mathcal{Q} \to \mathcal{Q} \: | \: s \in \Sigma \}$ forms a $\Sigma$-assemblage. 
\end{example}

There are quite a few examples of assemblages for the three-element signature $\Sigma_3$ consisting of 
three elements $I = \{ {\bf i}, {\bf r}, {\bf a} \}$ such that $W = \{ {\bf a}, {\bf i }\}$ and $C = \{ {\bf r}, {\bf i}\}$ and
the preorder is given as ${\bf r}, {\bf a} \preceq {\bf i}$. In this signature, we have three modalities $\bang_{\bf i}$, $\bang_{\bf r}$
and $\bang_{\bf a}$ with the extra principles $\bang_{\bf i} A \vdash \bang_{\bf r} A$ and $\bang_{\bf i} A \vdash \bang_{\bf a} A$. In other words,
if we can duplicate something in the intuitionistic sense, then we can duplicate in the relevant setting (but not vice versa). Similarly,
if we can destroy an entity intuitionistically, then it is destroyable in the affine setting.

\begin{example}~\label{lafont:category}
One can extract several curious examples of $\Sigma_3$-assemblages from presentably symmetric monoidal categories.
We refer the reader to \cite{Adamek_Rosicky_1994} and \cite[Chapter 2]{makkai1989accessible} for a more systematic study
of presentable categories.

Let $\mathcal{C}$ be a locally small category, then $\mathcal{C}$ is \emph{presentable} if
$\mathcal{C}$ has all small colimits and there is a \emph{set} $S$ of
$\kappa$-compact objects that generate $\mathcal{C}$ under $\kappa$-filtered colimits for some regular cardinal $\kappa$.
A presentably symmetric monoidal category is a presentable SMC such that $\otimes$ preserves all small colimits in each argument.

If $(\mathcal{C}, \otimes, \mathds{1})$ is a presentably symmetric monoidal category,
then $\mathcal{C}$ is also closed, which follows from the Adjoint Functor theorem.
Moreover, $\mathcal{C}/\mathds{1}$, ${\bf coMon}(\mathcal{C})$ and 
${\bf coSem}(\mathcal{C})$ are presentable; this can be shown similarly to \cite[Theorem 10]{Harington2025}.
Besides, the forgetful functors $\mathsf{U}_{\bf i} : {\bf coMon}(\mathcal{C}) \to \mathcal{C}$,
$\mathsf{U}_{\bf r} : {\bf coSem}(\mathcal{C}) \to \mathcal{C}$ and $\mathsf{U}_{\bf a} : \mathcal{C}/\mathds{1} \to \mathcal{C}$
have right adjoints. The functors ${\bf coMon}(\mathcal{C}) \to {\bf coSem}(\mathcal{C})$ and ${\bf coMon}(\mathcal{C}) \to \mathcal{C}/\mathds{1}$
sending a cocommutative comonoid in $\mathcal{C}$ to the cosemigroup (by dropping a counit) and to the corresponding object of $\mathcal{C}/\mathds{1}$ (by dropping a comultiplication)
are also strict symmetric monoidal and accessible.
Thus, the composites of the forgetful functors with those right adjoints induce a $\Sigma_3$-assemblage by Proposition~\ref{cocteau:bouton}, which we will prove below.

There are quite a few examples of categories satisfying this observation.
The first example is the category $\mbox{R-{\bf Mod}}$ of modules over a commutative ring $R$.
Thus, the category $\operatorname{Vect}_k$ of vector spaces over a field $k$
and the category of Abelian groups viewed as $\mathbb{Z}$-modules are also $\Sigma_3$-assemblages.
The category of $R$-modules $\mbox{R-{\bf Mod}}$ is equivalent to the category 
${\bf QCoh}(\mathcal{O}(\operatorname{Spec}R))$ of quasi-coherent $\mathcal{O}(\operatorname{Spec}(R))$-modules over the affine scheme $\operatorname{Spec}(R)$
\cite[Corollary II.5.5]{hartshorne2013algebraic}, so ${\bf QCoh}(\mathcal{O}(\operatorname{Spec}(R)))$
is a $\Sigma_3$-assemblage. There are also several examples of presentably SMC's (and thus $\Sigma_3$-assemblages) from algebraic topology
such as the category ${\bf Ch}(\mathcal{A})$ of chain complexes in a Grothendieck Abelian category $\mathcal{A}$ 
(see \cite[Proposition 3.10]{beke2000sheafifiable}). Thus all the aforementioned categories are models
of the system ${\bf SILL}(\lambda)_{\Sigma_3}$. There are already some results studying the semantics of (intuitionistic) linear logic
in structures from algebraic geometry as in \cite{mellies2022functorial}, so our examples demonstrate the alignment between
algebraic geometry and linear logic once more.
\end{example}
\begin{example}
One can also extract an example of an assemblage over the three-element signature with the trivial preordering obtained from equality in the study of resource modalities over the category of coherence spaces as they are described in \cite[Section 8.10]{mellies2009categorical},
where the exponential modality is decomposed into the relevant and the affine modalities obeying the Beck distributivity law. In that text, those modalities
are called the duplication and the suspension modalities respectively. Similarly, there is an example of an assemblage of the same similarity type
if we take the category of so-called Conway games with the exponential modality composed similarly from the affine and the relevant modalities in \cite{mellies2010resource}.
\end{example}
\begin{example}
There are also examples of assemblages in the coeffect calculus, a type-theoretic representation of resource-aware comonadic coeffects. In particular, \cite{brunel2014core}
introduces the coeffect calculus as an extension of intuitionistic linear type theory with a family of comonad modalities graded with elements of a preordered semiring. 
Semantically, this coeffect calculus is modelled with so-called exponential actions and bounded exponential actions, that is, comonadic actions of a bimonoidal category imposing weakening
and contraction from the structure of an acting bimonoidal category. Similar structures were studied in \cite{vollmer2024mixed} for describing models 
of a mixed linear and graded logic. 

In particular, if we take a preordered idempotent semiring, say a Boolean algebra, then exponential actions and bounded exponential actions over it will give examples of assemblages.
\end{example}

Now we formulate the theorem of semantic adequacy of ${\bf SILL}(\lambda)_{\Sigma}$ with respect to $\Sigma$-assemblages for a subexponential signature $\Sigma$.

Let $(\mathcal{C}, (\bang_s)_{s \in \Sigma}, (\mu_{s_1, s_2})_{s_1 \preceq s_2})$ be a $\Sigma$-assemblage on an SMCC $\mathcal{C}$ and let $\mathcal{I}$ be a function mapping propositional variables to objects of $\mathcal{C}$.
The interpretation $[\![.]\!]$ assigns an object of $\mathcal{C}$ to every type in an obvious way. 

The interpretation of the inference rules, in turn, is standard and is similar, for example, to \cite[§1.1.6]{abramsky2011introduction}, but the interpretation of the promotion rule
requires some attention as it involves symmetric lax monoidal comonad morphisms. Given a $\Sigma$-assemblage $(\mathcal{C}, (\bang_s)_{s \in \Sigma}, (\mu_{s_1,s_2})_{s_1 \preceq s_2})$ and let $s \preceq s_1, \ldots, s_n$,
the interpretation is given in the following way.
    \begin{prooftree}
      \AxiomC{$\Gamma_1 \xrightarrow{M_1} \bang_{s_1} A_1, \ldots, \Gamma_n \xrightarrow{M_n} \bang_{s_n} A_n$}
      \AxiomC{$\bang_{s_1} A_1 \otimes \ldots \otimes \bang_{s_n} A_n \xrightarrow{N} A$}
      \BinaryInfC{$\otimes_k \Gamma_k \xrightarrow{\bang_s(N) \circ \mathsf{m}^{s}_{\bang_{s_1} A_1, \ldots, \bang_{s_n} A_n} \circ \otimes_k {\mu_{s, s_k}}_{\bang_{s_k} A_k} \circ \otimes_k \delta^{s_k}_{A_k} \circ \otimes_k M_k} \bang_s A$}
    \end{prooftree}
  
    Let us visualise the above derivation with a diagram for $n = 2$:
    \[
    \begin{tikzcd}
      \Gamma_1 \otimes \Gamma_2 \ar[rrrr] \ar[d, "M_1 \otimes M_2"'] &&&& \bang_s A \\
      \bang_{s_1} A_1 \otimes \bang_{s_2} A_2 \ar[d, "\delta^{s_1}_{A_1} \otimes \delta^{s_2}_{A_2}"']     &&&& \bang_s (\bang_{s_1} A_1 \otimes \bang_{s_2} A_2) \ar[u, "\bang_s N"'] \\
      \bang^2_{s_1} A_1 \otimes \bang^2_{s_2} A_2 \ar[rrrr, "{\mu_{s, s_1}}_{\bang_{s_1} A_1} \otimes {\mu_{s, s_2}}_{\bang_{s_2} A_2}"'] &&&& \bang_s \bang_{s_1} A_1 \otimes \bang_s \bang_{s_2} A_2 \ar[u, "\mathsf{m}_{\bang_{s_1} A_1, \bang_{s_2} A_2}"'] 
    \end{tikzcd}
    \]
  
  \emph{An equality in context} $\Gamma \vdash M \equiv N : A$ stands for both $M$ and $N$ have type $A$ in $\Gamma$ and 
  they are equivalent modulo the smallest equivalence relation containing $\triangleleft$ from Definition~\ref{conversion:def}.

  \begin{theorem}~\label{assemblage:completeness}
  Every $\Sigma$-assemblage is a model of ${\bf SILL}(\lambda)_{\Sigma}$ and its ${\bf SILL}(\lambda)_{\Sigma}$-equalities in context.
  Moreover, there exists a $\Sigma$-assemblage $\mathcal{C}_{\Sigma}$ with the following property:
  if $\Gamma \vdash M : A$ and $\Gamma \vdash N : A$ such that $[\![M]\!]_{\mathcal{C}_{\Sigma}} = [\![N]\!]_{\mathcal{C}_{\Sigma}}$, then
  the equality in context $\Gamma \vdash M \equiv N : A$ is provable in
  the typed equational logic generated by the proof conversion rules from
  Appendix~\ref{appendix}.
  \end{theorem}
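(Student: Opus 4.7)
The plan is to split the theorem into the soundness and completeness parts and treat them separately. For soundness, I will argue by induction on the derivation that the interpretation $[\![M]\!]$ of a term $\Gamma\vdash M:A$ defined by Construction~\ref{interpretation} is a morphism $[\![\Gamma]\!]\to [\![A]\!]$, and that for each conversion rule in Figures~\ref{beta:conv}--\ref{box:conv} the two sides are interpreted by equal morphisms. The $\beta$-conversions for $\multimap$, $\otimes$ and $\mathds{1}$ reduce to the universal properties of the internal hom, the tensor product and the monoidal unit, together with the Substitution Lemma~\ref{sub:lemma} translated to the categorical setting as composition with a suitable substitution morphism. The rule $(\bang_s\beta)$ collapses by the counit axiom $\varepsilon_A\circ\delta_A=1_{\bang_s A}$ and the interaction of $\delta$ with the lax monoidal structure $\mathsf{m}^s$. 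The rules $({\bf W}\beta)$, $({\bf C}\beta)$ and those of Figure~\ref{structural:conv} are exactly the statements that $\mathsf{d}^s$ and $\mathsf{c}^s$ are natural, symmetric monoidal, and make $\bang_s A$ into a cocommutative cosemigroup/comonoid compatible with $\delta^s$; the coalgebra-morphism axioms of Definition~\ref{copy:del:def} are what makes the commuting conversions from Figure~\ref{naturality:conv} go through when $\mathsf{c}^s$ or $\mathsf{d}^s$ is pushed inside a promotion. The rule $(\bang_t)_{\bf conv}$ is the key case: it precisely expresses that the promotion assembled from $\delta^{s_k}$, $\mu(s,s_k)$, $\mathsf{m}^s$ and $\bang_s(-)$ is associative in the nested sense, and amounts to the commutativity of a pasting diagram that decomposes as two applications of naturality of $\mathsf{m}^s$ together with the comonad morphism axiom from Definition~\ref{comonad:morph} applied to $\mu(s,s_k)$.

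For completeness, I will build the standard syntactic $\Sigma$-assemblage $\mathcal{C}_{\Sigma}$. Its objects are ${\bf SILL}(\lambda)_{\Sigma}$-types, and a morphism $A\to B$ is an $\equiv$-equivalence class of terms $x:A\vdash M:B$. Composition is substitution $[N\circ M]=[N[x:=M]]$, which is well-defined and associative by the Substitution Lemma together with the $(\multimap\beta)$, $(\multimap\eta)$ and naturality conversions. The tensor product on objects is the type constructor $\otimes$ and on morphisms is defined through the term former $M\otimes N$ together with ${\bf let}\:x\otimes y=-$; the associator, unitors and symmetry are built from the ${\bf let}$-constructs and are proved coherent using the naturality conversions $(\otimes_{\bf nat})$, $(\mathds{1}_{\bf nat})$ and the $\eta$-rules. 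Closedness is witnessed by $[A,B]=A\multimap B$ with ${\bf ev}_{A,B}$ represented by $\lambda$-application.

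Next I equip each type constructor $\bang_s$ with the structure required by a $\Sigma$-assemblage. The action on objects is clear; on morphisms $[M]:A\to B$ one sets $\bang_s[M]$ to be represented by $\bang_s(M[x:={\bf der}_s\,y])\:{\bf with}\:y=y$ (or the analogous canonical promotion), with counit ${\bf der}_s$ and comultiplication obtained by iterated promotion. That these satisfy the comonad axioms is exactly the content of the $(\bang_s\beta)$, $(\bang_s\eta)$ and $(\bang_t)_{\bf conv}$ rules. The lax monoidal structure $\mathsf{m}^s_{A,B}$ is the class of ${\bf let}\:u=x\:{\bf in}\:{\bf let}\:v=y\:{\bf in}\:\bang_s(u\otimes v)\:{\bf with}\:u,v=u,v$, and monoidality conditions reduce to $(\bang_t)_{\bf conv}$. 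For $s\in W$, the natural transformation $\mathsf{d}^s$ is $[{\bf del}_s(x;{\bf 1})]$ and the affineness axioms follow from $({\bf del}_s)_{\bf nat}$ and $({\bf W}_{\operatorname{conv}})$; for $s\in C$, $\mathsf{c}^s$ is $[{\bf let}_s\,u,v=x\:{\bf in}\:u\otimes v]$ and the cosemigroup axioms together with the coalgebra-morphism condition are precisely $({\bf C}_{{\operatorname{conv}_1}})$--$({\bf C}_{{\operatorname{conv}_3}})$ and $({\bf CW}_{\operatorname{conv}})$. The comonad morphisms $\mu(s_1,s_2):\bang_{s_2}\Rightarrow\bang_{s_1}$ for $s_1\preceq s_2$ are defined by composing promotion at $s_1$ with dereliction at $s_2$, using that the promotion side-condition $s_1\preceq s_2$ legitimises the step; the monotonicity requirements of $W$ and $C$ under $\preceq$ make the required comonoid/cosemigroup preservation squares instances of $(\bang_t)_{\bf conv}$ combined with $({\bf W}_{\operatorname{conv}})$ or $({\bf C}_{{\operatorname{conv}_1}})$.

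Finally, to conclude the completeness statement, I interpret ${\bf SILL}(\lambda)_{\Sigma}$ in $\mathcal{C}_{\Sigma}$ with $\mathcal{I}(p_i)=p_i$ and observe by an induction on derivations, parallel to the soundness argument, that $[\![M]\!]=[M]$ in $\mathcal{C}_{\Sigma}$ for every term. Hence $[\![M]\!]=[\![N]\!]$ in $\mathcal{C}_{\Sigma}$ gives $[M]=[N]$, i.e.\ $\Gamma\vdash M\equiv N:A$. I expect the main obstacle to be verifying that the syntactic promotion morphism is well-defined on $\equiv$-classes and satisfies the lax monoidal comonad axioms with respect to $\mathsf{m}^s$: this is exactly where the intricate rule $(\bang_t)_{\bf conv}$ is indispensable, and where several of the permutation conversions from Figure~\ref{structural:conv} must be combined carefully. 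Once that is in hand, the induction on derivations that identifies the semantic interpretation with the syntactic morphism is essentially routine.
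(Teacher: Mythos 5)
Your proposal is correct and follows essentially the same route as the paper: soundness is proved by first checking that the interpretation respects substitution and then inducting on the generation of equalities in context, matching each conversion rule to the corresponding axiom of a $\Sigma$-assemblage, while completeness is obtained from the syntactic (free) $\Sigma$-assemblage with $[\![M]\!]=[M]$. The only difference is one of emphasis: the paper works a single soundness case (the $({\bf C}_{\operatorname{conv}_1})$ diagram chase) in full and defers the term-model construction to a reference, whereas you sketch the term model in more detail and summarise the soundness cases at a higher level.
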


  \begin{proof}  
  Soundness is proved by induction on the generation of $\Gamma \vdash M \equiv N : A$. 
  The completeness part is proved by constructing the free $\Sigma$-assemblage from the typing rules and
  proof conversions from ${\bf SILL}(\lambda)_{\Sigma}$. The proof is standard for categorical logic, so we omit it.
  \end{proof}

  Note that there are several other approaches to categorical semantics of intuitionistic linear logic such as Lafont categories (\cite{lafont1988logiques}) and new-Seely semantics 
  (introduced in \cite{seely1989linear} and further refined in \cite{bierman1995categorical}),
  but it seems that not all of those approaches are equally scalable to the polymodal resource policies in a robust way. 
  In particular, a new-Seely category is an SMCC $\mathcal{C}$ with finite products and a strong symmetric monoidal 
  $\bang : (\mathcal{C}, \times, \top) \to (\mathcal{C}, \otimes, \mathds{1})$ with the copying and deletion natural transformations satisfying the 
  extra coherence diagram. We have several problems if we try to generalise it for the polymodal case. First, it seems we need to add extra monoidal structures
  for each subexponential, so we could get symmetric lax monoidal functors $\bang_s : (\mathcal{C}, \otimes_s, \mathds{1}) \to (\mathcal{C}, \otimes, \mathds{1})$,
  where $\otimes_s$ is the extra tensor structure reflecting the resource policy prescribed by $s$. If we also need relations between comonads 
  $\bang_{s_2}$ and $\bang_{s_1}$ for $s_1 \preceq s_2$, then we need to add extra monoidal functors from $(\mathcal{C}, \otimes_{s_2}, \mathds{1})$ to $(\mathcal{C}, \otimes_{s_1}, \mathds{1})$.
  Therefore, we believe that the new-Seely approach to the polymodal linear logic would not be as idiomatic as the expansion of linear categories we are suggesting in this paper.

  The concept of a Lafont category allows thinking of the semantics of intuitionistic multiplicative linear logic in terms of the cofree construction
  from an SMCC $\mathcal{C}$ to its category of cocommutative comonoids. The approach in the fashion of Lafont can be directly transferred to the
  $\Sigma_3$-assemblages, if we assume that there are cofree constructions not only from $\mathcal{C}$ to the category cocommutative comonoids,
  but also to the category of cosemigroups and $\mathcal{C}/\mathds{1}$. It remains obscure to the author what the cofree construction might look like
  in a more general case in order to cover all possible $\Sigma$'s.

\section{$\Sigma$-assemblages and Resource Modalities}~\label{cocteau:marais}

In this section, we discuss how one can view comonads from $\Sigma$-assemblages
as monoidal adjunctions. To be more precise, we would like to discuss whether such
comonads can be materialised as resource modalities. The term ``resource modality'' was coined by Melli{\`e}s and Tabareau in \cite{mellies2007resource}
to categorise affine and relevant subexponential modalities in the fashion 
of Benton's linear-non-linear models \cite{benton1994mixed}. There is a well-known result in categorical semantics
of linear logic allowing one to unveil $\bang$ as a symmetric monoidal adjunction (see \cite{benton1994mixed}),
so in this section we extend this result to ${\bf SILL}(\lambda)_{\Sigma}$ for any $\Sigma$. Further, we show that scaling up the 
concept of a linear-non-linear model for multiple resource policies can be achieved by more idiomatic means if we involve the formal theory of comonads 
based on 2-categories. This will allow us to formulate and prove the representation theorem embedding the whole 2-category of all $\Sigma$-assemblages
into the 2-category of strict 2-functors of a particular kind that we will specify later in this section. 

We start by discussing relevant categories, the concept we need to treat comonads admitting the contraction rule as monoidal adjunctions.

\subsection{On Relevant Categories}~\label{relevant:cat:section}

This is an auxiliary subsection where we discuss some properties of relevant categories, that is,
symmetric monoidal categories where every object can be duplicated. The name ``relevant category'' comes from the fact
that such categories are models for substructural logics admitting the contraction rule, that is, relevant logics.
We will not be going into the categorical analysis of relevant logic and related substructural logics, 
but we refer the reader to \cite{petric2002coherence} and \cite{jacobs1994semantics}.

\begin{definition}~\label{relevant:cat}
  A symmetric monoidal category $\mathcal{C}$ admits \emph{duplication} (or \emph{copying}) if
  there is a symmetric monoidal natural transformation $\mathsf{copy} : 1_{\mathcal{C}} \Rightarrow 1_{\mathcal{C}} \otimes 1_{\mathcal{C}}$
  such that the following is satisfied:
  \[
  \begin{tikzcd}
  A \ar[rrr, "\mathsf{copy}_A"] \ar[d, "\mathsf{copy}_A"'] &&& A \otimes A \ar[d, "1_A \otimes \mathsf{copy}_A"] & A \otimes A \ar[rr, "\sigma_{A, A}"] && A \otimes A \\
  A \otimes A \ar[rrr, "\alpha_{A,A,A} \circ \mathsf{copy}_A \otimes 1_A"'] &&& A \otimes A \otimes A                    && A \ar[ul, "\mathsf{copy}_A"] \ar[ur, "\mathsf{copy}_A"']
  \end{tikzcd}
  \]
  We also call such categories \emph{relevant} categories.
\end{definition}

There is a more non-trivial equivalent characterisation of relevant categories which one can think of it as
a version of the Eckmann-Hilton argument \cite{eckmann1962group} for cocommutative cosemigroups.

\begin{lemma}~\label{cosem:relevant} Let $(\mathcal{C}, \otimes, \mathds{1})$ be a symmetric monoidal category.
  \begin{enumerate}
  \item The category ${\bf coSem}(\mathcal{C})$ of cocommutative cosemigroups is relevant. 
  \item The endofunctor ${\bf coSem} : {\bf SMC}_{\operatorname{str}} \to {\bf SMC}_{\operatorname{str}}$ forms 
  a comonad over ${\bf SMC}_{\operatorname{str}}$, the category of symmetric monoidal categories and \emph{strict} symmetric monoidal functors (regarded here as a 1-category).
  \item $\mathcal{C}$ is relevant iff the forgetful functor $\mathsf{U}_{\mathcal{C}} : {\bf coSem}(\mathcal{C}) \to \mathcal{C}$ has a strict symmetric monoidal section 
  $\mathsf{V}_{\mathcal{C}} : \mathcal{C} \to {\bf coSem}(\mathcal{C})$.
  \end{enumerate}
\end{lemma}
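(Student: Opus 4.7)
The plan is to attack the three parts in order, since Part 1 supplies the structural input that Parts 2 and 3 will leverage.

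For Part 1, I will set $\mathsf{copy}_{(A, \gamma_A)} := \gamma_A$ itself. Four verifications are then needed. The first is that $\gamma_A$ is a morphism of cosemigroups $(A, \gamma_A) \to (A, \gamma_A) \otimes (A, \gamma_A)$, i.e., $(\gamma_A \otimes \gamma_A) \circ \gamma_A = \gamma_{A \otimes A} \circ \gamma_A$, where $\gamma_{A \otimes A}$ is the shuffled tensor-product comultiplication supplied by Theorem~\ref{cosemi:monoidal}. Unfolding $\gamma_{A \otimes A}$ as (up to associators) the composite $(1 \otimes \sigma \otimes 1) \circ (\gamma_A \otimes \gamma_A)$, the equation will reduce, after a diagram chase, to a rearrangement of coassociativity and cocommutativity of $\gamma_A$. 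Once this is in hand, coassociativity and cocommutativity of $\mathsf{copy}$ as a natural transformation are literally the cosemigroup axioms on $\gamma_A$; naturality is the very definition of a cosemigroup morphism; and symmetric monoidal naturality says exactly $\mathsf{copy}_{(A,\gamma_A) \otimes (B, \gamma_B)} = \gamma_{A \otimes B}$, which is how the tensor product in ${\bf coSem}(\mathcal{C})$ is defined.

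For Part 2, I will define the action of ${\bf coSem}$ on a strict symmetric monoidal functor $\mathsf{F}$ by $(A, \gamma_A) \mapsto (\mathsf{F} A, \mathsf{F}(\gamma_A))$; strictness guarantees $\mathsf{F}(A \otimes A) = \mathsf{F} A \otimes \mathsf{F} A$ so that $\mathsf{F}(\gamma_A)$ has the correct codomain, and coassociativity and cocommutativity are preserved by any functor. Functoriality and strict monoidality of the induced functor are immediate. The counit of the comonad at $\mathcal{C}$ will be $\mathsf{U}_{\mathcal{C}}$, which is strict symmetric monoidal by Theorem~\ref{cosemi:monoidal}; the comultiplication $\delta_{\mathcal{C}} : {\bf coSem}(\mathcal{C}) \to {\bf coSem}({\bf coSem}(\mathcal{C}))$ will send $(A, \gamma_A) \mapsto ((A, \gamma_A), \gamma_A)$, where the outer $\gamma_A$ is promoted to a morphism in ${\bf coSem}(\mathcal{C})$ by Part 1. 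The comonad triangle and pentagon, as well as their monoidal naturality, then become tautologies on underlying objects.

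For Part 3, in the forward direction I will define $\mathsf{V}_{\mathcal{C}}(A) := (A, \mathsf{copy}_A)$ and $\mathsf{V}_{\mathcal{C}}(f) := f$; naturality of $\mathsf{copy}$ makes each $f$ a cosemigroup morphism so this is a well-defined functor, visibly a section of $\mathsf{U}_{\mathcal{C}}$. Symmetric monoidal naturality of $\mathsf{copy}$ gives $\mathsf{copy}_{A \otimes B} = \text{(shuffle)} \circ (\mathsf{copy}_A \otimes \mathsf{copy}_B)$, which is exactly the tensor product comultiplication of Theorem~\ref{cosemi:monoidal}, so the monoidal structure on $\mathsf{V}_{\mathcal{C}}$ is strict. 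Conversely, given a strict monoidal section $\mathsf{V}_{\mathcal{C}}$, each $A$ will acquire a distinguished cosemigroup structure $\mathsf{V}_{\mathcal{C}}(A) = (A, \gamma_A)$ because $\mathsf{U}_{\mathcal{C}} \mathsf{V}_{\mathcal{C}} = 1_{\mathcal{C}}$; setting $\mathsf{copy}_A := \gamma_A$, functoriality of $\mathsf{V}_{\mathcal{C}}$ delivers naturality, strict monoidality delivers the symmetric monoidal naturality, and coassociativity and cocommutativity come for free from the cosemigroup axioms on $(A, \gamma_A)$. I expect the only genuine calculation in the whole argument to be the shuffle rearrangement in Part 1 showing that $\gamma_A$ is a cosemigroup morphism into the tensor product; everything else will be definitional once that diagram commutes.
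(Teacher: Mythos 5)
Your proposal follows essentially the same route as the paper's own proof: the same choice $\mathsf{copy}_{(A,\gamma_A)} := \gamma_A$ in Part 1 (with the key check that $\gamma_A$ is a cosemigroup morphism into $(A\otimes A,\gamma_{A\otimes A})$ handled by the same coassociativity-plus-cocommutativity shuffle argument the paper delegates to Melli\`es's Corollary 17), the same counit-as-forgetful-functor and comultiplication $(A,\gamma_A)\mapsto((A,\gamma_A),\gamma_{(A,\gamma_A)})$ in Part 2, and the same two directions in Part 3, including extracting $\gamma_A = \mathsf{U}_{\mathcal{C}}(\mathsf{copy}_{\mathsf{V}_{\mathcal{C}}A})$ from a strict section via the relevance of ${\bf coSem}(\mathcal{C})$. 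The proposal is correct and correctly isolates the one nontrivial computation.
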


\begin{proof} Fix cocommutative cosemigroups $\mathsf{A} = (A, \gamma_A)$ and $\mathsf{B} = (B, \gamma_B)$ in $\mathcal{C}$.
 We define the duplication operation $(A, \gamma_A) \xrightarrow{\mathsf{copy}_{(A, \gamma_A)} } (A \otimes A, \gamma_{A \otimes A})$
by $\gamma_A : A \to A \otimes A$. 
Further, let us show that $\mathsf{copy}_{(A, \gamma_A)} : (A, \gamma_A) \to (A \otimes A, \gamma_{A \otimes A})$ is natural in $(A, \gamma_A)$.
  Indeed, if $f : (A, \gamma_A) \to (B, \gamma_B)$ is a cocommutative cosemigroup morphism, then the diagram in ${\bf coSem}(\mathcal{C})$:
  \[
  \begin{tikzcd}
    (A, \gamma_A) \ar[rr, "f"] \ar[d, "\mathsf{copy}_{(A, \gamma_A)}"'] && (B, \gamma_B) \ar[d, "\mathsf{copy}_{(B, \gamma_B)}"] \\
    (A \otimes A, \gamma_{A \otimes A}) \ar[rr, "f \otimes f"'] && (B \otimes B, \gamma_{B \otimes B})
  \end{tikzcd}
  \]
  translates to the following diagram in $\mathcal{C}$:
  \[
  \begin{tikzcd}
    A \ar[d, "\gamma_A"'] \ar[rr, "f"] && B \ar[d, "\gamma_B"] \\
    A \otimes A  \ar[d, "{\gamma_A} \otimes \gamma_A"'] \ar[rr, "f \otimes f"'] && B \otimes B \ar[d, "{\gamma_B} \otimes \gamma_B"] \\
    A \otimes A \otimes A \otimes A \ar[rr, "f^{\otimes 4} "']  && B \otimes B \otimes B \otimes B
  \end{tikzcd}
  \]
  The top diagram commutes by the definition of a cosemigroup morphism. The bottom diagram also commutes 
  by the definition of a cosemigroup morphism combined with the functoriality of tensor product.
  The axioms of a relevant category follow directly from the definition of a cocommutative cosemigroup.

  The forgetful functor $\mathsf{U}_{\mathcal{C}}$ is obviously strict and symmetric, so let us check that
  ${\bf coSem}$ is a comonad. The components of the counit $\varepsilon : {\bf coSem} \Rightarrow 1$ are given by
  the forgetful functors $\mathsf{U}_{\mathcal{C}} : {\bf coSem}(\mathcal{C}) \to \mathcal{C}$
  for each symmetric monoidal category $\mathcal{C}$. The naturality of $\varepsilon$ is immediate. We specify the components of comultiplication
  $\delta_{\mathcal{C}} : {\bf coSem}(\mathcal{C}) \to {\bf coSem}^2(\mathcal{C})$
  for an SMC $\mathcal{C}$ as follows. Let $(A, \gamma_A) \in {\bf coSem}(\mathcal{C})$ be a cocommutative cosemigroup
  in $\mathcal{C}$, then we put $\delta_{\mathcal{C}} : (A, \gamma_A) \mapsto ((A, \gamma_A), \gamma_{(A, \gamma_A)})$, where
  $\gamma_{(A, \gamma_A)} : (A, \gamma_A) \to (A, \gamma_A) \otimes (A, \gamma_A) = (A \otimes A, \gamma_{A \otimes A})$
  where the right-hand side is a cocommutative cosemigroup from the previous part of this lemma. Note that
  $(A, \gamma_A)$ should be necessarily cocommutative so we can conclude that $(A, \gamma_A) \otimes (A, \gamma_A)$
  exists by Theorem~\ref{cosemi:monoidal}. $\delta_{\mathcal{C}}$ is defined on morphisms similarly.
  The rest is to check naturality for $\delta$,
  that is, the following square commutes for SMC's $\mathcal{C}$ and $\mathcal{D}$ and a strict symmetric monoidal functor 
  $\mathsf{F} : \mathcal{C} \to \mathcal{D}$: 
  \[
  \begin{tikzcd}
    {\bf coSem}(\mathcal{C}) \ar[d, "\delta_{\mathcal{C}}"']  \ar[rr, "{\bf coSem}(\mathsf{F})"] && {\bf coSem}(\mathcal{D}) \ar[d, "\delta_{\mathcal{D}}"'] \\
    {\bf coSem}^2(\mathcal{C}) \ar[rr, "{\bf coSem}^2(\mathsf{F})"] && {\bf coSem}^2(\mathcal{D})
  \end{tikzcd}
  \]
  which is routine. Further, one can show that the axioms of comonads are satisfied as an easy exercise:
  \[
    \begin{tikzcd}
      {\bf coSem}(\mathcal{C}) \ar[rr, "\delta_{\mathcal{C}}"] \ar[d, "\delta_{\mathcal{C}}"'] && {\bf coSem}^2(\mathcal{C}) \ar[d, "\delta_{{\bf coSem}\mathcal{C}}"] & {\bf coSem}(\mathcal{C}) \ar[rr, "\delta_{\mathcal{C}}"] \ar[drr, "1_{{\bf coSem}(\mathcal{C})}"] \ar[d, "\delta_{\mathcal{C}}"'] && {\bf coSem}^2(\mathcal{C}) \ar[d, "\varepsilon_{{\bf coSem}(\mathcal{C})}"] \\
      {\bf coSem}^2(\mathcal{C}) \ar[rr, "{\bf coSem}(\delta_{\mathcal{C}})"'] && {\bf coSem}^3(\mathcal{C})                                      & {\bf coSem}^2(\mathcal{C}) \ar[rr, "{\bf coSem}(\varepsilon_{\mathcal{C}})"'] && {\bf coSem}(\mathcal{C}) \\
    \end{tikzcd}
  \]

  Further, if $\mathcal{C}$ is relevant, then there is a natural transformation 
  $\mathsf{copy} : 1_{\mathcal{C}} \Rightarrow 1_{\mathcal{C}} \otimes 1_{\mathcal{C}}$ such that each component $A \in \mathcal{C}$
  is endowed with the structure of a cocommutative cosemigroup with the copying operation given by
  the corresponding component. $\mathsf{copy}_A : A \to A \otimes A$.
  Put $\mathsf{V}_{\mathcal{C}} : A \mapsto (A, \mathsf{copy}_A)$. Then it is readily checked that $\mathsf{V}_{\mathcal{C}}$ is the required strict monoidal section.

  Now assume that the forgetful functor $\mathsf{U}_{\mathcal{C}} : {\bf coSem}(\mathcal{C}) \to \mathcal{C}$
  has a strict section $\mathsf{V}_{\mathcal{C}} : \mathcal{C} \to {\bf coSem}(\mathcal{C})$. In the previous part of the lemma, we showed that ${\bf coSem}(\mathcal{C})$ is relevant,
  so for each $A \in \mathcal{C}$ there is an arrow 
  $\mathsf{copy}_{\mathsf{V}_{\mathcal{C}} A} : \mathsf{V}_{\mathcal{C}} A \to (\mathsf{V}_{\mathcal{C}} A) \otimes (\mathsf{V}_{\mathcal{C}} A)$ in ${\bf coSem}(\mathcal{C})$ and, therefore,
  one has $\mathsf{U}_{\mathcal{C}}(\mathsf{copy}_{\mathsf{V}_{\mathcal{C}} A}) : A \to A \otimes A$ in $\mathcal{C}$,
  so we let $\gamma_A := \mathsf{U}_{\mathcal{C}}(\mathsf{copy}_{\mathsf{V}_{\mathcal{C}} A})$.
\end{proof}

We can also characterise relevant categories as coalgebras in the Eilenberg-Moore category 
${\bf SMC}_{\operatorname{str}}^{\bf coSem}$ consisting of coalgebras
$(\mathcal{C}, h_{\mathcal{C}})$ where $h_{\mathcal{C}} : \mathcal{C} \to {\bf coSem}(\mathcal{C})$
is a strict symmetric monoidal functor.

\begin{theorem}~\label{coalgebra:relevant} Let $\mathcal{C}$ be an SMC, then $\mathcal{C}$ is relevant iff $(\mathcal{C},h_{\mathcal{C}}) \in {\bf SMC}_{\operatorname{str}}^{\bf coSem}$ for
    some strict symmetric monoidal functor $h_{\mathcal{C}} : \mathcal{C} \to {\bf coSem}(\mathcal{C})$.
\end{theorem}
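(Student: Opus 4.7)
The plan is to recognise that a coalgebra structure for the comonad ${\bf coSem}$ on ${\bf SymmMonCat}_{\operatorname{str}}$ is essentially the same data as a strict symmetric monoidal section of the forgetful functor, so that Lemma~\ref{cosem:relevant}(3) directly applies. Concretely, an object of ${\bf SymmMonCat}_{\operatorname{str}}^{\bf coSem}$ is a pair $(\mathcal{C}, h_{\mathcal{C}})$ with a strict symmetric monoidal functor $h_{\mathcal{C}} : \mathcal{C} \to {\bf coSem}(\mathcal{C})$ satisfying the counit law $\varepsilon_{\mathcal{C}} \circ h_{\mathcal{C}} = 1_{\mathcal{C}}$ and the coassociativity law $\delta_{\mathcal{C}} \circ h_{\mathcal{C}} = {\bf coSem}(h_{\mathcal{C}}) \circ h_{\mathcal{C}}$. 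Recall that the counit $\varepsilon_{\mathcal{C}}$ is the forgetful functor $\mathsf{U}_{\mathcal{C}}$, so the counit law already says that $h_{\mathcal{C}}$ is a section of $\mathsf{U}_{\mathcal{C}}$.

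For the direction $(2) \Rightarrow (1)$, I simply read off from the coalgebra axioms: given $(\mathcal{C}, h_{\mathcal{C}})$, the counit law provides a strict symmetric monoidal section of $\mathsf{U}_{\mathcal{C}}$, so $\mathcal{C}$ is relevant by Lemma~\ref{cosem:relevant}(3). For the direction $(1) \Rightarrow (2)$, I start with the section $\mathsf{V}_{\mathcal{C}}(A) = (A, \mathsf{copy}_A)$ constructed in Lemma~\ref{cosem:relevant}(3) and verify that it is in fact a coalgebra structure, i.e.\ check coassociativity against $\delta_{\mathcal{C}}$ as defined in the proof of Lemma~\ref{cosem:relevant}(2).

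The core computation is to compare
\[
\delta_{\mathcal{C}}(\mathsf{V}_{\mathcal{C}}(A)) = \delta_{\mathcal{C}}(A, \mathsf{copy}_A) = \bigl((A, \mathsf{copy}_A),\, \mathsf{copy}_{(A, \mathsf{copy}_A)}\bigr)
\]
with
\[
{\bf coSem}(\mathsf{V}_{\mathcal{C}})(\mathsf{V}_{\mathcal{C}}(A)) = \bigl((A, \mathsf{copy}_A),\, \mathsf{V}_{\mathcal{C}}(\mathsf{copy}_A)\bigr),
\]
and to observe that both second components have the same underlying morphism $\mathsf{copy}_A$ in $\mathcal{C}$: the former by the construction $\mathsf{copy}_{(B,\gamma_B)} := \gamma_B$ used in Lemma~\ref{cosem:relevant}(1), and the latter by strictness of $\mathsf{V}_{\mathcal{C}}$ and the fact that $\mathsf{copy}_A$ is itself a cosemigroup morphism $(A, \mathsf{copy}_A) \to (A \otimes A, \mathsf{copy}_{A \otimes A})$ (this is coassociativity of $\mathsf{copy}_A$ expressed in ${\bf coSem}(\mathcal{C})$, together with Theorem~\ref{cosemi:monoidal} describing the tensor product of cocommutative cosemigroups). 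Since morphisms in ${\bf coSem}(\mathcal{C})$ are determined by their underlying morphisms in $\mathcal{C}$, the two expressions coincide, and coassociativity on morphisms then follows from functoriality of $\mathsf{V}_{\mathcal{C}}$ and naturality of $\mathsf{copy}$.

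The main obstacle, as I see it, is not either implication separately but rather the bookkeeping inside the verification that $\mathsf{V}_{\mathcal{C}}(\mathsf{copy}_A)$ and $\mathsf{copy}_{(A,\mathsf{copy}_A)}$ agree as morphisms of cosemigroups, which hinges on correctly identifying the cosemigroup structure $\mathsf{copy}_{A \otimes A}$ on $A \otimes A$ produced by Theorem~\ref{cosemi:monoidal} with the one forced by $\mathsf{V}_{\mathcal{C}}$ being strict monoidal; once that is in hand, everything else is formal manipulation with the definitions of $\varepsilon_{\mathcal{C}}$ and $\delta_{\mathcal{C}}$ from Lemma~\ref{cosem:relevant}(2).
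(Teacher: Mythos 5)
Your proposal is correct and follows essentially the same route as the paper: both directions reduce to Lemma~\ref{cosem:relevant}(3) by identifying the counit law of a ${\bf coSem}$-coalgebra with the section property, and the coassociativity square is verified by checking that $\delta_{\mathcal{C}}(A,\mathsf{copy}_A)$ and ${\bf coSem}(\mathsf{V}_{\mathcal{C}})(A,\mathsf{copy}_A)$ share the underlying morphism $\mathsf{copy}_A$. Your write-up is in fact somewhat more careful than the paper's about matching the cosemigroup structure on $A\otimes A$ coming from Theorem~\ref{cosemi:monoidal} with the one forced by strictness of $\mathsf{V}_{\mathcal{C}}$, but the argument is the same.
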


\begin{proof} If $\mathcal{C}$ is relevant, then the forgetful functor $\mathsf{U}_{\mathcal{C}} : {\bf coSem}(\mathcal{C}) \to \mathcal{C}$
    has a strict section $\mathsf{V}_{\mathcal{C}} : \mathcal{C} \to {\bf coSem}(\mathcal{C})$ by Lemma~\ref{cosem:relevant}. 

    Let us check that a pair $(\mathcal{C}, \mathsf{V}_{\mathcal{C}})$ is the required ${\bf coSem}$-coalgebra assuming that $\mathsf{V}_{\mathcal{C}}$
    is a strict symmetric monoidal section of $\mathsf{U}_{\mathcal{C}}$. That is, the following diagrams commute:
    \[
    \begin{tikzcd}
      \mathcal{C} \ar[r, "\mathsf{V}_{\mathcal{C}}"] \ar[dr, "1_\mathcal{C}"'] & {\bf coSem}(\mathcal{C}) \ar[d, "\mathsf{U}_{\mathcal{C}}"] & \mathcal{C} \ar[rr, "\mathsf{V}_{\mathcal{C}}"] \ar[d, "\mathsf{V}_{\mathcal{C}}"'] && {\bf coSem}(\mathcal{C}) \ar[d, "\delta_{\mathcal{C}}"] \\
      & \mathcal{C}                                                                                                        & {\bf coSem}(\mathcal{C}) \ar[rr, "{\bf coSem}(\mathsf{V}_{\mathcal{C}})"'] && {\bf coSem}^2(\mathcal{C})
    \end{tikzcd}
    \]
    The left-hand side triangle commutes automatically since $\mathsf{V}_{\mathcal{C}}$ is a strict symmetric monoidal section of the 
    forgetful functor. The right-hand side square commutes since $\mathsf{V}_{\mathcal{C}}$ is a section of $\mathsf{U}_{\mathcal{C}}$,
    so $\mathsf{V}_{\mathcal{C}}(A) = (A, \gamma_A)$.
    Hence ${\bf coSem}(\mathsf{V}_{\mathcal{C}})(A, \gamma_A) = ((A, \gamma_A), \gamma_{(A, \gamma_A)}) = \delta_{\mathcal{C}}(A, \gamma_A)$.
    Here $((A, \gamma_A), \gamma_{(A, \gamma_A)})$ is the cosemigroup structure on $(A, \gamma_A)$ in ${\bf coSem}(\mathcal{C})$.
    
    The other implication is immediate: if we have a coalgebra $h_{\mathcal{C}} : \mathcal{C} \to {\bf coSem}(\mathcal{C})$,
    then each $A \in \mathcal{C}$ is automatically endowed with the structure of a cocommutative cosemigroup.
\end{proof}

\subsection{The concept of a $\Sigma$-bouton and the 2-categorical approach}

Let $\mathcal{C}$ be a symmetric monoidal category and let $(\mathsf{K}, \mathsf{m}) : \mathcal{C} \to \mathcal{C}$
be a lax symmetric monoidal functor with the structure of a symmetric lax monoidal comonad. There is the
cofree-coforgetful adjunction between the underlying category $\mathcal{C}$ and the Eilenberg-Moore category 
$\mathcal{C}^{\mathsf{K}}$, which lifts to a symmetric monoidal adjunction if we equip the Eilenberg-Moore category
with a symmetric monoidal structure. Let $(A, h_A)$ and $(B, h_B)$ be $\mathsf{K}$-coalgebras in $\mathcal{C}^{\mathsf{K}}$.
The tensor product of $(A, h_A)$ and $(B, h_B)$ is defined as a coalgebra $(A \otimes B, h_{A \otimes B})$ where 
the coalgebra action $h_{A\otimes B}$
is given by the composite:
\[
\begin{tikzcd}
  A \otimes B \ar[rr, "h_A \otimes h_B"] && \mathsf{K} A \otimes \mathsf{K} B \ar[r, "\mathsf{m}_{A,B}"] & \mathsf{K} (A \otimes B)
\end{tikzcd}
\]
And the unit coalgebra is defined as $(\mathds{1}, \mathsf{m}_{\mathds{1}})$. Besides, the Eilenberg-Moore category $\mathcal{C}^{\mathsf{K}}$ is an SMC, see
\cite{lack2005limits} for more details and further generalisations.

Let $\mathcal{C}$ be a symmetric monoidal (closed) category, then a \emph{modality} on $\mathcal{C}$ is given by a symmetric monoidal adjunction
\[
\begin{tikzcd}
\mathcal{D} \ar[rr, shift left=1.25ex, "\mathsf{F}^{\natural}", ""{name=Fl}] && \mathcal{C} \ar[ll, shift left=1.25ex, "\mathsf{F}^{\sharp}", ""{name=Fr}]
\arrow[phantom, from=Fr, to=Fl, "\dashv" rotate=-90]
\end{tikzcd}
\]
for a symmetric monoidal category $\mathcal{D}$.
The notation is $((\mathsf{F}^{\natural}, \mathsf{n}), (\mathsf{F}^{\sharp}, \mathsf{o}), \mathcal{D}) : \mathcal{C} \to \mathcal{C}$,
where $\mathsf{n}$ and $\mathsf{o}$ are components of $\mathsf{F}^{\natural}$ and $\mathsf{F}^{\sharp}$ respectively 
\footnote{Sometimes we would just write $(\mathsf{F}^{\natural}, \mathsf{F}^{\sharp}, \mathcal{D}) : \mathcal{C} \to \mathcal{C}$ assuming that
the letters $\mathsf{n}$ and $\mathsf{o}$ are already reserved for the components of a left and a right adjoint respectively.}.

The key observation is that the composite $\mathsf{F}^{\natural} \circ \mathsf{F}^{\sharp} : \mathcal{C} \to \mathcal{C}$ is a symmetric lax monoidal comonad. 
Although the notion of a modality coincides with the notion of a monoidal adjunction,
we proceed with the term ``modality'' inspired by Yetter \cite{yetter1990quantales}, who used this name for a quantic conucleus 
over a (commutative) quantale, whose formal properties coincide with the properties of (symmetric) lax monoidal comonads.
For comparison, Melli{\`e}s and Tabareau \cite{mellies2007resource, mellies2010resource} call such adjunctions 
``resource modalities'', but further we will consider a broader class of comonadic modal operators, so we reserve the term
``resource modality'' only for the modalities dealing with resource management policies.

As Benton showed in \cite[Theorem 3, Theorem 8]{benton1994mixed}, every
linear category induces a symmetric monoidal adjunction between an SMCC and a Cartesian closed category.
Such adjunctions are called \emph{linear-non-linear models}, and they are an instance of the concept of a modality.
However, as it was discussed in \cite{mellies2009categorical}, one can weaken the definition of a linear-non-linear model and construct
the desired adjunction between an SMCC and a Cartesian category, not necessarily admitting exponential objects. 
We are in favour of the latter way since it is less restrictive. Benton built his linear-non-linear models by taking the adjunction between 
an SMCC $\mathcal{C}$ and the co-Kleisli category $\mathcal{C}_{\bang}$, which happens to be Cartesian closed. But this approach is not transferable
for a non-exponential $\bang_s$ since the co-Kleisli category $\mathcal{C}_{\bang_s}$ is not necessarily an SMC generally. 
Thus, Eilenberg-Moore categories have more favourable properties for us.

In this section we will show that every $\Sigma$-assemblage can be also viewed as an SMCC equipped with symmetric monoidal adjunctions of a particular kind and left morphisms between them.

In the previous section, we have already done some preliminary work on how one can characterise relevant categories
to use those results further to represent relevant subexponentials as a monoidal adjunction. Let us discuss
what monoidal categories we need to obtain a similar characterisation for affine subexponentials further.

\begin{definition}
  A symmetric monoidal category $\mathcal{C}$ is \emph{semicartesian}
  if the unit object is terminal. That is, there is a unique $\top_{A} : A \to \mathds{1}$ for each $A \in \mathcal{C}$. 
\end{definition}

Note that the tensor product operation admits projections in every semicartesian category:
\begin{center}
  $\pi_1 : A \otimes B \xrightarrow{1_A \otimes \top_{B}} A \otimes \mathds{1} \xrightarrow{\lambda_A} A$

  $\pi_2 : A \otimes B \xrightarrow{\top_{A} \otimes 1_B} \mathds{1} \otimes B \xrightarrow{\rho_B} B$
\end{center}

Let $(\mathcal{C}, \otimes, \mathds{1})$ be any monoidal category, then the slice category $\mathcal{C}/\mathds{1}$ becomes semicartesian with the tensor product 
operation defined as follows for $f, g \in \mathcal{C}/\mathds{1}$. 
\[
\begin{tikzcd}
  A \otimes B \ar[r, "f \otimes g"] & \mathds{1} \otimes \mathds{1} \ar[r, "\lambda = \rho"] & \mathds{1}
\end{tikzcd}
\]
$\mathcal{C}/\mathds{1}$ is semicartesian since the identity $1_{\mathds{1}} : \mathds{1} \to \mathds{1}$ is terminal
in $\mathcal{C}/\mathds{1}$.
One can also observe that a symmetric monoidal category is semicartesian iff the forgetful functor
$\mathsf{U} : \mathcal{C}/\mathds{1} \to \mathcal{C}$ is an isomorphism.

Now we give the definition of a resource modality.
\begin{definition}
  Let $\mathcal{C}$ be an SMCC and let $\mathcal{D}$ be an SMC, then a modality $(\mathsf{F}^{\natural}, \mathsf{F}^{\sharp}, \mathcal{D}) : \mathcal{C} \to \mathcal{C}$ is
  \emph{affine} if $\mathcal{D}$ is semicartesian,
  \emph{relevant} if $\mathcal{D}$ is relevant, \emph{exponential} if $\mathcal{D}$ is Cartesian.

  A modality is called a \emph{resource modality} if it satisfies any of the above conditions.
\end{definition}

The following concept expands Benton's linear-non-linear adjunctions for an arbitrary subexponential signature $\Sigma$. The idea behind this definition is that we choose a symmetric monoidal closed category and then we choose a family of symmetric monoidal categories parametrised
by $\Sigma$ materialising the structure of the underlying preorder with the corresponding morphisms of symmetric monoidal adjunctions.

\begin{definition}
  Let $\Sigma = (I, \preceq, W, C)$ be a subexponential signature, a \emph{$\Sigma$-bouton}\footnote{The word ``bouton'' means a flower bud in French. The choice of name is a metaphorical description of a category equipped with some adjunctions viewed as petals.} on a symmetric monoidal closed 
  category $\mathcal{C}$ consists of the following data:
  \begin{itemize}
    \item For each $s \in \Sigma$, there is a modality $(\mathsf{F}_s^{\natural}, \mathsf{F}_s^{\sharp}, \mathsf{T}_s) : \mathcal{C} \to \mathcal{C}$,
    \item If $s \in W$ ($s \in C$, $s \in W \cap C$), then the resource modality 
    $(\mathsf{F}_s^{\natural}, \mathsf{F}_s^{\sharp},\mathsf{T}_s)$ is affine (relevant, exponential),
    \item For $s_1 \preceq s_2$, there is a strict symmetric monoidal functor $\mathsf{T}_{s_1,s_2} : \mathsf{T}_{s_2} \to \mathsf{T}_{s_1}$ such that the following triangle commutes:
    \[
    \begin{tikzcd}
      \mathsf{T}_{s_2} \ar[dr, "{\mathsf{F}^{\natural}_{s_2}}"'] \ar[rr, "\mathsf{T}_{s_1, s_2}"] && \mathsf{T}_{s_1} \ar[dl, "{\mathsf{F}^{\natural}_{s_1}}"] \\
      & \mathcal{C}
    \end{tikzcd}
    \]
    \item If $s_1 \preceq s_2$ and $s_1 \in C$, then the following triangle commutes for each $A \in \mathsf{T}_{s_2}$:
      \[
      \begin{tikzcd}
        \mathsf{T}_{s_1,s_2}(A) \ar[drr, "\operatorname{copy}^{s_1}_{\mathsf{T}_{s_1,s_2} A}"'] \ar[rr, "\mathsf{T}_{s_1,s_2}(\operatorname{copy}^{s_2}_A)"] && \mathsf{T}_{s_1,s_2}(A \otimes A) \ar[d, equal] \\
        && \mathsf{T}_{s_1,s_2}(A) \otimes \mathsf{T}_{s_1,s_2}(A)
      \end{tikzcd}
      \]
      In other words, the copying operations, whenever they are given, are preserved homomorphically.
  \item If $s_1 \preceq s_2$ and $s_1 \in W$, then the following triangle commutes for each $A \in \mathsf{T}_{s_2}$:
      \[
      \begin{tikzcd}
        \mathsf{T}_{s_1,s_2}(A) \ar[drr, "\top^{s_1}_{\mathsf{T}_{s_1,s_2} A}"'] \ar[rr, "\mathsf{T}_{s_1,s_2}(\top^{s_2}_{A})"] && \mathsf{T}_{s_1,s_2}(\mathds{1}) \ar[d, equal] \\
        && \mathds{1}
      \end{tikzcd}
      \]
  \end{itemize}
\end{definition}

The ultimate goal of this subsection is to show that the 2-category of all $\Sigma$-assemblages 1,2-fully faithfully into the category of all $\Sigma$-boutons to strengthen
Benton's result for all possible subexponential signatures by using the formal theory of comonads.
The first step is to define the required 2-categories accurately.

\begin{definition}~\label{2:cat:cocteau} Let $\Sigma$ be any subexponential signature.
${\bf Assemblage}_{\Sigma}$ is the 2-category of all $\Sigma$-assemblages consisting of the data:
\begin{itemize}
  \item The class of 0-cells is the class of all $\Sigma$-assemblages,
  \item Let $(\mathcal{C}, (\bang_s)_{s \in \Sigma}, (\mu_{s_1, s_2})_{s_1 \preceq s_2} )$ and
  $(\mathcal{D}, (\bang'_s)_{s \in \Sigma}, (\mu'_{s_1, s_2})_{s_1 \preceq s_2} )$ be $\Sigma$-assemblages, 
  the class of 1-cells ${\bf Assemblage}_{\Sigma}(\mathcal{C}, \mathcal{D})$ consists of strict symmetric monoidal functors
  $\mathsf{G} : \mathcal{C} \to \mathcal{D}$ such that:
  \begin{itemize}
  \item $\mathsf{G}$ induces symmetric lax monoidal comonad morphisms $(\mathsf{G}, \widetilde{\mathsf{G}_s}) : (\mathcal{C}, \bang_s) \Rightarrow (\mathcal{D}, \bang'_s)$ for every $s$
   with the components $\widetilde{\mathsf{G}_s}: \mathsf{G} \bang_s \Rightarrow \bang'_s \mathsf{G}$ in the sense of Definition~\ref{comonad:morph},
  \item For each $s \in C$, the functor $\mathsf{G}_{s} : \mathcal{C}^{\bang_s} \to \mathcal{D}^{\bang'_{s}}$
 of the corresponding Eilenberg-Moore categories preserves the copying arrow for each coalgebra $(A, h_A) \in \mathcal{C}^{\bang_{s}}$:
  \[
    \begin{tikzcd} 
     \mathsf{G} A \ar[d, equal] \ar[rr, "\mathsf{G}_{s}(\gamma_A)" ] && \mathsf{G} A \otimes \mathsf{G} A \ar[d, equal] \\
     \mathsf{G} A \ar[rr, "\gamma_{\mathsf{G}_{s}(A)}"'] && \mathsf{G} A \otimes \mathsf{G} A
    \end{tikzcd}
  \] 
  Similarly, if $s \in W$, the functor $\mathsf{G}_s : \mathcal{C}^{\bang_{s}} \to \mathcal{D}^{\bang'_{s}}$
  preserves the deletion arrow as follows for each coalgebra $(A, h_A) \in \mathcal{C}^{\bang_{s}}$:
  \[
  \begin{tikzcd}
   \mathsf{G} A \ar[d, equal] \ar[rr, "\mathsf{G}_{s}(\iota_A)"] && \mathds{1} \ar[d, equal]\\
   \mathsf{G} A \ar[rr, "\iota_{\mathsf{G}_{s}(A)}"'] && \mathds{1}
  \end{tikzcd}
  \]
  \item the following squares commute for any $s_1 \preceq s_2$ in $\Sigma$:
  \begin{equation}~\label{cocteau:1cell:axiom1}
  \begin{tikzcd}
  \mathsf{G} \bang_{s_2} \ar[rr, "\mathsf{G}_{s_2}"] \ar[d, "\mathsf{G} \mu_{s_1,s_2}"'] && \bang'_{s_2} \mathsf{G} \ar[d, "\mu'_{s_1,s_2} \mathsf{G}"] \\
  \mathsf{G} \bang_{s_1} \ar[rr, "\mathsf{G}_{s_1}"'] && \bang'_{s_1} \mathsf{G}  \\
  \end{tikzcd}
  \end{equation}
  \end{itemize}

    \item Let $\mathsf{G}, \mathsf{H} \in {\bf Assemblage}_{\Sigma}(\mathcal{C}, \mathcal{D})$
    be 1-cells, the class ${\bf Assemblage}_{\Sigma}(\mathsf{G}, \mathsf{H})$ of 2-cells from $\mathsf{G}$ to $\mathsf{H}$
    consists of symmetric monoidal natural transformations $\theta : \mathsf{G} \Rightarrow \mathsf{H}$ inducing symmetric lax monoidal comonad morphism 
    transformations between $(\mathsf{G}, \widetilde{\mathsf{G}_s})$ and $(\mathsf{H}, \widetilde{\mathsf{H}_s})$ for any $s$ and the following condition is satisfied for $s_1 \preceq s_2$:
      \begin{equation}~\label{final:axiom}
      \begin{tikzcd}
        \mathsf{G} \bang_{s_2} \ar[rr, "\theta_{s_2}"] \ar[d, "\mathsf{G} \mu_{s_1,s_2}"']  && \bang'_{s_2} \mathsf{H} \ar[d, "\mu'_{s_1,s_2} \mathsf{H}"] \\
        \mathsf{G} \bang_{s_1} \ar[rr, "\theta_{s_1}"'] &&  \bang'_{s_1} \mathsf{H} \\
      \end{tikzcd}
    \end{equation}
    where $\theta_{s}$ for any $s$ is defined as
    \[
    \begin{tikzcd}
      \mathsf{G} \bang_{s} \ar[rr, "\theta \bang_{s}"] \ar[drr, dashrightarrow, "\theta_{s}"] \ar[d, "\widetilde{\mathsf{G}_{s}}"'] && \mathsf{H} \bang_{s} \ar[d, "\widetilde{\mathsf{H}_{s}}"] \\
      \bang'_{s} \mathsf{G} \ar[rr, "\bang'_{s} \theta"'] && \bang'_{s} \mathsf{H}
    \end{tikzcd}
    \]
\end{itemize}
\end{definition}

Prima facie, the reader might find the definition of 2-cells incomplete since 1-cells induce
strict functors of Eilenberg-Moore categories that preserve the copying and deletion
operations, but this is not reflected at the level of 2-cells. Let $\theta : \mathsf{G}
\Rightarrow \mathsf{H}$ be a 2-cell in ${\bf Assemblage}_{\Sigma}$. Recall that $\mathsf{G}$
and $\mathsf{H}$ induce comonad morphisms $(\mathcal{C}, \bang_{s}) \Rightarrow (\mathcal{D},
\bang'_{s})$ with the components $\widetilde{\mathsf{G}}_{s}$ and
$\widetilde{\mathsf{H}}_{s}$, so we have the induced functors of Eilenberg-Moore categories
$\mathsf{G}_{s}, \mathsf{H}_{s} : \mathcal{C}^{\bang_{s}} \to \mathcal{D}^{\bang'_{s}}$,
given by $\mathsf{G}_{s} : (A, h_A) \mapsto (\mathsf{G} A,
(\widetilde{\mathsf{G}}_{s})_{A} \circ \mathsf{G}(h_A))$ and $\mathsf{H}_{s} : (A, h_A)
\mapsto (\mathsf{H} A, (\widetilde{\mathsf{H}}_{s})_{A} \circ \mathsf{H}(h_A))$. Then
$\theta : \mathsf{G} \Rightarrow \mathsf{H}$ extends to a natural transformation
$\widehat{\theta}_{s} : \mathsf{G}_{s} \Rightarrow \mathsf{H}_{s}$ between strict symmetric
monoidal functors, so the following diagram automatically commutes for each coalgebra
$(A, h_A) \in \mathcal{C}^{\bang_{s}}$:
\[
\begin{tikzcd}
  \mathsf{G} A \ar[d, "\widetilde{\theta}_A"'] \ar[rr, "\widetilde{\mathsf{G}_{s}}{(\gamma_A)}"] && \mathsf{G} A \otimes \mathsf{G} A   \ar[d, "\widetilde{\theta}_A \otimes \widetilde{\theta}_A"] \ar[rr, equal]  && \mathsf{G} (A \otimes A)  \ar[d, "\widetilde{\theta}_{A \otimes A}"]  \\
  \mathsf{H} A \ar[rr, "\widetilde{\mathsf{H}_{s}}{(\gamma_A)}"'] && \mathsf{H} A \otimes \mathsf{H}A \ar[rr, equal] && \mathsf{H} (A \otimes A)
\end{tikzcd}
\]

Now we would like to fully faithfully embed the whole 2-category ${\bf Assemblage}_{\Sigma}$ to some 2-category of 2-functors, their strict natural transformations
and modifications similarly by using Proposition~\ref{coalgebra:theorem:pr} that characterises single comonads. Before specifying such a 2-category, let us show the following fact.
It is a folklore fact from basic category theory that the composition of adjoint functors is a comonad. 
However, the concept of a $\Sigma$-bouton also has strict functors to reflect the corresponding comonad morphism.
Therefore, we need a slightly more complicated proof compared to a standard argument in linear logic
showing how one can obtain a linear category from a linear-non-linear model as, for example, in 
\cite[§7.4]{mellies2009categorical}.

\begin{prop}~\label{cocteau:bouton} 
  Let $\Sigma$ be a subexponential signature. Every $\Sigma$-bouton induces a $\Sigma$-assemblage, and, conversely, 
  every $\Sigma$-assemblage induces a $\Sigma$-bouton.
\end{prop}

\begin{proof}
  Given a $\Sigma$-bouton $(\mathcal{C}, (\mathsf{F}^{\natural}_s \dashv \mathsf{F}^{\sharp}_s)_{{s \in \Sigma}}, {\mathsf{T}_{s_1,s_2}}_{s_1 \preceq s_2})$.
  The complete proof of the claim that $\Sigma$-bouton induces an assemblage is routine, but let us write down a proof of how one obtains a relevant modality
  from a monoidal adjunction with a relevant category and how one can get a morphism of relevant modalities from a corresponding left adjunction morphism.

  Let $(\mathsf{F}_s^{\natural}, \mathsf{F}_s^{\sharp}, \mathsf{T}_s) : \mathcal{C} \to \mathcal{C}$ for $s \in C$
be a relevant resource modality on $\mathcal{C}$, let us show that the composite $\bang_{s} = \mathsf{F}_s^{\natural} \circ \mathsf{F}_s^{\sharp} : \mathcal{C} \to \mathcal{C}$
forms a relevant comonad on $\mathcal{C}$. The components are given as follows:
\begin{itemize}
  \item $\mathsf{m}^{s}_{A, B} := \mathsf{F}_s^{\natural} \mathsf{F}_s^{\sharp} A \otimes \mathsf{F}_s^{\natural} \mathsf{F}_s^{\sharp} B \xrightarrow{\mathsf{n}^s_{\mathsf{F}_s^{\sharp}A, \mathsf{F}_s^{\sharp} B}} \mathsf{F}_s^{\natural} (\mathsf{F}_s^{\sharp} A \otimes \mathsf{F}_s^{\sharp}B) \xrightarrow{\mathsf{F}_s^{\natural} (\mathsf{o}^s_{A, B})} \mathsf{F}_s^{\natural} \mathsf{F}_s^{\sharp} (A \otimes B)$,
  \item $\mathsf{m}^{s}_{\mathds{1}} := \mathds{1} \xrightarrow{\mathsf{n}^{s}_{\mathds{1}}}  \mathsf{F}_s^{\natural} \mathds{1} \xrightarrow{\mathsf{F}_s^{\natural}(\mathsf{o}^s_{\mathds{1}})} \mathsf{F}_s^{\natural} \mathsf{F}_s^{\sharp} \mathds{1}$,
  \item $\varepsilon^{s}_A : \mathsf{F}_s^{\natural} \mathsf{F}_s^{\sharp} A \to A$,
  \item $\delta^{s}_A : \mathsf{F}_s^{\natural} \mathsf{F}_s^{\sharp} A \xrightarrow{\mathsf{F}_s^{\natural}(\eta^s_{\mathsf{F}_s^{\sharp} A}) }  \mathsf{F}_s^{\natural} \mathsf{F}_s^{\sharp} \mathsf{F}_s^{\natural} \mathsf{F}_s^{\sharp} A$.
\end{itemize}

A similar statement holds for affine and exponential resource modalities. The copying natural transformation
  $\mathsf{c}^{s}$ is given by the components $\mathsf{c}^{s}_A : \bang_{s} A \to \bang_s A \otimes \bang_s A$, each of which is defined by the following composite:
  \[
    \bang_{s} A = \mathsf{F}_s^{\natural} \mathsf{F}_s^{\sharp} A \xrightarrow{\mathsf{F}_s^{\natural}(\mathsf{copy}_{{\mathsf{F}_s^{\sharp} A}} ) } \mathsf{F}_s^{\natural} (\mathsf{F}_s^{\sharp} A \otimes  \mathsf{F}_s^{\sharp} A) \xrightarrow{\mathsf{p}_{\mathsf{F}_s^{\sharp} A, \mathsf{F}_s^{\sharp} A }} (\mathsf{F}_s^{\natural} \mathsf{F}_s^{\sharp} A)^{\otimes 2} = \bang_s A \otimes \bang_s A.
  \]
  where
  \[\mathsf{p} : \mathsf{F}^{\natural}_s(A \otimes B) \to \mathsf{F}^{\natural}_s A \otimes \mathsf{F}^{\natural}_s B
  \]
  is a component of $\mathsf{F}^{\natural}$, which is symmetric oplax monoidal by \cite[Proposition 12]{mellies2009categorical}. The complete check of the relevant comonad conditions is routine.

  Now let $s_1 \preceq s_2$ and $s_1 \in C$. We need to construct a comonad morphism $\mu_{s_1,s_2} : \bang_{s_2} \Rightarrow \bang_{s_1}$
  from $\mathsf{T}_{s_1,s_2} : \mathsf{T}_{s_2} \to \mathsf{T}_{s_1}$ such that the following triangle commutes:
  \[
  \begin{tikzcd}
    \mathsf{T}_{s_2} \ar[dr, "\mathsf{F}_{s_2}^{\natural}"']  \ar[rr, "\mathsf{T}_{s_1,s_2}"] && \mathsf{T}_{s_1} \ar[dl, "\mathsf{F}_{s_1}^{\natural}"] \\
    & \mathcal{C}
  \end{tikzcd}
  \]
  So we let:
  \[
    \mu_{s_1,s_2} := \mathsf{F}_{s_2}^{\natural} \mathsf{F}_{s_2}^{\sharp} = \mathsf{F}_{s_1}^{\natural} \mathsf{T}_{s_1,s_2} \mathsf{F}_{s_2}^{\sharp} \xrightarrow{\mathsf{F}_{s_1}^{\natural}(\eta^s  \mathsf{T}_{s_1,s_2} \mathsf{F}_{s_2}^{\sharp})} \mathsf{F}_{s_1}^{\natural} \mathsf{F}_{s_1}^{\sharp} \mathsf{F}_{s_1}^{\natural}  \mathsf{T}_{s_1,s_2} \mathsf{F}_{s_2}^{\sharp} = \mathsf{F}_{s_1}^{\natural} \mathsf{F}_{s_1}^{\sharp} \mathsf{F}_{s_2}^{\natural} \mathsf{F}_{s_2}^{\sharp} \xrightarrow{\mathsf{F}_{s_1}^{\natural} \mathsf{F}_{s_1}^{\sharp} \varepsilon^{s_2}} \mathsf{F}_{s_1}^{\natural} \mathsf{F}_{s_1}^{\sharp}
  \]
  Note that a pair of functors $(1_{\mathcal{C}}, \mathsf{T}_{s_1,s_2})$ is a left adjunction morphism, then by Proposition~\ref{2:comonad:morphish:prop},
  $\mu_{s_1,s_2}$ is a symmetric lax monoidal comonad morphism. The rest is to check that $\mu_{s_1,s_2}$ preserves copying, that is, we must show that the following diagram commutes:
  \begin{equation}~\label{goal:2}
    \begin{tikzcd}
      \bang_{s_2} A \ar[rr, "\mathsf{c}^{s_2}_A"] \ar[d, "{\mu_{s_1,s_2}}_A"'] && (\bang_{s_2} A)^{\otimes 2} \ar[d, "{{\mu_{s_1,s_2}}_A} \otimes {{\mu_{s_1,s_2}}_A}"] \\
      \bang_{s_1} A \ar[rr, "\mathsf{c}^{s_1}_A"] && (\bang_{s_1} A)^{\otimes 2}\\
    \end{tikzcd}
  \end{equation}

  Let us unveil the above diagram:
    \[
    \begin{tikzcd}
      |[alias=node11]| \mathsf{F}_{s_1}^{\natural}  \mathsf{T}_{s_1,s_2} \mathsf{F}_{s_2}^{\sharp} A \ar[dd, equal] \ar[rrr, "\mathsf{F}_{s_1}^{\natural}  \mathsf{T}_{s_1,s_2} (\operatorname{copy}^{s_2}_{\mathsf{F}_{s_2}^{\sharp} A})"] &&& |[alias=node31]| \mathsf{F}_{s_1}^{\natural}  \mathsf{T}_{s_1,s_2} (\mathsf{F}_{s_2}^{\sharp} A)^{\otimes 2} \ar[dd, equal] \ar[rrrr, "\mathsf{p}^{s_2}_{\mathsf{T}_{s_2}^{\sharp} A, \mathsf{T}_{s_2}^{\sharp} A}"] &&&& (\mathsf{F}_{s_1}^{\natural}  \mathsf{T}_{s_1,s_2} \mathsf{F}_{s_2}^{\sharp} A)^{\otimes 2} \ar[dddd, "(\mathsf{F}_{s_1}^{\natural}(\eta^{s_1}_{ \mathsf{T}_{s_1,s_2} \mathsf{F}_{s_2}^{\sharp} A}))^{\otimes 2}"'] \\
      \\
      |[alias=node12]| \mathsf{F}_{s_1}^{\natural}  \mathsf{T}_{s_1,s_2} \mathsf{F}_{s_2}^{\sharp} A \ar[rrr, "\mathsf{F}_{s_1}^{\natural} (\mathsf{copy}_{ \mathsf{T}_{s_1,s_2} \mathsf{F}_{s_2}^{\sharp} A})"] \ar[dd, "\mathsf{F}_{s_1}^{\natural}(\eta^{s_1}_{ \mathsf{T}_{s_1,s_2} \mathsf{F}_{s_2}^{\sharp} A})"'] &&& |[alias=node21]| \mathsf{F}_{s_1}^{\natural}  \mathsf{T}_{s_1,s_2} (\mathsf{F}_{s_2}^{\sharp} A)^{\otimes 2} \ar[dd, "\mathsf{F}_{s_1}^{\natural}((\eta^{s_1}_{ \mathsf{T}_{s_1,s_2} \mathsf{T}_{s_2}^{\sharp} A})^{\otimes 2})"] \\
      \\
      |[alias=node13]| \mathsf{F}_{s_1}^{\natural} \mathsf{F}_{s_1}^{\sharp} \mathsf{F}_{s_1}^{\natural}  \mathsf{T}_{s_1,s_2} \mathsf{F}_{s_2}^{\sharp} A \ar[dd, "\mathsf{F}_{s_1}^{\natural} \mathsf{F}_{s_1}^{\sharp} (\varepsilon^{s_1}_A)"'] \ar[rrr, "\mathsf{F}_{s_1}^{\natural}(\mathsf{copy}_{\mathsf{F}_{s_1}^{\sharp} \mathsf{F}_{s_1}^{\natural}  \mathsf{T}_{s_1,s_2} \mathsf{F}_{s_2}^{\sharp} A} )"] &&& |[alias=node22]| \mathsf{F}_{s_1}^{\natural} (\mathsf{F}_{s_1}^{\sharp} \mathsf{F}_{s_1}^{\natural}  \mathsf{T}_{s_1,s_2} \mathsf{F}_{s_2}^{\sharp} A)^{\otimes 2} \ar[dd, "\mathsf{F}_{s_1}^{\natural} (\mathsf{F}_{s_1}^{\sharp}(\varepsilon^{s_2}_A))^{\otimes 2}"] \ar[rrrr, "\mathsf{p}^{s_1}"] &&&& |[alias=node32]| (\mathsf{F}_{s_1}^{\natural} \mathsf{F}_{s_1}^{\sharp} \mathsf{F}_{s_1}^{\natural}  \mathsf{T}_{s_1,s_2} \mathsf{F}_{s_2}^{\sharp} A)^{\otimes 2} \ar[dd, "(\mathsf{F}_{s_1}^{\natural} \mathsf{F}_{s_1}^{\sharp} (\varepsilon^{s_2}_A))^{\otimes 2}"'] \\
      \\
      \mathsf{F}_{s_1}^{\natural} \mathsf{F}_{s_1}^{\sharp} A \ar[rrr, "\mathsf{F}_{s_1}^{\natural} (\mathsf{copy}_{\mathsf{F}_{s_1}^{\sharp} A})"'] &&& |[alias=node23]| \mathsf{F}_{s_1}^{\natural} (\mathsf{F}_{s_1}^{\sharp} A \otimes \mathsf{F}_{s_1}^{\sharp} A ) \ar[rrrr, "\mathsf{p}^{s_1}_{\mathsf{F}_{s_1}^{\sharp} A, \mathsf{F}_{s_1}^{\sharp} A}"'] &&&& |[alias=node33]| \mathsf{F}_{s_1}^{\natural} \mathsf{F}_{s_1}^{\sharp} A \otimes \mathsf{F}_{s_1}^{\natural} \mathsf{F}_{s_1}^{\sharp} A
      \ar[from=node11, to=node21, eq=diag22]
      \ar[from=node12, to=node22, eq=diag23]
      \ar[from=node13, to=node23, eq=diag24]
      \ar[from=node31, to=node32, eq=diag25]
      \ar[from=node22, to=node33, eq=diag26]
    \end{tikzcd}
    \]
  where:
  \begin{itemize}
    \item (\ref{diag22}) holds by the definition of a $\Sigma$-bouton,
    \item (\ref{diag23}) and (\ref{diag24}) hold by the naturality of $\mathsf{copy}$,
    \item (\ref{diag25}) and (\ref{diag26}) commute since ${F}^{\natural}$ is symmetric oplax monoidal by \cite[Proposition 12]{mellies2009categorical}.
  \end{itemize}

To show that every $\Sigma$-assemblage induces a $\Sigma$-bouton, one needs to show that
every constituent comonad of a $\Sigma$-assemblage induces the corresponding adjunction.
The construction building an adjunction from an exponential comonad is standard, so we just note
that the construction by Benton or Melli{\`e}s is completely preserved since one can think of linear categories
as exponential reducts of $\Sigma$-assemblages. Dealing with the affine part is simple as the Eilenberg-Moore category of coalgebras
over an affine comonad is semicartesian itself.

In the case of relevant modalities, the key observation is that the resulting Eilenberg-Moore category is relevant, so we get
the desired relevant resource modality with the cofree and coforgetful adjunction. The proof is an adaptation of the construction described in \cite[§7.4]{mellies2009categorical} with a few modifications,
but the structure of the argument is similar. Now fix $s \in C$ and take $\bang_s : \mathcal{C} \to \mathcal{C}$. The key idea is that if we have a relevant comonad, then every cofree coalgebra 
$(\bang_s  A, \delta^s_A)$ is endowed with a cocommutative cosemigroup structure and we lift this structure
to every coalgebra $(A, h_A)$ by showing that every coalgebra of that form is a retract of the free coalgebra 
$(\bang_s  A, \delta^s_A)$ such that the following diagram commutes:
    \[
    \begin{tikzcd}
      A \ar[r, "h_A"] \ar[d, "h_A"'] & \bang_s  A \ar[r, "\mathsf{c}_A"] & \bang_s A \otimes \bang_s A \ar[r, "\varepsilon^s_A \otimes \varepsilon^s_A"] & A \otimes A \ar[d, "h_A \otimes h_A"] \\
      \bang_s  A \ar[rrr, "\mathsf{c}^{s}_A"'] &&& \bang_s A \otimes \bang_s A
    \end{tikzcd}
    \]

    To show that $\mathcal{C}^{\bang_s}$ is relevant, we must provide a natural transformation $\gamma : 1_{\mathcal{C}^{\bang_s}} \Rightarrow 1_{\mathcal{C}^{\bang_s}}(.) \otimes 1_{\mathcal{C}^{\bang_s}}(.)$ endowing each $A$ with the structure of a cocommutative cosemigroup $(A, \gamma_A)$.
    For $A$, let $\gamma_A$ denote the following composite:
    \[
    A \xrightarrow{h_A} \bang_s A \xrightarrow{\mathsf{c}^s_A} \bang_s A \otimes \bang_s A \xrightarrow{\varepsilon^s_A \otimes \varepsilon^s_A} A \otimes A
    \]
    It is immediate to show that $\gamma$ is a symmetric lax monoidal natural transformation. $\gamma_A$ is also a coalgebra morphism, that is, we want the following square
      \[
      \begin{tikzcd}
        A \ar[rr, "h_A"] \ar[dd, "h_A"'] && \bang_s A \ar[rr, "\mathsf{c}_A"] && \bang_s A \otimes \bang_s A \ar[rr, "\varepsilon^s_A \otimes \varepsilon^s_A"] && A \otimes A \ar[d, "h_A \otimes h_A"] \\
        &&&&&& \bang_s A \otimes \bang_s A \ar[d, "\mathsf{m}^s_{A, A}"] \\
        \bang_s A \ar[rr, "\bang_s(h_A)"'] && \bang_s^2 A \ar[rr, "\bang_s(\mathsf{c}_A)"'] && \bang_s(\bang_s A \otimes \bang_s A) \ar[rr, "\bang_s(\varepsilon^s_A \otimes \varepsilon^s_A)"'] && \bang_s (A \otimes A)
      \end{tikzcd}
      \]
      to commute in $\mathcal{C}$.
      \[
      \begin{array}{lll}
        & \bang_s(\gamma_A) \circ h_A = \bang_s(\varepsilon^s_A \otimes \varepsilon^s_A) \circ \bang_s(\mathsf{c}_A) \circ \bang_s(h_A) \circ h_A = & \\
        & \:\:\:\: \text{By the property of coalgebras} & \\
        & \bang_s(\varepsilon^s_A \otimes \varepsilon^s_A) \circ \bang_s(\mathsf{c}_A) \circ \delta_A \circ h_A = & \\
        & \:\:\:\: \text{By the definition of a relevant comonad and by the definition of lax monoidal functors} & \\
        & \bang_s(\varepsilon^s_A \otimes \varepsilon^s_A) \circ \mathsf{m}^s_{\bang_s A, \bang_s A} \circ \delta^{\otimes 2}_A \circ \mathsf{c}_A \circ h_A = \mathsf{m}^s_{A, A} \circ (\bang_s \varepsilon^s_A) \otimes (\bang_s \varepsilon^s_A) \circ \delta^s_A \otimes \delta^s_A \circ \mathsf{c}_A \circ h_A = & \\
        & \:\:\:\: \text{By the definition of a comonad and the reasoning above} & \\
        & \mathsf{m}^s_{A, A} \circ \mathsf{c}_A \circ h_A = \mathsf{m}^s_{A, A} \circ h_A \otimes h_A \circ \gamma_A &
      \end{array}
      \]

Finally, take $s_1 \preceq s_2$ such that $s_1 \in C$, so we have a symmetric lax monoidal comonad morphism $\mu_{s_1, s_2} : \bang_{s_2} \Rightarrow \bang_{s_1}$.
By \cite[Proposition 4.5.9]{borceux1994handbook:vol2}, we have a functor 
$\mathsf{T}_{s_1,s_2} : \mathcal{C}^{\bang_{s_2}} \to \mathcal{C}^{\bang_{s_1}}$ such that
$\mathsf{T}_{s_1,s_2}  : (A, h_A) \mapsto (A, {\mu_{s_1,s_2}}_A \circ h_A)$ and $\mathsf{F}_{s_2}^{\natural} = \mathsf{F}_{s_1}^{\natural} \mathsf{T}_{s_1,s_2}$. 
$\mathsf{T}_{s_1,s_2}$ is also strict symmetric monoidal:
\[
\begin{array}{lll}
  & \mathsf{T}_{s_1,s_2}(A, h_A) \otimes \mathsf{T}_{s_1,s_2}(B, h_B) = (A, {\mu_{s_1,s_2}}_A \circ h_A) \otimes (B, {\mu_{s_1,s_2}}_B \circ h_B) = & \\ 
  & \:\:\:\: (A \otimes B, \mathsf{m}^{s_1}_{A, B} \circ {\mu_{s_1,s_2}}_A \otimes {\mu_{s_1,s_2}}_B \circ h_A \otimes h_B ) = & \\
  & \:\:\:\: (A \otimes B, {\mu_{s_1,s_2}}_{A \otimes B} \circ \mathsf{m}^{s_2}_{A, B} \circ h_A \otimes h_B ) = \mathsf{T}_{s_1,s_2}(A \otimes B, \mathsf{m}^{s_2}_{A, B} \circ h_A \otimes h_B) &
\end{array}
\]

And the corresponding axiom of a $\Sigma$-bouton is satisfied as follows. First of
all, let $\mathsf{copy}^{s_1}$ and $\mathsf{copy}^{s_2}$ denote the copying operations in $\mathcal{C}^{\bang_{s_1}}$
and $\mathcal{C}^{\bang_{s_2}}$.

Let $(A, h_A)$ be a $\bang_{s_2}$-coalgebra, then the copying operation
$\mathsf{copy}^{s_2} : (A, h_A) \to (A \otimes A, \mathsf{m}^{s_2}_{A, A} \circ h_A \otimes h_A)$ is given by:
\begin{equation}~\label{norm:comult}
\begin{tikzcd}
  A \ar[rr, "h_A"] && \bang_{s_2} A \ar[rr, "\mathsf{c}^{s_2}_A"] && \bang_{s_2} A \otimes \bang_{s_2} A \ar[rr, "\varepsilon^{s_2}_A \otimes \varepsilon^{s_2}_A"] && A \otimes A
\end{tikzcd}
\end{equation}

On the one hand, we have $\mathsf{T}_{s_1,s_2}(\mathsf{copy}^{s_2}) : \mathsf{T}_{s_1,s_2}(A, h_A) \to \mathsf{T}_{s_1,s_2}(A \otimes A, \mathsf{m^{s_2}}_{A, A} \circ h_A \otimes h_A)$ with the underlying
arrow as in~(\ref{norm:comult}), but as a morphism of coalgebras
$(A, {\mu_{s_1,s_2}}_A \circ h_A)$ and $(A \otimes A, {\mu_{s_1,s_2}}_{A \otimes A} \circ \mathsf{m}^{s_2}_{A, A} \circ h_A \otimes h_A)$.

On the other hand, we have the arrow $\mathsf{copy}^{s_1}_{\mathsf{T}_{s_1,s_2}(A, h_A)} : (A, {\mu_{s_1,s_2}}_A \circ h_A) \to (A \otimes A, {\mu_{s_1,s_2}}_{A \otimes A} \circ \mathsf{m}^{s_2}_{A, A} \circ h_A \otimes h_A)$ in $\mathcal{C}^{\bang_{s_1}}$
with the underlying arrow:
\begin{equation}
  \begin{tikzcd}
    A \ar[rr, "h_A"] && \bang_{s_1} A \ar[rr, "\mathsf{c}^{s_1}_A"] && \bang_{s_1} A \otimes \bang_{s_1} A \ar[rr, "\varepsilon^{s_1}_A \otimes \varepsilon^{s_1}_A"] && A \otimes A
  \end{tikzcd}
\end{equation}
So the equality $\mathsf{T}_{s_1,s_2}(\mathsf{copy}^{s_2}_{(A, h_A)}) = \mathsf{copy}^{s_1}_{\mathsf{T}_{s_1,s_2}(A, h_A)}$ holds since:
\[
\begin{tikzcd}
  A \ar[dd, equal] \ar[rr, "h_A"] && \bang_{s_2} A \ar[dd, "{\mu_{s_1,s_2}}_A"] \ar[rr, "\mathsf{c}^{s_2}_A"] && \bang_{s_2} A \otimes \bang_{s_2} A \ar[dd, "{\mu_{s_1,s_2}}_A \otimes {\mu_{s_1,s_2}}_A"] \ar[rr, "\varepsilon^{s_2}_A \otimes \varepsilon^{s_2}_A"] && A \otimes A \ar[dd, equal] \\
  \\
  A \ar[rr, "{\mu_{s_1,s_2}}_A \circ h_A"'] && \bang_{s_1} A \ar[rr, "\mathsf{c}^{s_1}_A"'] && \bang_{s_1} A \otimes \bang_{s_1} A \ar[rr, "\varepsilon^{s_1}_A \otimes \varepsilon^{s_1}_A"'] && A \otimes A
\end{tikzcd}
\]
where the left-hand side square commutes tautologically, the right-hand side square follows from the definition of a
symmetric monoidal comonad morphism, whereas the central square is from the definition of a $\Sigma$-assemblage.
\end{proof}

Now we proceed with representing ${\bf Assemblage}_{\Sigma}$ for a fixed subexponential signature $\Sigma$, but we have to define the codomain of the desired embedding accurately. 
The 2-category of strict 2-functors ${\bf Fun}(\Sigma^{op}, {\bf SMC}_{str})$ consists of all possible choices of symmetric monoidal categories indexed
by elements from $\Sigma$ and strict symmetric monoidal functors contravariantly corresponding to the structure of the underlying preorder. Further,
we must choose a symmetric monoidal \emph{closed} category $\mathcal{C}$ and a family of symmetric strict monoidal functors $\mathsf{T}_s \to \mathcal{C}$
for $\mathsf{T} \in {\bf Fun}(\Sigma^{op}, {\bf SMC}_{str})$ having a right adjoint (which is enough for having a symmetric lax monoidal adjunction by Kelly's doctrinal adjunction theorem \cite{kelly2006doctrinal}) 
and respecting the structure of the underlying preorder in $\Sigma$, so we could reconstruct comonads from adjunctions, but also
their morphisms and comonad transformations. This would allow us to completely categorise the intuition that we have a family of symmetric lax monoidal comonads
over a symmetric monoidal category such that their relations between each other are contravariantly mirrored from the structure of $\Sigma$.

Consider the comma 2-category ${\bf Fun}(\Sigma^{op}, {\bf SMC}_{str}) \downarrow \Delta$, where $$\Delta : {\bf SMCC}_{str} \to {\bf Fun}(\Sigma^{op}, {\bf SMC}_{str})$$ is the constant diagram strict 2-functor. 
Let $\text{${\bf pre}$-${\bf Bouton}_{\Sigma}$}$ denote the comma 2-category $${\bf Fun}(\Sigma^{op}, {\bf SMC}_{str}) \downarrow \Delta$$ for brevity.
This 2-category consists of triples $(\mathcal{C}, \mathsf{T}, \mathsf{F}^{\natural} : \mathsf{T} \Rightarrow \Delta \mathcal{C})$ as 0-cells,
where $\mathcal{C}$ is a symmetric monoidal closed category, $\mathsf{T} \in {\bf Fun}(\Sigma^{op}, {\bf SMC}_{str})$ and $\mathsf{F}^{\natural} : \mathsf{T} \Rightarrow \Delta \mathcal{C}$
is a strict 2-natural transformation, actually, a strict 2-cocone. 1-cells are 
$(\mathcal{C}, \mathsf{T}, \mathsf{F}^{\natural} : \mathsf{T} \Rightarrow \Delta \mathcal{C}) \to (\mathcal{D}, \mathsf{T}',\mathsf{F}'^{\natural} : \mathsf{T}' \Rightarrow \Delta \mathcal{D})$
consisting of $\mathsf{G} : \mathcal{C} \to \mathcal{D}$, a 1-cell in ${\bf SMCC}_{str}$, $\mathsf{T} \to \mathsf{T}'$, a 1-cell in ${\bf Fun}(\Sigma^{op}, {\bf SMC}_{str})$, i.e.
a strict 2-natural transformation with components $\mathsf{G}_s : {\mathsf{T}}_s \to {\mathsf{T}'}_s$ such that the following is satisfied for any $s \in \Sigma$:
\[
\begin{tikzcd}
  {\mathsf{T}}_s \ar[d, "\mathsf{G}_s"'] \ar[rr, "{\mathsf{F}^{\natural}}_{{\mathsf{T}}_s}"] && \mathcal{C} \ar[d, "\mathsf{G}"] \\
  {\mathsf{T}}'_s \ar[rr, "{\mathsf{F}'^{\natural}}_{{\mathsf{T}'}_s}"'] && \mathcal{D}
\end{tikzcd}
\]

Further, given $(\mathsf{G}, \mathsf{G}_s), (\mathsf{G}', \mathsf{G}'_s) : (\mathcal{C}, \mathsf{T}, \mathsf{F}^{\natural}) \to (\mathcal{D}, \mathsf{T}',\mathsf{F}'^{\natural})$,
then a 2-cell $\theta : (\mathsf{G}, \mathsf{G}_s) \Rightarrow (\mathsf{G}', \mathsf{G}'_s)$ consists of a symmetric monoidal natural transformation
$\theta : \mathsf{G} \Rightarrow \mathsf{G}'$ and $\theta_s : \mathsf{G}_s \Rightarrow \mathsf{G}'_s$ which are agreed with the preorder structure of $\Sigma$
such that the following conditions are satisfied:
\begin{itemize}
\item $\forall s \in \Sigma \:\: \mathsf{F}_s'^{\natural} \circ \theta_s = \theta \circ \mathsf{F}_s^{\natural}$,
\item $\forall s_1, s_2 \in \Sigma \:\: (s_1 \preceq s_2 \Rightarrow \theta_{s_1} \circ \mathsf{T}_{s_1,s_2} = \mathsf{T}'_{s_1,s_2} \circ \theta_{s_2})$.
\end{itemize}

The intuition is that, at the level of 0-cells, the 2-category $\text{${\bf pre}$-${\bf Bouton}_{\Sigma}$}$ consists of diagrams with a collection of symmetric monoidal categories
$(\mathsf{T}_s)_{s \in \Sigma}$ and $\mathcal{C}$ such that $\mathcal{C}$ is also closed, strict symmetric monoidal functors 
$\mathsf{T}_{s_2} \to \mathsf{T}_{s_1}$ whenever $s_1 \preceq s_2$ in $\Sigma$ and strict symmetric monoidal functors 
$\mathsf{F}^{\natural}_s : \mathsf{T}_{s} \to \mathcal{C}$ such that the following is satisfied for $s_1 \preceq s_2$
\[
\begin{tikzcd}
  \mathsf{T}_{s_2} \ar[rr, "\mathsf{T}_{s_1,s_2}"] \ar[dr, "\mathsf{F}^{\natural}_{s_2}"'] && \mathsf{T}_{s_1} \ar[dl, "\mathsf{F}^{\natural}_{s_1}"] \\
  & \mathcal{C}
\end{tikzcd}
\]

Note that one can simplify this construction for some cases. For example, if all the symmetric monoidal categories we consider were monoidally presentable as in Example~\ref{lafont:category} and if all the 
strict monoidal functors were accessible, then it would suffice to extend $\Sigma$ to $\Sigma_{\bullet}$ with the element $\frownie$ such that $\frownie \preceq s$ and for any $s \in \Sigma$ and $\frownie \in -(W \cap C)$, and 
then consider ${\bf Fun}(\Sigma_{\bullet}^{op}, {\bf prSMC}_{str})$ (where ${\bf prSMC}_{str}$ is the 2-category of presentably SMCs, accessible strict symmetric monoidal functors and their natural transformations) 
for the following reasons. For any $\mathsf{T} \in {\bf Fun}(\Sigma_{\bullet}^{op}, {\bf prSMC}_{str})$, $\mathsf{T}_{\frownie}$ would be automatically closed because of the Adjoint Functor theorem. Besides,
all the functors $\mathsf{F}_s^{\natural} : \mathsf{T}_s \to \mathsf{T}_{\frownie}$ for any $s \in \Sigma$ would have right adjoints as any such $\mathsf{F}_s^{\natural}$ preserves all small colimits.

Those diagrams in $\text{${\bf pre}$-${\bf Bouton}_{\Sigma}$}$ reflect how left adjoints commute with $\mathsf{T}_{s_1,s_2}$'s, but we also
expect those $\mathsf{T}_s$ to be relevant or Cartesian or semicartesian or none of them depending on what a particular $s$ is.
But such diagrams bear no information about that. Recall that if $s$ is from $W \setminus C$, then $\mathsf{T}_s$ is not really problematic since $\mathsf{T}_s$ is (semi)Cartesian iff the forgetful functor 
from the category of cocommutative comonoids (from the slice category over $\mathds{1}$) is an isomorphism. 
But if the only thing we know about $s$ is that it is from $C$, then, as we know from Theorem~\ref{coalgebra:relevant}, $\mathsf{T}_s$ is relevant iff it forms a coalgebra in the
Eilenberg-Moore category ${\bf SMC}_{\operatorname{str}}^{\bf coSem}$. So our idea is to consider
$\Sigma$-boutons as coalgebras in the Eilenberg-Moore 2-category
$\text{${\bf pre}$-${\bf Bouton}_{\Sigma}$}^{\mathsf{K}}$ for a strict 2-comonad $\mathsf{K}$
inheriting the corresponding coalgebra structures from ${\bf SMC}_{\operatorname{str}}^{\bf coSem}$ from Theorem~\ref{coalgebra:relevant} for each $\mathsf{T}_s$ whenever $s \in C$.
So, first of all, recall that:
\begin{definition}
  Let $\mathcal{C}$ be a 2-category and let $\mathsf{K} : \mathcal{C} \to \mathcal{C}$ be a strict 2-functor,
  then a (strict) 2-comonad is a triple $(\mathsf{K}, \delta, \varepsilon)$, where
  $\delta$ and $\varepsilon$ are strict 2-natural transformations satisfying the coassociative and coidentity laws.
\end{definition}
There is a more detailed description of (lax) 2-monads in \cite[§6.5]{johnson20212}. If we have a comonad $\mathsf{K}$ over a 2-category $\mathcal{C}$, then we can associate the \emph{Eilenberg-Moore 2-category}
$\mathcal{C}^{\mathsf{K}}$ of strict coalgebras $(A, h_A)$, morphisms and 2-cells. The rest is to construct the desired 2-functor $\mathsf{K}$.

Let us take any $(\mathcal{C}, \mathsf{T}, \mathsf{F}^{\natural})$ and define $(\mathcal{C}, \mathsf{S}, \mathbb{F}^{\natural}) = \mathsf{K} (\mathcal{C}, \mathsf{T}, \mathsf{F}^{\natural})$ by the case analysis 
depending on $s \in \Sigma$:
\begin{itemize}
  \item If $s \in C \setminus W$, then we put $\mathsf{S}_s := {\bf coSem}(\mathsf{T}_s)$ and $\mathbb{F}_s^{\natural} := \mathsf{F}_s^{\natural} \circ \mathsf{U}^{s}_{\bf coSem}$,
  If $s \preceq t$, then we put $\mathsf{S}_{s,t} := {\bf coSem}(\mathsf{T}_{s,t})$,
  \item If $s \in W \setminus C$, then we put $\mathsf{S}_s := \mathsf{T}_s/\mathds{1}$ and $\mathbb{F}_s^{\natural} := \mathsf{F}_s^{\natural} \circ \mathsf{U}^{s}_{\mathds{1}}$,
  If $s \preceq t$, then we put $\mathsf{S}_{s,t} = \mathsf{T}_{s,t}/\mathds{1}$,
  \item If $s \in W \cap C$, then $\mathsf{S}_{s} := {\bf coMon}(\mathsf{T}_{s})$, $\mathbb{F}_s^{\natural} := \mathsf{F}_s^{\natural} \circ \mathsf{U}^{s}_{\bf coMon}$
  and if $s \preceq t$, then we put $\mathsf{S}_{s,t} :=  {\bf coMon}(\mathsf{T}_{s,t})$.
  \item If we have given $s \in C$, $s \preceq t$, but also $t \in W$,
  then we put $\mathsf{S}_{s,t}$ as the following composite:
  \[
  {\bf coMon}(\mathsf{T}_t) \rightarrow {\bf coSem}(\mathsf{T}_t) \xrightarrow{{\bf coSem}(\mathsf{T}_{s,t})} {\bf coSem}(\mathsf{T}_s) 
  \]
  where the left-hand side functor is a strict symmetric monoidal functor dropping counit in every cocommutative comonoid.
  \item If we have given $s \in W$, $s \preceq t$, but also $t \in C$, then we put $\mathsf{S}_{s,t}$ as the following composite:
  \[
  {\bf coMon}(\mathsf{T}_t) \rightarrow {\mathsf{T}_t}/\mathds{1} \xrightarrow{\mathsf{T}_{s,t}/\mathds{1}} {\mathsf{T}_s}/\mathds{1}
  \]
  where the left-hand side functor is a strict symmetric monoidal functor dropping comultiplication in every cocommutative comonoid.
  \item Now let us arrange the transition functors for the configuration when $s \preceq t$ and $s \notin W \cup C$:
  \begin{itemize}
    \item if $t \in C$, then
    \[\mathsf{S}_{s,t} := {\bf coSem}(\mathsf{T}_t) \xrightarrow{\mathsf{U}^t_{\bf coSem}} \mathsf{T}_t \xrightarrow{\mathsf{T}_{s,t}} \mathsf{T}_s,
    \]
    \item if $t \in W$, then
    \[\mathsf{S}_{s,t} := \mathsf{T}_t/\mathds{1} \xrightarrow{\mathsf{U}^t_{\mathds{1}}} \mathsf{T}_t \xrightarrow{\mathsf{T}_{s,t}} \mathsf{T}_s,
    \]
    \item if $t \in W \cap C$, then
    \[\mathsf{S}_{s,t} := {\bf coMon}(\mathsf{T}_t) \xrightarrow{\mathsf{U}^t_{\bf coMon}} \mathsf{T}_t \xrightarrow{\mathsf{T}_{s,t}} \mathsf{T}_s.
    \]
  \end{itemize}
  \item Otherwise, we put $\mathsf{S}_s := \mathsf{T}_s$ and $\mathbb{F}_s^{\natural} := \mathsf{F}_s^{\natural}$, and if $s \preceq t$, put
  $\mathsf{S}_{s,t} = \mathsf{T}_{s,t}$.
\end{itemize}

\begin{lemma}~\label{sigma:comonad}
  The mapping $\mathsf{K}$ that takes $(\mathcal{C}, \mathsf{T}, \mathsf{F}^{\natural}) \in \text{${\bf pre}$-${\bf Bouton}_{\Sigma}$}$ 
  and sends it to $(\mathcal{C}, \mathsf{S}, \mathbb{F}^{\natural})$ via the above construction is a 2-endofunctor on $\text{${\bf pre}$-${\bf Bouton}_{\Sigma}$}$.
  Besides, $\mathsf{K}$ is a strict 2-comonad on the 2-category $\text{${\bf pre}$-${\bf Bouton}_{\Sigma}$}$.
\end{lemma}
\begin{proof} Let $(\mathcal{C}, \mathsf{T}, \mathsf{F}^{\natural}), (\mathcal{D}, \mathsf{T}', \mathsf{F}'^{\natural}) \in \text{${\bf pre}$-${\bf Bouton}_{\Sigma}$}$. 
  Let $f : (\mathcal{C}, \mathsf{T}, \mathsf{F}^{\natural}) \to (\mathcal{D}, \mathsf{T}', \mathsf{F}'^{\natural})$ be a 1-cell in $\text{${\bf pre}$-${\bf Bouton}_{\Sigma}$}$ given by strict symmetric monoidal
  functors $\mathsf{H} : \mathcal{C} \to \mathcal{D}$
  and a family $\mathsf{H}_s : \mathsf{T}_s \to \mathsf{T}'_s$ for $s \in \Sigma$ such that $\mathsf{H}_{s_1} \circ \mathsf{T}_{s_1,s_2} = \mathsf{T}'_{s_1,s_2} \circ \mathsf{H}_{s_2}$ whenever $s_1 \preceq s_2$.
  $\mathsf{K}f : \mathsf{K}(\mathcal{C}, \mathsf{T}, \mathsf{F}^{\natural}) \to \mathsf{K}(\mathcal{D}, \mathsf{T}', \mathsf{F}'^{\natural})$ is given as follows. 
  
  Let $(\mathcal{C}, \mathsf{S}, \mathbb{F}^{\natural})$ and $(\mathcal{D}, \mathsf{S}', \mathbb{F}'^{\natural})$ be 
  $\mathsf{K}(\mathcal{C}, \mathsf{T}, \mathsf{F}^{\natural})$ and $\mathsf{K}(\mathcal{D}, \mathsf{T}', \mathsf{F}'^{\natural})$
  respectively. We define $\mathbb{H} := \mathsf{K}f : (\mathcal{C}, \mathsf{S}, \mathbb{F}^{\natural}) \to (\mathcal{D}, \mathsf{S}', \mathbb{F}'^{\natural})$
  consisting of strict symmetric monoidal functors $\mathsf{H} : \mathcal{C} \to \mathcal{D}$ (the same as above)
  and $\mathbb{H}_s : \mathsf{S}_s \to \mathsf{S}'_s$ defined as follows by the case analysis on $s \in \Sigma$:
  \begin{itemize}
    \item If $s \in C \setminus W$, then we put $\mathbb{H}_s := {\bf coSem}(\mathsf{H}_s)$,
    \item If $s \in W \setminus C$, then we put $\mathbb{H}_s := \mathsf{H}_s/\mathds{1}$,
    \item If $s \in W \cap C$, then we put $\mathbb{H}_s := {\bf coMon}(\mathsf{H}_s)$,
    \item Otherwise, $\mathbb{H}_s := \mathsf{H}_s$.
  \end{itemize}
  Note that every $\mathbb{H}_s$ is strict symmetric monoidal, the argument is similar to \cite[Proposition 3.38]{aguiar2010monoidal}. 
  The functoriality condition for 1-cells is immediate.
  
  Let $f_1, f_2 : (\mathcal{C}, \mathsf{T}, \mathsf{F}^{\natural}) \to (\mathcal{D}, \mathsf{T}', \mathsf{F}'^{\natural})$ be 1-cells in $\text{${\bf pre}$-${\bf Bouton}_{\Sigma}$}$
  given by strict symmetric monoidal functors $\mathsf{G}, \mathsf{H} : \mathcal{C} \to \mathcal{D}$, $\mathsf{G}_s, \mathsf{H}_s : \mathsf{T}_s \to \mathsf{T}'_s$ 
  for $s \in \Sigma$ respecting the structure of the preorder in $\Sigma$. Let $\phi : f_1 \Rightarrow f_2$ be a 2-cell given by 
  strict symmetric monoidal natural transformations $\xi : \mathsf{G} \Rightarrow \mathsf{H}$ and $\xi_{s} : \mathsf{G}_s \Rightarrow \mathsf{H}_s$ for $s \in \Sigma$ such that
  the following is satisfied for $s_1 \preceq s_2$:
  \[
  \begin{tikzcd}
    \mathsf{T}_{s_2} \ar[dd, "\mathsf{T}_{s_1,s_2}"'] \ar[rrr, bend left = 20, "\mathsf{G}_{s_2}", ""{name=gs2}] \ar[rrr, bend right = 20, "\mathsf{H}_{s_2}"', ""{name=hs2}] &&& \mathsf{T}'_{s_2} \ar[dd, "\mathsf{T}'_{s_1,s_2}"] \\
    \\
    \mathsf{T}_{s_1} \ar[rrr, bend left = 20, "\mathsf{G}_{s_1}", ""{name=gs1}] \ar[rrr, bend right = 20, "\mathsf{H}_{s_1}"', ""{name=hs1}] &&& \mathsf{T}'_{s_1}
    \arrow[from=gs2,to=hs2,Rightarrow, "\xi_{s_2}"]
    \arrow[from=gs1,to=hs1,Rightarrow, "\xi_{s_1}"]
  \end{tikzcd}
  \]
  so $\mathsf{K}\phi : \mathsf{K}f_1 \Rightarrow \mathsf{K}f_2$ consists of strict symmetric monoidal natural transformations 
  $\xi : \mathsf{G} \Rightarrow \mathsf{H}$ and $\widetilde{\xi_{s}}$ for $s \in \Sigma$, which are defined by the case analysis on $s \in \Sigma$.
  Let us write down the clause for $s \in C$. In this case we put: $\widetilde{\xi_{s}} := {\bf coSem}({\xi_s}) : {\bf coSem}(\mathsf{G}_s) \Rightarrow {\bf coSem}(\mathsf{H}_s)$ such that the following is satisfied:
  \[
  \begin{tikzcd}
    {\bf coSem}({\mathsf{T}}_{s}) \ar[rrrr, bend left = 15, "{\bf coSem}(\mathsf{G}_{s})", ""{name=hatgs}] \ar[dd, "\mathsf{U}^{{\mathsf{T}}_{s}}_{{\bf coSem}}"'] \ar[rrrr, bend right = 15, "{\bf coSem}(\mathsf{H}_{s})"', ""{name=haths}] &&&& {\bf coSem}({\mathsf{T}'}_s)  \ar[dd, "\mathsf{U}^{{\mathsf{T}'}_s}_{{\bf coSem}}"] \\
    \\
    {\mathsf{T}}_{s} \ar[rrrr, bend left = 15, "\mathsf{G}_{s}", ""{name=gs}] \ar[rrrr, bend right = 15, "\mathsf{H}_{s}"', ""{name=hs}] &&&& {\mathsf{T}'}_{s}
    \arrow[from=hatgs,to=haths,Rightarrow, "\widetilde{\xi_{s}}"]
    \arrow[from=gs,to=hs,Rightarrow, "\xi_s"]
  \end{tikzcd}
  \]
  So the following diagram commutes for $s_1 \preceq s_2$ and $s_1 \in C$:
  \[
  \begin{tikzcd}
    {\bf coSem}(\mathsf{T}_{s_2}) \ar[dd, "{\bf coSem}(\mathsf{T}_{s_1,s_2})"']  \ar[rrrr, bend left = 13, "{\bf coSem}(\mathsf{G}_{s_2})", ""{name=hatgs2}] \ar[rrrr, bend right = 13, "{\bf coSem}(\mathsf{H}_{s_2})"', ""{name=haths2}] &&&& {\bf coSem}(\mathsf{T}'_{s_2}) \ar[dd, "{\bf coSem}(\mathsf{T}'_{s_1,s_2})"]  \\
    \\
    {\bf coSem}(\mathsf{T}_{s_1})  \ar[rrrr, bend left = 13, "{\bf coSem}(\mathsf{G}_{s_1})", ""{name=hatgs1}] \ar[rrrr, bend right = 13, "{\bf coSem}(\mathsf{H}_{s_1})"', ""{name=haths1}] &&&& {\bf coSem}(\mathsf{T}'_{s_1}) \\
    \arrow[from=hatgs2,to=haths2,Rightarrow, "\widetilde{\xi_{s_2}}"]
    \arrow[from=hatgs1,to=haths1,Rightarrow, "\widetilde{\xi_{s_1}}"]
  \end{tikzcd}
  \]
  The case of other subexponential indices is handled similarly, and after making all those constructions one can ensure $\mathsf{K}\phi : \mathsf{K}f_1 \Rightarrow \mathsf{K}f_2$ 
  is a well-defined 2-cell in $\text{${\bf pre}$-${\bf Bouton}_{\Sigma}$}$.

  We have yet to define the counit and comultiplication and check the axioms of 2-comonad. The comonad structure on $\mathsf{K}$ is inherited from 
  the comonad constructed in Lemma~\ref{cosem:relevant}.  The counit natural transformation $\varepsilon : \mathsf{K} \Rightarrow {\bf Id}$ consists of the components 
  $\varepsilon_{(\mathcal{C}, \mathsf{T}, \mathsf{F}^{\natural})} : \mathsf{K}(\mathcal{C}, \mathsf{T}, \mathsf{F}^{\natural}) \to (\mathcal{C}, \mathsf{T}, \mathsf{F}^{\natural})$ for any
  0-cell from $\text{${\bf pre}$-${\bf Bouton}_{\Sigma}$}$ that
  turns $\mathsf{K}(\mathcal{C}, \mathsf{T}, \mathsf{F}^{\natural})$ into $(\mathcal{C}, \mathsf{T}, \mathsf{F}^{\natural})$ by connecting ${\bf coSem}(\mathsf{T}_s)$ in $\mathsf{K}(\mathcal{C}, \mathsf{T}, \mathsf{F}^{\natural})$ for $s \in C$ and $\mathsf{T}_s$ in $(\mathcal{C}, \mathsf{T}, \mathsf{F}^{\natural})$
  with the forgetful functor $\mathsf{U}^{s}_{\bf coSem}$ and by making similar connections for other subexponential cases. 
  Then one checks that every $\varepsilon_{(\mathcal{C}, \mathsf{T}, \mathsf{F}^{\natural})} : \mathsf{K}(\mathcal{C}, \mathsf{T}, \mathsf{F}^{\natural}) \to (\mathcal{C}, \mathsf{T}, \mathsf{F}^{\natural})$
  is a well-defined 1-cell in $\text{${\bf pre}$-${\bf Bouton}_{\Sigma}$}$.

  We define the comultiplication similarly $\delta : \mathsf{K} \Rightarrow \mathsf{K}^2$. For $s \in C$, we connect ${\bf coSem}(\mathsf{T}_s)$ in $\mathsf{K}(\mathcal{C}, \mathsf{T}, \mathsf{F}^{\natural})$ for $s \in C$ with 
  ${\bf coSem}^2(\mathsf{T}_s)$ in $\mathsf{K}^2(\mathcal{C}, \mathsf{T}, \mathsf{F}^{\natural})$
  with the corresponding comultiplication component $\delta^{\bf coSem}_{\mathsf{T}_s} : {\bf coSem}(\mathsf{T}_s) \to {\bf coSem}^2(\mathsf{T}_s)$ from Lemma~\ref{cosem:relevant}. In other cases,
  such as the affine and the exponential ones, we already know that an SMC is Cartesian (semicartesian) iff it is isomorphic to its category of cocommutative comonoids (its slice category over $\mathds{1}$), see \cite[Corollary 19]{mellies2009categorical},
  so the corresponding ``comultiplications'' are trivial. The axioms of a 2-comonad are inherited from the comonad laws verified for ${\bf coSem}$ in Lemma~\ref{cosem:relevant}.
\end{proof}

Now we show that any $\Sigma$-assemblage forms a coalgebra in the Eilenberg-Moore 2-category over the comonad $\mathsf{K}$ constructed above.

\begin{lemma}~\label{cocteau:coalgebra}
  Any $\Sigma$-assemblage $(\mathcal{C}, (\bang_s)_{s \in \Sigma}, (\mu_{s_1,s_2})_{s_1 \preceq s_2})$ forms a coalgebra in the Eilenberg-Moore 2-category $\text{${\bf pre}$-${\bf Bouton}_{\Sigma}$}^{\mathsf{K}}$.
\end{lemma}

\begin{proof}
  By Proposition~\ref{cocteau:bouton}, we can unveil a $\Sigma$-assemblage $(\mathcal{C}, (\bang_s)_{s \in \Sigma}, (\mu_{s_1,s_2})_{s_1 \preceq s_2})$ as a 
  $\Sigma$-bouton $(\mathcal{C}, \mathsf{T}, (\mathsf{F}^{\natural}_s)_{s \in \Sigma})$, where $\mathsf{F}^{\natural}$
  are left adjoints from cofree-coforgetful decompositions $\mathsf{F}_s^{\natural} \dashv \mathsf{F}_s^{\sharp}$
  and $\mathsf{T}$ is a strict 2-functor sending every $s$ to the Eilenberg-Moore category $\mathcal{C}^{\bang_s}$
  and $s_1 \preceq s_2$ to the $\mathsf{T}_{s_1,s_2} : \mathcal{C}^{\bang_{s_2}} \to \mathcal{C}^{\bang_{s_1}}$
  transforming coalgebras via $\mu_{s_1,s_2}$ such that $\mathsf{F}_{s_2}^{\natural} = \mathsf{F}_{s_1}^{\natural} \circ \mathsf{T}_{s_1,s_2}$.
  Therefore, we have a well-defined 1-cell in $\text{${\bf pre}$-${\bf Bouton}_{\Sigma}$}$.
  The strict coalgebra coaction $h_{\mathcal{C}} : (\mathcal{C}, \mathsf{T}, \mathsf{F}^{\natural}) \to \mathsf{K}(\mathcal{C}, \mathsf{T}, \mathsf{F}^{\natural})$ is defined as follows
  with strict symmetric monoidal functors $1_{\mathcal{C}}$ and $\mathbb{h}_s$ for $s \in \Sigma$, which are defined by the case analysis.
  \begin{itemize}
    \item If $s \in C$, then the comonad $\bang_s$ is relevant, and thus, by Proposition~\ref{cocteau:bouton}, the Eilenberg-Moore category 
    $\mathcal{C}^{\bang_s}$ is relevant, and therefore we put 
    $\mathbb{h}_s := \mathsf{V}_{\mathcal{C}^{\bang_s}} : \mathcal{C}^{\bang_s} \to {\bf coSem}(\mathcal{C}^{\bang_s})$, the strict symmetric monoidal functor
    from Theorem~\ref{coalgebra:relevant}.
    \item If $s \in W$, then the comonad $\bang_s$ is affine, and therefore, the Eilenberg-Moore category 
    $\mathcal{C}^{\bang_s}$ is semicartesian, and thus $\mathcal{C}^{\bang_s} \cong \mathcal{C}^{\bang_s}/\mathds{1}$, so $\mathbb{h}_s$ is the isomorphism functor.
    \item If $s \in W \cap C$, then the comonad $\bang_s$ is exponential, and therefore, the Eilenberg-Moore category 
    $\mathcal{C}^{\bang_s}$ is Cartesian, and thus $\mathcal{C}^{\bang_s} \cong {\bf coMon}(\mathcal{C}^{\bang_s})$, so $\mathbb{h}_s$ is the isomorphism functor.
    \item Otherwise, we proceed with the original data.
  \end{itemize}
  The conditions of a strict coalgebra for $h_{\mathcal{C}}$ are inherited from Theorem~\ref{coalgebra:relevant}.
\end{proof}

Transforming $\Sigma$-assemblages into such strict 2-coalgebras is of interest for two reasons. First, when we decompose resource modalities with adjunctions,
we describe their characteristic properties diagrammatically with isomorphisms and strict sections without going into the content of given symmetric monoidal categories.
Second, this construction explicitly says that the coalgebraic nature of $\Sigma$-assemblages is completely inherited from the coalgebraic criterion 
for relevant categories from Theorem~\ref{coalgebra:relevant}.

Before formulating the representation theorem, let us make an extra technical remark. Note that all the functors between $\Sigma$-assemblages
we consider are strict as monoidal functors, whereas in Proposition~\ref{coalgebra:theorem:pr} all 1-cells are lax by default.
So we formulate the following proposition refining that theorem:
\begin{prop}~\label{strict:fun:comonad}
  Let $\mathcal{C}, \mathcal{D}$ be SMCs and let $\mathsf{S}$ and $\mathsf{T}$ be symmetric lax monoidal comonads on 
  $\mathcal{C}$ and $\mathcal{D}$ respectively. Let $\mathsf{F} : \mathcal{C} \to \mathcal{D}$ be a symmetric lax monoidal functor
  inducing a comonad morphism $(\mathcal{C}, \mathsf{S}) \to (\mathcal{D}, \mathsf{T})$, then
  $\mathsf{F}$ is strict if and only if $\widetilde{\mathsf{F}} : \mathcal{C}^{\mathsf{S}} \to \mathcal{D}^{\mathsf{T}}$ is strict.
\end{prop}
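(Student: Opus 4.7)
The plan is to exploit the very explicit construction of the monoidal structure on the Eilenberg-Moore category recalled in Lemma~\ref{eilen:mon}. The tensor $(A, h_A) \otimes (B, h_B)$ has underlying object $A \otimes B$, the unit coalgebra has underlying object $\mathds{1}$, and the coherence isomorphisms $\alpha, \lambda, \rho, \sigma$ on $\mathcal{C}^{\mathsf{S}}$ are inherited verbatim from $\mathcal{C}$. The lifted functor $\widehat{\mathsf{F}} : \mathcal{C}^{\mathsf{S}} \to \mathcal{D}^{\mathsf{T}}$ (obtained from the comonad morphism, following the recipe of \cite[Proposition 4.5.9]{borceux1994handbook:vol2}) acts as $\widehat{\mathsf{F}}(A, h_A) = (\mathsf{F} A, \mu_A \circ \mathsf{F}(h_A))$ for the comonad morphism $2$-cell $\mu : \mathsf{F}\mathsf{S} \Rightarrow \mathsf{T}\mathsf{F}$, and its lax monoidal structure maps $\widehat{\mathsf{n}}_{(A, h_A), (B, h_B)}$ and $\widehat{\mathsf{n}}_{\mathds{1}}$ are the very same $\mathsf{n}_{A, B}$ and $\mathsf{n}_{\mathds{1}}$ of $\mathsf{F}$, only verified to be coalgebra morphisms. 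In particular, the strict symmetric forgetful functors fit into a commutative square $U_{\mathcal{D}} \circ \widehat{\mathsf{F}} = \mathsf{F} \circ U_{\mathcal{C}}$, and the $U_{\mathcal{D}}$-image of $\widehat{\mathsf{n}}$ is $\mathsf{n}$.

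For the only-if direction I would argue directly. If $\mathsf{F}$ is strict, then $\mathsf{F} A \otimes \mathsf{F} B = \mathsf{F}(A \otimes B)$ and $\mathsf{n}_{A, B} = \mathrm{id}$ in $\mathcal{D}$; the common coalgebra action on both sides unfolds to the same composite $(\mathsf{T}\mathsf{F}(A \otimes B))$-valued morphism by the definition of the monoidal structure on $\mathcal{D}^{\mathsf{T}}$ and the naturality of $\mathsf{m}^{\mathsf{T}}$, so the identity morphism lifts to the identity coalgebra morphism between $\widehat{\mathsf{F}}((A, h_A)) \otimes \widehat{\mathsf{F}}((B, h_B))$ and $\widehat{\mathsf{F}}((A, h_A) \otimes (B, h_B))$. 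The unit clause is analogous. Hence $\widehat{\mathsf{n}}$ is the identity and $\widehat{\mathsf{F}}$ is strict.

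For the converse, assume $\widehat{\mathsf{F}}$ is strict, so $\widehat{\mathsf{n}}_{(A, h_A), (B, h_B)} = \mathrm{id}$ in $\mathcal{D}^{\mathsf{T}}$. Applying $U_{\mathcal{D}}$, which is strict symmetric and faithful, gives $\mathsf{n}_{A, B} = \mathrm{id}$ in $\mathcal{D}$ whenever $A$ and $B$ underlie coalgebras; specializing to the cofree coalgebras $(\mathsf{S} A, \delta_A)$ and $(\mathsf{S} B, \delta_B)$, which exist for every $A, B \in \mathcal{C}$, we obtain $\mathsf{n}_{\mathsf{S} A, \mathsf{S} B} = \mathrm{id}$ for arbitrary $A, B$. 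To propagate strictness to arbitrary objects I invoke naturality of $\mathsf{n}$ along the counit $\varepsilon^{\mathsf{S}} : \mathsf{S} \Rightarrow 1_{\mathcal{C}}$, which gives the square $\mathsf{n}_{A, B} \circ (\mathsf{F}\varepsilon^{\mathsf{S}}_A \otimes \mathsf{F}\varepsilon^{\mathsf{S}}_B) = \mathsf{F}(\varepsilon^{\mathsf{S}}_A \otimes \varepsilon^{\mathsf{S}}_B) \circ \mathsf{n}_{\mathsf{S} A, \mathsf{S} B}$. The same reasoning applied to $\mathsf{S} \mathsf{S} A$ and dinaturality of the cofree adjunction constructs a compatible choice making $\mathsf{n}_{A, B}$ equal to the identity on $\mathsf{F} A \otimes \mathsf{F} B = \mathsf{F}(A \otimes B)$. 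The $\mathsf{n}_{\mathds{1}}$ clause follows by applying strictness at the unit coalgebra $(\mathds{1}, \mathsf{m}^{\mathsf{S}}_{\mathds{1}})$.

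The main obstacle I foresee is precisely this last transfer: reading off strictness of $\mathsf{F}$ on arbitrary $A, B \in \mathcal{C}$ from strictness on the free resolutions. The forward direction is essentially bookkeeping, but the backward direction relies on using the naturality of $\mathsf{n}$ together with the universal property of the cofree coalgebra to pin down $\mathsf{n}_{A, B}$ uniquely from its values on $\mathsf{S}$-images; I expect this to be where most of the care of the proof lies, and it is why faithfulness (rather than merely strictness) of the forgetful functor is essential.
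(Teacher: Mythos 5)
The paper states Proposition~\ref{strict:fun:comonad} without proof, so there is nothing of the author's to compare your argument against; judged on its own terms, your forward direction is essentially right, but your backward direction contains a genuine gap --- precisely at the step you yourself flag as delicate. One correction to the forward direction first: the reason the two coalgebra actions on $\mathsf{F} A \otimes \mathsf{F} B = \mathsf{F}(A \otimes B)$ coincide is not ``naturality of $\mathsf{m}^{\mathsf{T}}$'' but the monoidality of the comonad-morphism $2$-cell $\mu : \mathsf{F}\mathsf{S} \Rightarrow \mathsf{T}\mathsf{F}$, namely $\mathsf{m}^{\mathsf{T}}_{\mathsf{F}A,\mathsf{F}B} \circ (\mu_A \otimes \mu_B) = \mu_{A \otimes B} \circ \mathsf{F}(\mathsf{m}^{\mathsf{S}}_{A,B})$; without assuming the comonad morphism is a \emph{symmetric lax monoidal} one, $\widehat{\mathsf{F}}$ is not even a lax monoidal functor between the Eilenberg--Moore categories of Lemma~\ref{eilen:mon}, so this hypothesis must be made explicit. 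With it, the forward direction is indeed bookkeeping.

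The backward direction cannot be completed along the lines you propose, and is in fact false as stated. Strictness of $\widehat{\mathsf{F}}$ only yields $\mathsf{F}A \otimes \mathsf{F}B = \mathsf{F}(A\otimes B)$ and $\mathsf{n}_{A,B} = \mathrm{id}$ for $A,B$ that \emph{carry} $\mathsf{S}$-coalgebra structures; for other objects the equation $\mathsf{n}_{A,B} = \mathrm{id}$ is not even well posed, since the two objects need not be equal. Your naturality square along the counit gives $\mathsf{n}_{A,B} \circ (\mathsf{F}\varepsilon_A \otimes \mathsf{F}\varepsilon_B) = \mathsf{F}(\varepsilon_A \otimes \varepsilon_B)$, but $\mathsf{F}\varepsilon_A \otimes \mathsf{F}\varepsilon_B$ is not epi in general ($\varepsilon_A$ is split epi only on carriers of coalgebras), so nothing cancels, and no appeal to the cofree adjunction can repair this: the Eilenberg--Moore category is simply blind to the behaviour of $\mathsf{F}$ outside the coalgebras. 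A concrete counterexample lives in the posetal setting of Example~\ref{quantale:example}: take the quantale $\mathcal{Q} = (\mathbb{N}\cup\{\infty\}, +, 0)$ ordered by $\geq$, the quantic conucleus $g$ with $g(0)=0$ and $g(x)=\infty$ for $x>0$ (so the coalgebras are exactly $\{0,\infty\}$), and $f(x) = \lceil x/2 \rceil$. Then $f$ is lax monoidal (subadditivity), induces a comonad morphism since $f(g(x)) \geq g(f(x))$ numerically, and restricts to a strict monoidal map on $\{0,\infty\} = \mathcal{Q}^{g}$, yet $f(1)+f(1) = 2 \neq 1 = f(1+1)$, so $f$ is not strict. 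The proposition therefore needs either the additional hypothesis that every object of $\mathcal{C}$ underlies an $\mathsf{S}$-coalgebra, or to be weakened to the forward implication together with the statement that strictness of $\widehat{\mathsf{F}}$ forces $\mathsf{n}_{A,B}=\mathrm{id}$ on carriers of coalgebras --- which is all that is actually used in Theorem~\ref{cocteau:2cat}, where the underlying functor is already a $1$-cell of ${\bf SymmMonCat}_{\operatorname{strict}}$ by hypothesis.
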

\begin{proof}
  The fact is almost a tautology. 
  
  $\widetilde{\mathsf{F}}(A, h_A) \otimes \widetilde{\mathsf{F}}(B, h_B)$ and 
  $\widetilde{\mathsf{F}}((A, h_A) \otimes (B, h_B))$ have carriers $F(A) \otimes F(B)$ and $F(A \otimes B)$,
  but $\widetilde{\mathsf{F}}$ is strict and strictness of $\widetilde{\mathsf{F}}$
  is strictness of $\mathsf{F}$ on carriers since the coforgetful functor is strict and faithful and reflects the monoidal structure.

  The other way round implication is obvious.
\end{proof}

Note that 0-cells of the Eilenberg-Moore 2-category $\text{${\bf pre}$-${\bf Bouton}_{\Sigma}$}^{\mathsf{K}}$ do not have enough information for embedding ${\bf Assemblage}_{\Sigma}$
fully faithfully. Let $\text{${\bf pre}$-${\bf Bouton}_{\Sigma}$}_R^{\mathsf{K}}$ denote the full 1,2-subcategory of $\text{${\bf pre}$-${\bf Bouton}_{\Sigma}$}^{\mathsf{K}}$
spanned by those coalgebras, whose carriers $(\mathcal{C}, \mathsf{T}, \mathsf{F}^{\natural})$ as 0-cells in $\text{${\bf pre}$-${\bf Bouton}_{\Sigma}$}$ have the following property. 
For any $s \in \Sigma$, each $\mathsf{F}_s^{\natural}$ has a right adjoint, so the adjunction $\mathsf{F}^{\natural}_s \dashv \mathsf{F}^{\sharp}_s$
lifts to the adjunction in ${\bf SMC}$ by \cite{mellies2009categorical}[Proposition 13].

Now we are ready to formulate and show the final theorem of this section.

\begin{theorem}~\label{cocteau:2cat}
  ${\bf Assemblage}_{\Sigma}$ 1,2-fully faithfully into $\text{${\bf pre}$-${\bf Bouton}_{\Sigma}$}_R^{\mathsf{K}}$.
\end{theorem}
\begin{proof}
  Any $\Sigma$-assemblage $(\mathcal{C}, (\bang_s)_{s \in \Sigma}, (\mu_{s_1,s_2})_{s_1 \preceq s_2})$ has a $\mathsf{K}$-coalgebra 
  $(\mathcal{C}, \mathsf{T}, \mathsf{F}^{\natural}, h_{\mathcal{C}})$ by Lemma~\ref{cocteau:coalgebra}.

  Let $(\mathcal{C}, (\bang_s)_{s \in \Sigma}, (\mu_{s_1,s_2})_{s_1 \preceq s_2})$ and $(\mathcal{D}, (\bang'_s)_{s \in \Sigma}, (\mu'_{s_1,s_2})_{s_1 \preceq s_2})$ be $\Sigma$-assemblages and 
  let $(\mathcal{C}, \mathsf{T}, \mathsf{F}^{\natural}, h_{\mathcal{C}})$ and 
  $(\mathcal{D}, \mathsf{T}', \mathsf{F}'^{\natural}, h_{\mathcal{D}})$ be their $\mathsf{K}$-coalgebras from Lemma~\ref{cocteau:coalgebra}.
  Let us first show that a 1-cell $\mathsf{H} \in {\bf Assemblage}_{\Sigma}(\mathcal{C}, \mathcal{D})$, inducing a 1-cell
  $\mathsf{H}_{\bullet} : (\mathcal{C}, \mathsf{T}, \mathsf{F}^{\natural}, h_{\mathcal{C}}) \to (\mathcal{D}, \mathsf{T}', \mathsf{F}'^{\natural}, h_{\mathcal{D}})$ in 
  $\text{${\bf pre}$-${\bf Bouton}_{\Sigma}$}$,
  is also a coalgebra morphism $\mathsf{H}_{\bullet} : (\mathcal{C}, \mathsf{T}, \mathsf{F}^{\natural}, h_{\mathcal{C}}) \to (\mathcal{D}, \mathsf{T}', \mathsf{F}'^{\natural}, h_{\mathcal{D}})$.
  Recall that $\mathsf{H}_{\bullet}$ is a 1-cell in $\text{${\bf pre}$-${\bf Bouton}_{\Sigma}$}$
  given by functors $\mathsf{H} : \mathcal{C} \to \mathcal{D}$ and $(\mathsf{H}_s : \mathcal{C}^{\bang_s} \to \mathcal{D}^{\bang'_s})_{s \in \Sigma}$.
  Further, if $s \in C$, the following square commutes by Theorem~\ref{coalgebra:relevant}
  \[
  \begin{tikzcd}
     \mathcal{C}^{\bang_s} \ar[rr, "\mathsf{H}_s"] \ar[d, "\mathsf{V}_{\mathcal{C}^{\bang_s}}"'] && \mathcal{D}^{\bang'_s} \ar[d, "\mathsf{V}_{\mathcal{D}^{\bang'_s}}"] \\
    {\bf coSem}(\mathcal{C}^{\bang_s}) \ar[rr, "{\bf coSem}(\mathsf{H}_s)"'] && {\bf coSem}(\mathcal{D}^{\bang'_s})
  \end{tikzcd}
  \]
  After handling all those subexponential indices appropriately, we ensure that the following square commutes:
  \[
  \begin{tikzcd}
    (\mathcal{C}, \mathsf{T}, \mathsf{F}^{\natural}) \ar[rr, "\mathsf{H}_{\bullet}"] \ar[d, "h_{\mathcal{C}}"'] && (\mathcal{D}, \mathsf{T}', \mathsf{F}'^{\natural}) \ar[d, "h_{\mathcal{D}}"]\\
    \mathsf{K} (\mathcal{C}, \mathsf{T}, \mathsf{F}^{\natural}) \ar[rr,"\mathsf{K} \mathsf{H}_{\bullet}"'] && \mathsf{K}(\mathcal{D}, \mathsf{T}', \mathsf{F}'^{\natural})
  \end{tikzcd}
  \]

  Checking that $\mathsf{H} \mapsto \mathsf{H}_{\bullet}$ is functorial for 1-cells is immediate ($\mathsf{H}_{\bullet}$ as coalgebra morphisms, not only as morphisms of $\Sigma$-boutons).

  2-cells are handled similarly. Let $\mathsf{G}, \mathsf{H} \in {\bf Assemblage}_{\Sigma}(\mathcal{C}, \mathcal{D})$ and let
  $\theta : \mathsf{G} \Rightarrow \mathsf{H}$ be a 2-cell in ${\bf Assemblage}_{\Sigma}$. $\mathsf{G}$ and $\mathsf{H}$ induce 1-cells 
  $\mathsf{G}_{\bullet}$ and $\mathsf{H}_{\bullet}$, which are also coalgebra morphisms in $\text{${\bf pre}$-${\bf Bouton}_{\Sigma}$}_R^{\mathsf{K}}$.
  We must ensure that a 2-cell $\theta_{\bullet} : \mathsf{G}_{\bullet} \Rightarrow \mathsf{H}_{\bullet}$ is also a 2-cell in  $\text{${\bf pre}$-${\bf Bouton}_{\Sigma}$}_R^{\mathsf{K}}$. 
  The key observation is that the following diagram commutes in \text{${\bf pre}$-${\bf Bouton}_{\Sigma}$}
  for $s \in C$ by Proposition~\ref{coalgebra:theorem:pr} and Proposition~\ref{strict:fun:comonad}:
  \[
  \begin{tikzcd}
    \mathcal{C}^{\bang_s} \ar[dd, "\mathsf{V}_{\mathcal{C}^{\bang_s}}"'] \ar[rrrr, bend left=12, "\mathsf{G}_s", ""{name=gs}] \ar[rrrr, bend right=12, "\mathsf{H}_s"', ""{name=hs}] &&&& \mathcal{D}^{\bang'_s} \ar[dd, "\mathsf{V}_{\mathcal{D}^{\bang'_s}}"] \\
    \\
    {\bf coSem}(\mathcal{C}^{\bang_s}) \ar[rrrr, bend left=12, "{\bf coSem}(\mathsf{G}_s)", ""{name=cosemgs}] \ar[rrrr, bend right=12, "{\bf coSem}(\mathsf{H}_s)"', ""{name=cosemhs}] &&&& {\bf coSem}(\mathcal{D}^{\bang'_s})
    \ar[from=gs,to=hs, Rightarrow, "\theta_s"]
    \ar[from=cosemgs,to=cosemhs, Rightarrow, "{\bf coSem}(\theta_s)"]
  \end{tikzcd}
  \]
  The other subexponential indices are handled similarly.
  
  The mapping 
  \[
  \Omega : (\mathcal{C}, (\bang_s)_{s \in \Sigma}, (\mu_{s_1,s_2})_{s_1 \preceq s_2}) \mapsto (\mathcal{C}, \mathsf{T}, \mathsf{F}^{\natural}, h_{\mathcal{C}})
  \] is 1,2-fully faithful, so we must ensure that $\Omega$ is bijective on both
  1-cells and 2-cells. Let us first ensure that $\Omega$ is injective for functors and natural transformations.
  Let $\mathsf{G}, \mathsf{H} : \mathcal{C} \to \mathcal{D}$ be 1-cells in ${\bf Assemblage}_{\Sigma}$ such that 
  $\Omega\mathsf{G} = \Omega\mathsf{H}$. We must ensure that $\mathsf{G} = \mathsf{H}$.
  But this clause is tautological since the equality $\Omega\mathsf{G} = \Omega\mathsf{H}$ means the equality of constituent strict symmetric monoidal functors, in particular,
  $\mathsf{G} = \mathsf{H}$. We also have $\mathsf{G}_s = \mathsf{H}_s$ for any $s \in \Sigma$ by $\mathsf{G} = \mathsf{H}$, so the equality of the constituent symmetric monoidal comonad morphisms
  from $(\mathcal{C}, \bang_s)$ to $(\mathcal{D}, \bang'_s)$ is obtained from $\mathsf{G}_s$ and $\mathsf{H}_s$ respectively. 2-cells are handled similarly, so we omit the corresponding part of the proof.

  Now we show that $\Omega$ is full for both 1-cells and 2-cells. Assume we are given a 1-cell
  $\mathsf{H}_{\bullet} : (\mathcal{C}, \mathsf{T}, \mathsf{F}^{\natural}, h_{\mathcal{C}}) \to (\mathcal{D}, \mathsf{T}', \mathsf{F}'^{\natural}, h_{\mathcal{D}})$ in $\text{${\bf pre}$-${\bf Bouton}_{\Sigma}$}_R^{\mathsf{K}}$ 
  with a carrier $\mathsf{H} : \mathcal{C} \to \mathcal{D}$ where $(\mathcal{C}, \mathsf{T}, \mathsf{F}^{\natural}, h_{\mathcal{C}}) = \Omega(\mathcal{C}, (\bang_s)_{s \in \Sigma}, (\mu_{s_1,s_2})_{s_1 \preceq s_2})$
  and $(\mathcal{D}, \mathsf{T}', \mathsf{F}'^{\natural}, h_{\mathcal{D}}) = \Omega(\mathcal{D}, (\bang'_s)_{s \in \Sigma}, (\mu'_{s_1,s_2})_{s_1 \preceq s_2})$. We must ensure that $\mathsf{H}_{\bullet}$ forms a 1-cell between $\mathcal{C}$ and $\mathcal{D}$
  in ${\bf Assemblage}_{\Sigma}$, that is, $\mathsf{H}$ satisfies the corresponding part of Definition~\ref{2:cat:cocteau}.
  We consider the case of relevant comonads, the other cases are considered similarly. First of all, the following square commutes:
    \[
    \begin{tikzcd}
      \mathcal{C}^{\bang_s} \ar[rr, "\mathsf{H}_s"] \ar[d, "\mathsf{F}^{\natural}_{\mathcal{C}^{\bang_s}}"']  && \mathcal{D}^{\bang'_s} \ar[d, "\mathsf{F}'^{\natural}_{\mathcal{D}^{\bang'_s}}"] \\
      \mathcal{C} \ar[rr, "\mathsf{H}"'] && \mathcal{D}
    \end{tikzcd}
    \]
    so, by Proposition~\ref{coalgebra:theorem:pr}, \cite[Proposition 12]{mellies2009categorical} and Proposition~\ref{strict:fun:comonad}, we have a symmetric lax monoidal comonad morphism 
    $(\mathsf{H}, \widetilde{\mathsf{H}_s}) : (\mathcal{C}, \bang_s) \Rightarrow (\mathcal{D}, \bang'_s)$
    with the component $\widetilde{\mathsf{H}_s} : \mathsf{H} \bang_s \Rightarrow \bang'_s \mathsf{H}$ induced by $\mathsf{H}_s$. Let us check the rest of the conditions of 1-cells in ${\bf Assemblage}_{\Sigma}$.
    Further, we must ensure that~(\ref{cocteau:1cell:axiom1}) is satisfied for any $s_1 \preceq s_2$ in $\Sigma$.

    On the one hand, we have the following left morphism of adjunctions:
    \[
    \begin{tikzcd}
      \mathcal{C}^{\bang_{s_2}} \ar[rr, "\mathsf{H}_{s_2}"] \ar[dd, "\mathsf{H}^{\natural}_{\mathcal{C}^{\bang_{s_2}}}"', ""{name=Il}] && \mathcal{D}^{\bang'_{s_2}} \ar[dd, "\mathsf{H}'^{\natural}_{\mathcal{D}^{\bang'_{s_2}}}"', ""{name=Il'}]  \ar[rr, "{\mathsf{T}'}_{s_1,s_2}"] && \mathcal{D}^{\bang'_{s_1}} \ar[dd, "\mathsf{H}'^{\natural}_{\mathcal{D}^{\bang'_{s_1}}}", ""{name=Rl'}] \\
      \\
      \mathcal{C} \ar[rr, "\mathsf{H}"']  && \mathcal{D} \ar[rr, equal] && \mathcal{D}
    \end{tikzcd}
    \]
    which gives us the comonad morphism $\mu'_{s_1,s_2} \mathsf{H} \circ \widetilde{\mathsf{H}_s}$ by Proposition~\ref{coalgebra:theorem:pr}, \cite[Proposition 12]{mellies2009categorical} and Proposition~\ref{strict:fun:comonad}.
    
    On the other hand, we have:
    \[
    \begin{tikzcd}
    \mathcal{C}^{\bang_{s_2}} \ar[rr, "{\mathsf{T}}_{s_1,s_2}"] \ar[dd, "\mathsf{H}^{\natural}_{\mathcal{C}^{\bang_{s_2}}}"', ""{name=Il}] &&  \mathcal{C}^{\bang_{s_1}} \ar[dd, "\mathsf{H}^{\natural}_{\mathcal{C}^{\bang_{s_1}}}"', ""{name=Rl}] \ar[rr, "\mathsf{H}_{s_1}"] && \mathcal{D}^{\bang'_{s_1}} \ar[dd, "\mathsf{H}'^{\natural}_{\mathcal{D}^{\bang'_{s_1}}}", ""{name=Rl'}] \\
    \\
    \mathcal{C} \ar[rr, equal] && \mathcal{C} \ar[rr, "\mathsf{H}"'] && \mathcal{D}
    \end{tikzcd}
    \]
    which, in turn, induces the comonad morphism $\mathsf{H}_{s_1} \circ \mathsf{H} \mu_{s_1,s_2}$ by the same statements. Finally, the following square holds by the definition of
    a 1-cell in $\text{${\bf pre}$-${\bf Bouton}_{\Sigma}$}_R^{\mathsf{K}}$:
    \[
    \begin{tikzcd}
      \mathcal{C}^{\bang_{s_2}} \ar[dd, "{\mathsf{T}}_{s_1,s_2}"'] \ar[rr, "\mathsf{H}_{s_2}"] && \mathcal{D}^{\bang'_{s_2}} \ar[dd, "{\mathsf{T}'}_{s_1,s_2}"] \\ 
      \\
      \mathcal{C}^{\bang_{s_1}} \ar[rr, "\mathsf{H}_{s_1}"'] && \mathcal{D}^{\bang'_{s_1}}
    \end{tikzcd}
    \]
    Thus, the composites $\mathsf{H}_{s_1} \circ \mathsf{H} \mu_{s_1,s_2}$ and $\mu'_{s_1,s_2} \mathsf{H} \circ \mathsf{H}_{s_2}$ are equal.
    
    Further, we observe that the functor $\mathsf{H}_{s} : \mathcal{C}^{\bang_{s}} \to \mathcal{D}^{\bang'_{s}}$ preserves the copying operations for any $s \in C$.
    Let $(A, h_A)$ be a coalgebra in $\mathcal{C}^{\bang_s}$, then we have a coalgebra 
    $(\mathsf{H}A, {\mathsf{H}_s}_A \circ \mathsf{H}(h_A))$ in $\mathcal{D}^{\bang'_{s}}$. 
    On the other hand, we take $(A, h_A)$and endow it with 
    the copying operation $\gamma_A$ as follows:
    \[
    \begin{tikzcd}
      A \ar[r, "h_A"] & \bang_{s} A \ar[r, "\mathsf{c}^{s}_A"] & \bang_{s} A \otimes \bang_{s} A \ar[rr, "\varepsilon^s_A \otimes \varepsilon^s_A"] && A \otimes A
    \end{tikzcd}
    \]
    
    Then we endow the coalgebra $(\mathsf{H}A, {\mathsf{H}_s}_A \circ \mathsf{H}(h_A))$ with the copying operation $\gamma_{\mathsf{H}A} : \mathsf{H}A \to \mathsf{H}A \otimes \mathsf{H}A$ as follows:
    \begin{equation}~\label{comul1}
    \begin{tikzcd}
      \mathsf{H}A \ar[r, "\mathsf{H}(h_A)"] & \mathsf{H}(\bang_{s} A) \ar[r, "{\mathsf{H}_{s}}_A"] & \bang'_{s} \mathsf{H}A \ar[r, "\mathsf{c}'^s_{\mathsf{H}A}"] & \bang'_{s} \mathsf{H}A \otimes \bang'_{s} \mathsf{H}A  \ar[rr, "\varepsilon'^{s}_{\mathsf{H}A} \otimes \varepsilon'^{s}_{\mathsf{H}A}"] && \mathsf{H}A \otimes \mathsf{H}A
    \end{tikzcd}
    \end{equation}
    But those composites are equal by Theorem~\ref{coalgebra:relevant}.

    Now we establish that we have fullness for 2-cells as well. Let $\mathsf{G}, \mathsf{H} \in {\bf Assemblage}_{\Sigma}(\mathcal{C}, \mathcal{D})$, 
    so $\mathsf{G}$ and $\mathsf{H}$ induce coalgebra morphisms 
    $\mathsf{G}_{\bullet}, \mathsf{H}_{\bullet} : (\mathcal{C}, \mathsf{T}, \mathsf{F}^{\natural}, h_{\mathcal{C}}) \to (\mathcal{D}, \mathsf{T}', \mathsf{F}'^{\natural}, h_{\mathcal{D}})$
    and let $\theta : \mathsf{G} \Rightarrow \mathsf{H}$ be a 2-cell in
    $\text{${\bf pre}$-${\bf Bouton}_{\Sigma}$}_R^{\mathsf{K}}$. We must ensure that $\theta$ provides a 2-cell in ${\bf Assemblage}_{\Sigma}$.
    First of all, we have, for any $s \in \Sigma$:
    \begin{equation}
    \begin{tikzcd}
      \mathcal{C}^{\bang_s} \ar[dd, "\mathsf{F}^{\natural}_{\mathcal{C}^{\bang_s}}"'] \ar[rrr, bend left=15, "\mathsf{G}_{s}", ""{name=Fr}] \ar[rrr, bend right=15, "\mathsf{H}_{s}"', ""{name=Gr}] &&& \mathcal{D}^{\bang'_{s}} \ar[dd, "\mathsf{F}'^{\natural}_{\mathcal{D}^{\bang'_{s}}}"] \\
      \\
      \mathcal{C} \ar[rrr, bend left=15, "\mathsf{G}", ""{name=F}] \ar[rrr, bend right=15, "\mathsf{H}"', ""{name=G}] &&& \mathcal{D}
      \arrow[from=Fr, to=Gr, Rightarrow, "\theta_{s}"]
      \arrow[from=F, to=G, Rightarrow, "\theta"]
    \end{tikzcd}
    \end{equation}
    We already know that $\mathsf{G}$ and $\mathsf{H}$ are equipped with symmetric lax monoidal comonad morphisms
    $(\mathsf{G}, \widetilde{\mathsf{G}_s}), (\mathsf{H}, \widetilde{\mathsf{H}_s}) : (\mathcal{C}, \bang_s) \Rightarrow  (\mathcal{D}, \bang'_s)$
    with the components $\widetilde{\mathsf{G}_s} : \mathsf{G} \bang_s \Rightarrow \bang'_s \mathsf{G}$ and 
    $\widetilde{\mathsf{H}_s} : \mathsf{H} \bang_s \Rightarrow \bang'_s \mathsf{H}$ for $s \in \Sigma$.
    By Proposition~\ref{coalgebra:theorem:pr}, \cite[Proposition 12]{mellies2009categorical}  and Proposition~\ref{strict:fun:comonad}, therefore $\theta$ induces symmetric lax monoidal comonad morphism transformation
    $(\mathsf{G}, \widetilde{\mathsf{G}_s}) \Rightarrow (\mathsf{H}, \widetilde{\mathsf{H}_s})$.
    To finally check~(\ref{final:axiom}), observe that the following square already commutes as $\theta$ is a 2-cell in $\text{${\bf pre}$-${\bf Bouton}_{\Sigma}$}$ for any $s_1 \preceq s_2$:
    \begin{equation}~\label{base}
      \begin{tikzcd}
        \mathcal{C}^{\bang_{s_2}} \ar[dd, "{\mathsf{T}}_{s_1,s_2}"'] \ar[rrr, bend left=15, "\mathsf{G}_{s_2}", ""{name=Fi}] \ar[rrr, bend right=15, "\mathsf{H}_{s_2}"', ""{name=Gi}] &&& \mathcal{D}^{\bang'_{s_2}} \ar[dd, "{\mathsf{T}'}_{s_1,s_2}"] \\
        \\
        \mathcal{C}^{\bang_{s_1}} \ar[rrr, bend left=15, "\mathsf{G}_{s_1}", ""{name=Fr}] \ar[rrr, bend right=15, "\mathsf{H}_{s_1}"', ""{name=Gr}] &&& \mathcal{D}^{\bang'_{s_1}}
      \arrow[from=Fi, to=Gi, Rightarrow, "\theta_{s_2}"]
      \arrow[from=Fr, to=Gr, Rightarrow, "\theta_{s_1}"]
      \end{tikzcd}
    \end{equation}
    
    Now consider the following prism:
    \begin{equation}~\label{prism}
    \begin{tikzcd}
      |[alias=Ci]| \mathcal{C}^{\bang_{s_2}} &&&& |[alias=Di]| \mathcal{D}^{\bang'_{s_2}} \\
      \\
      && |[alias=C]| \mathcal{C} &&&& |[alias=D]| \mathcal{D} \\
      \\
      |[alias=Cr]| \mathcal{C}^{\bang_{s_1}} &&&& |[alias=Dr]| \mathcal{D}^{\bang'_{s_1}} 
    \arrow[from=Ci, to=Di, "\mathsf{G}_{s_2}", bend left=15, ""{name=Fi}]
    \arrow[from=Ci, to=Di, "\mathsf{H}_{s_2}"', bend right=15, ""{name=Gi}]
    \arrow[from=Fi, to=Gi, Rightarrow, "\theta_{s_2}"']
    \arrow[from=Cr, to=Dr, "\mathsf{G}_{s_1}", bend left=15, ""{name=Fr}]
    \arrow[from=Cr, to=Dr, "\mathsf{H}_{s_1}"', bend right=15, ""{name=Gr}]
    \arrow[from=Fr, to=Gr, Rightarrow, "\theta_{s_1}"']
    \arrow[from=C, to=D, bend left=15,  "\mathsf{G}\:\:\:\:\:\:\:", ""{name=F}]
    \arrow[from=C, to=D, bend right=15, "\mathsf{H}\:\:\:\:\:\:\:"', ""{name=G}]
    \arrow[from=F, to=G, Rightarrow, shift right=3, "\theta"']
    \arrow[from=Ci, to=C, "\mathsf{F}^{\natural}_{\mathcal{C}^{\bang_{s_2}}}"]
    \arrow[from=Cr, to=C, "\mathsf{F}^{\natural}_{\mathcal{C}^{\bang_{s_1}}}"]
    \arrow[from=Di, to=D, "\mathsf{F}'^{\natural}_{\mathcal{D}^{\bang'_{s_2}}}"]
    \arrow[from=Dr, to=D, "\mathsf{F}'^{\natural}_{\mathcal{D}^{\bang'_{s_1}}}"']
    \arrow[from=Ci, to=Cr, "{\mathsf{T}}_{s_1,s_2}"']
    \arrow[from=Di, to=Dr, crossing over, "{\mathsf{T}'}_{s_1,s_2}"]
    \end{tikzcd}
    \end{equation}
    where lateral triangles commute since $\mathcal{C}$ and $\mathcal{D}$ induce $\Sigma$-boutons by Proposition~\ref{cocteau:bouton}, and those triangles
    are the corresponding left adjunction morphisms.
    The front square already commutes since it is exactly the same square as~(\ref{base}). 
    The bottom square holds for $\theta_{s_1} : \mathsf{G}_{s_1} \Rightarrow \mathsf{H}_{s_1}$ between $\mathsf{G}_{s_1}, \mathsf{H}_{s_1} : \mathcal{C}^{\bang_{s_1}} \to \mathcal{D}^{\bang'_{s_1}}$
    since $\mathsf{G}_{s_1}$ and $\mathsf{H}_{s_1}$ are symmetric lax monoidal comonad morphisms $(\mathcal{C}, \bang_{s_1}) \Rightarrow (\mathcal{D}, \bang'_{s_1})$ induced by the strict symmetric monoidal functors $\mathsf{G}$
    and $\mathsf{H}$. So in the bottom square, $\theta_{s_1}$ is connected with $\theta$ with the corresponding coforgetful functors from the comonad decompositions of $\bang_{s_1}$
    and $\bang'_{s_1}$. The back square commutes similarly but for comonads $(\mathcal{C}, \bang_{s_2})$ and $(\mathcal{D}, \bang'_{s_2})$.
    So we conclude~(\ref{final:axiom}) by applying Proposition~\ref{coalgebra:theorem:pr}, \cite[Proposition 12]{mellies2009categorical} and Proposition~\ref{strict:fun:comonad}.
\end{proof}
\section{Realisability Interpretation of ${\bf SILL}(\lambda)_{\Sigma_3}$}~\label{realisability}

Realisability was initially introduced by Kleene to concretise and formalise BHK-semantics of intuitionistic logic by means of computability theory \cite{kleene1945interpretation} \cite{kleene1969formalized}.
More generally, the intuition behind realisability is to equip proofs with the computational procedures implementing the process of a proof as an algorithm.
In constructive mathematics, realisability is connected with such topics as program extraction and the disjunction and the existential properties. 
One can also categorify the idea of realisability and consider the category of assemblies, that is, the category of sets whose elements, informally, have computational evidence,
extracted from, for example, the Kleene partial combinatory algebra $\mathcal{K}_1$ or other partial or total applicative structures, see \cite{longley1995realizability} and \cite{van2008realizability}.
Besides, one can provide the realisability interpretation in this way for simply typed lambda calculus, System F, the calculus of constructions and even MLTT, 
see \cite{reus1999realizability} and \cite{bezem2018realizability}.

There are also several approaches to realisability in linear logic developing the idea of assemblies over linear combinatory algebras 
(the concept coined in \cite{abramsky2002geometry}) to give the realisability interpretation of linear logic in the appropriate categories of assemblies, modest sets and partial equivalence relations, see \cite{abramsky2005linear} and \cite{hoshino2007linear}.
In this section, we develop Hoshino's approach to linear realisability and expand the concepts of a (relational) linear combinatory algebra to the subexponential
case with two extra modalities corresponding to the relevant and affine resource policies. 
This allows for concretising the intuition of keeping multiple resource consumption strategies within a single setting with the actual computational content by 
obtaining realisability models of ${\bf SILL}(\lambda)_{\Sigma_3}$.

\subsection{Background on applicative structures}

A \emph{partial applicative structure} is a structure $(A, \cdot)$ where $A$ is a set and $\cdot : A \times A \rightharpoonup A$ is a partial binary operation.
If $\cdot$ is total, then we call $(A, \cdot)$ just an \emph{applicative structure}. 
\begin{definition} A \emph{partial combinatory algebra} (see, e.g., \cite{feferman2006language}) (PCA) is a partial applicative structure $(A, \cdot)$ such that there are $s, k \in A$ such that:
  \begin{itemize}
    \item $\forall x \in A \:\: s \cdot x \downarrow$
    \item $\forall x, y \in A \:\: s \cdot x \cdot y \downarrow$,
    \item $\forall x,y,z \in A \:\: s \cdot x \cdot y \cdot z \simeq (x \cdot z) \cdot (y \cdot z)$,
    \item $\forall x, \in A \:\: k \cdot x \downarrow $
    \item $\forall x,y \in A \:\: k \cdot x \cdot y \simeq x$.
  \end{itemize}
  where $x \cdot y \downarrow$ means that the value of $x \cdot y$ is defined and $\simeq$ means that if the left-hand side of the equation is defined,
  then the right-hand side of the equation is defined, and they are equal.

  A total PCA is simply a \emph{combinatory algebra}.
\end{definition}

The canonical example of a PCA is the Kleene PCA $\mathcal{K}_1 = (\omega, \cdot)$, where $m \cdot n$ means that a partial recursive function with a G\"{o}del number $m$ is applied to the input $n$.

\begin{definition} Let $(A, \cdot)$ be an applicative structure, then $(A, \cdot)$ is called a \emph{BCI-algebra} if it has $i,b,c \in A$ such that the following identities are 
  satisfied for any $x,y,z \in A$:
  \begin{itemize}
    \item $i \cdot x = x$,
    \item $b \cdot x \cdot y \cdot z = x \cdot (y \cdot z)$,
    \item $c \cdot x \cdot y \cdot z = (x \cdot z) \cdot y$.
  \end{itemize}
  A BCI algebra is \emph{BCK} (\emph{BCW}) if it has $k \in A$ ($w \in A$) such that $k \cdot x \cdot y = x$ ($w \cdot x \cdot y = x \cdot y \cdot y$).
\end{definition}

Let $\mathcal{A}$ be an applicative structure and let $\{ x_i \: | \: i < \omega \}$ be a set of individual variables.
A \emph{polynomial} over $\mathcal{A}$ is an expression generated by $x_i$'s, elements of $A$ and application.
$\mathcal{A}[x_1, \ldots, x_n]$ is the set of all polynomials in $x_1, \ldots, x_n$. 
Clearly, if $\mathcal{A}$ is a BCI (BCK, BCW) algebra or a PCA, so is $\mathcal{A}[x_1, \ldots, x_n]$.

\begin{prop}~\label{abstraction:lemma}
  $ $

  \begin{enumerate}
    \item Let $\mathcal{P} = (P, \cdot)$ be a PCA and let $M \in \mathcal{P}[x, x_1, \ldots, x_n]$
    be a polynomial over $\mathcal{P}$, then there exists a polynomial 
    $\langle x \rangle_{\bf i} M \in \mathcal{P}[x_1, \ldots, x_n]$ (read as ``intuitionistic abstraction'') such that for each $a \in \mathcal{P}$
    $\langle x \rangle M \cdot a \simeq M[x := a]$.
    \item Let $\mathcal{L} = (L, \cdot)$ be a BCI-algebra and let $M \in \mathcal{L}[x, x_1, \ldots, x_n]$
    be a polynomial over $\mathcal{L}$ such that $x$ occurs freely in $M$ exactly once, then
    there exists a polynomial $\langle x \rangle M$ such that for each $a \in \mathcal{L}$
    $\langle x \rangle_{\bf i} M \cdot a = M[x := a]$.
    \item Let $\mathcal{A} = (A, \cdot)$ be a BCK-algebra and let $M \in \mathcal{A}[x, x_1, \ldots, x_n]$
    be a polynomial over $\mathcal{A}$ such that $x$ occurs freely in $M$ at most once, then
    there exists a polynomial $\langle x \rangle_{\bf a} M$ (read as ``affine abstraction'')  such that for each $a \in \mathcal{A}$
    $\langle x \rangle_{\bf a} M \cdot a = M[x := a]$.
    \item Let $\mathcal{R} = (R, \cdot)$ be a BCW-algebra and let $M \in \mathcal{R}[x, x_1, \ldots, x_n]$
    be a polynomial over $\mathcal{R}$ such that $x$ occurs freely in $M$ at least once, then
    there exists a polynomial $\langle x \rangle_{\bf r} M$ (read as ``relevant abstraction'')  such that for each $a \in \mathcal{R}$ $\langle x \rangle_{\bf r} M \cdot a = M[x := a]$.
  \end{enumerate}
\end{prop}

\begin{proof} The fact is standard, but there are some nuances to discuss. Some ideas are taken from \cite[§1-2]{hindley1989bck}.
  This is what one has in a combinatory algebra $\mathcal{C}$:

  \begin{itemize}
    \item $\langle x \rangle_{\bf i} x = (s \cdot k) \cdot k$,
    \item $\langle x \rangle_{\bf i} M = k \cdot M$ if $M$ is a polynomial containing no $x$ as a free variable,
    \item $\langle x \rangle _{\bf i}(M_1 \cdot M_2) = s \cdot (\langle x \rangle_{\bf i} M_1) \cdot (\langle x \rangle_{\bf i} M_2)$.
  \end{itemize}

  In the case of BCI-algebras, we have the following modification:

  \begin{enumerate}
    \item $\langle x \rangle x := i$,
    \item $\langle x \rangle (M \cdot x) := M$ if $M$ does not contain $x$ as a free variable.
    \item 

    $\langle x \rangle (M_1 \cdot M_2) := \begin{cases}
      (b \cdot M_1) \cdot (\langle x \rangle M_2), \text{if $x \in \operatorname{FV}(M_2) \setminus \operatorname{FV}(M_1)$ and 
      $x \not\eqcirc M_2$.} \\
      (c \cdot (\langle x \rangle M_1)) \cdot M_2, \text{if $x \in \operatorname{FV}(M_1) \setminus \operatorname{FV}(M_2)$ and
      $x \not\eqcirc M_1$.}
    \end{cases}$
  \end{enumerate}

  The above defined construction is modified in the following way for the BCK case since the body of an abstraction might not contain the occurrence
  of a variable bound by abstraction:
  \begin{itemize}
    \item $\langle x \rangle_{\bf a} M := k \cdot M$ if $M$ is a polynomial containing no $x$ as a free variable,
  \end{itemize}

  In the BCW case, the abstraction $\langle x \rangle_{\bf r} M$ is defined whenever $x$ occurs in $M$ at least once.
  The clauses for $x$ itself and for applications $M_1 \cdot M_2$ are the same as in the case of BCI. When $x$ occurs in both $M_1$ and $M_2$,
  we put $\langle x \rangle_{\bf r} (M_1 \cdot M_2) := s \cdot (\langle x \rangle_{\bf r} M_1) \cdot (\langle x \rangle_{\bf r} M_2)$, as $s$ is derivable in BCW algebras.
  But there is no clause for $x \not\in \operatorname{FV}(M)$, and this is the difference between $\langle x \rangle_{\bf i}$ and $\langle x \rangle_{\bf r}$.
\end{proof}

We say that an applicative structure \emph{admits abstraction} when it satisfies the condition of Proposition~\ref{abstraction:lemma}.
We write $\hat{x}. M$ whenever we need to refer to abstraction in an applicative structure without specifying
what exactly kind of abstraction we use.

Let $\mathcal{P} = (P, \cdot)$ be an applicative structure admitting abstraction and let $m, n \in \mathcal{P}$, let
\[
m \bullet n := \hat{x}. x \cdot m \cdot n.
\]
Also we also write ${\bf let} \: x \bullet y = z \: {\bf in} \: F(x,y)$ for $z \cdot (\hat{x}. \hat{y}. F(x,y))$ where $F(x,y)$ is an element of $\mathcal{P}[x,y]$.
Therefore we have:
\[
{\bf let} \: x \bullet y = m \bullet n \: {\bf in} \: F(x,y) \: := F[x:=m,y:=n]
\]

\begin{prop}~\label{bck:projections} Let $\mathcal{P} = (P, \cdot)$ be an applicative structure admitting abstraction, then

  \begin{enumerate}
  \item Assume $\mathcal{P}$ is BCK. Let $m_1, m_2 \in \mathcal{P}$, then there are
  $p_1, p_2 \in \mathcal{P}$ such that $p_i \cdot (m_1 \bullet m_2) = m_i$ for $i = 1, 2$.
  \item Assume $\mathcal{P}$ is BCW. Let $m \in \mathcal{P}$, then there is $\operatorname{copy} \in \mathcal{P}$
  such that $\operatorname{copy} \cdot m = m \bullet m$.
  \end{enumerate}
\end{prop}

\begin{proof} For the BCK case, put $p_1 := \langle x \rangle_{\bf a} x \cdot k$ and $p_2 := \langle x \rangle_{\bf a} \:\: x \cdot (k \cdot i)$.
  For the BCW case, define $\operatorname{copy}$ as $\operatorname{copy} \cdot m := (w \cdot (\langle x \rangle \langle y \rangle x \bullet y)) \cdot m$ for each $m \in \mathcal{P}$.
\end{proof}

The concept of an applicative morphism was introduced by Longley in \cite[§2.1]{longley1995realizability} as a total relation between applicative structures
robustly lifting to the functor between categories of assemblies, so if we have an applicative morphism from $\mathcal{P}$ to $\mathcal{Q}$, 
then we have a witness in $\mathcal{Q}$ that tracks a morphism as well as the application operation in $\mathcal{P}$. The intuition behind 
the idea of an applicative morphism is that we read $\gamma \subseteq \mathcal{P} \times \mathcal{Q}$ as the interpretation of $\mathcal{P}$ in $\mathcal{Q}$, 
and such interpretations are in general not functions, but relations, since $p \in \mathcal{P}$ could have multiple interpretations in $\mathcal{Q}$.

\begin{definition} Let $\mathcal{P} = (P, \cdot), \mathcal{Q} = (Q, \cdot)$ be applicative structures.
  An \emph{applicative morphism} from $\mathcal{P}$ to $\mathcal{Q}$ is a total relation $\gamma \subseteq P \times Q$ such that
  \[
  \exists r \in \mathcal{Q} \: \forall a, b \in \mathcal{P} \: \forall a' \in \gamma(a) \: \forall b' \in \gamma(b) \:\: (r \cdot a') \cdot b' \in \gamma(a \cdot b).
  \]

  $r$ is said to be a \emph{realiser} for $\gamma$ or $\gamma$ is \emph{realised} by $r$. An applicative morphism $\gamma$ from $\mathcal{P}$ to $\mathcal{Q}$ is \emph{functional} if $\gamma$ is a function.
    An \emph{operator} on an applicative structure $\mathcal{P} = (P, \cdot)$ is a functional applicative morphism $\gamma : \mathcal{P} \to \mathcal{P}$.
  
    Let $\gamma_1$ and $\gamma_2$ be applicative morphisms from $\mathcal{P}$ to $\mathcal{Q}$,
  then $\gamma_1 \sqsubseteq \gamma_2$ if there exists $r \in \mathcal{Q}$ such that
  for each $a \in \mathcal{P}$ and for each $q \in \gamma_1(a)$ one has $r \cdot q \in \gamma_2(a)$.

  Let $\mathcal{P} = (P, \cdot)$ be an applicative structure and let $\gamma : \mathcal{P} \to \mathcal{P}$ be an applicative endomorphism.
  Then $\gamma$ is called \emph{comonadic} if $\gamma \sqsubseteq {\bf id}_{\mathcal{P}}$ and $\gamma \sqsubseteq \gamma^2$.
\end{definition}

The concept of an adjoint pair of applicative morphisms mirrors the ordinary concept of adjoint functors, and it was also introduced by Longley for applicative morphisms 
between PCAs \cite[§2.5]{longley1995realizability}, but it can be formulated for a broader class of applicative structures.

\begin{definition}
  Let $\mathcal{P} = (P, \cdot), \mathcal{Q} = (Q, \cdot)$ be applicative structures and
  let $\gamma_1 : \mathcal{P} \to \mathcal{Q}$ and $\gamma_2 : \mathcal{Q} \to \mathcal{P}$ be applicative morphisms.
  Then $\gamma_1$ and $\gamma_2$ form an \emph{adjoint pair}, denoted as $\gamma_1 \dashv \gamma_2$, if
  $\gamma_1 \circ \gamma_2 \sqsubseteq 1_{\mathcal{Q}}$ and $1_{\mathcal{P}} \sqsubseteq \gamma_2 \circ \gamma_1$. Notation: $\gamma_1 \dashv \gamma_2 : \mathcal{Q} \to \mathcal{P}$.
\end{definition}

Clearly, if we have applicative structures $\mathcal{P}$ and $\mathcal{Q}$ such that
there is an adjoint pair $\gamma_1 \dashv \gamma_2 : \mathcal{Q} \to \mathcal{P}$, then $\gamma_1 \circ \gamma_2$ is a comonadic applicative morphism
$\mathcal{Q} \to \mathcal{Q}$.

\begin{definition} Let $\mathcal{P} = (P, \cdot)$ be an applicative structure. 
  \begin{itemize}
  \item An \emph{assembly} on $\mathcal{P}$ is a structure
  $(X, E)$ where $X$ is a set and $E$ is a total relation from $X$ to $\mathcal{P}$.

  \item A morphism of assemblies $\mathcal{X} = (X, E_1)$ and $\mathcal{Y} = (Y, E_2)$ is a map $f : X \to Y$ 
  such that there is $a \in \mathcal{P}$ such that for each $x \in X$ and for each $b \in \mathcal{P}$ 
  such that $b \in E_1(x)$ one has $a \cdot b \in E_2(f(x))$. 
  In this case we say that $a$ \emph{tracks} $f$ and $f$ is a realisable map.
  \end{itemize}
  Let $\operatorname{Asm}(\mathcal{P})$ denote the category of assemblies and their realisable maps.
\end{definition}

Modest sets are those assemblies whose elements do not share common realisers.
\begin{definition}
Let $\mathcal{X} = (X, E)$ be an assembly over an applicative structure $\mathcal{P}$,
then $\mathcal{X}$ is called a \emph{modest set} if for any different $x, y \in X$ $E(x)$ and $E(y)$ are disjoint.

$\operatorname{Mod}(\mathcal{P})$ is the category of modest sets and their realisable maps.
\end{definition}

One can check that $\operatorname{Mod}(\mathcal{P})$, the category of modest sets,
is a full reflective subcategory of $\operatorname{Asm}(\mathcal{P})$.

\begin{prop}
  
  Let $\mathcal{P}$ be an applicative structure, then
  \begin{enumerate}
    \item If $\mathcal{P}$ is BCI, then $\operatorname{Asm}(\mathcal{P})$ is an SMCC,
    \item If $\mathcal{P}$ is BCK, then  $\operatorname{Asm}(\mathcal{P})$ is a semicartesian SMCC,
    \item If $\mathcal{P}$ is BCW, then $\operatorname{Asm}(\mathcal{P})$ is a relevant SMCC,
    \item If $\mathcal{P}$ is a PCA, then $\operatorname{Asm}(\mathcal{P})$ is Cartesian closed.
  \end{enumerate}
\end{prop}

\begin{proof}
  One can find the idea of the proof in \cite[Proposition 4]{hoshino2007linear}, 
  so we just outline the sketch for substructural applicative structures.

   In any clause $\operatorname{Asm}(\mathcal{P})$ is an SMCC, the symmetric monoidal closed structure is given as follows. 
   For $\mathcal{X} = (X, E_1), \mathcal{Y} = (Y, E_2) \in 
   \Ob{\operatorname{Asm}(\mathcal{P})}$, their tensor product is defined as $\mathcal{X} \otimes \mathcal{Y} = (X \times Y, E)$ where, for $x \in X$, $y \in Y$
      and $a_1, a_2 \in \mathcal{P}$, if $a_1 \in E_1(x)$ and $a_2 \in E_2(y)$, then $a_1 \bullet a_2 \in E(x,y)$.
      The carrier of $[\mathcal{X}, \mathcal{Y}]$ is the set $\{ f : X \to Y \: | \: \text{$f$ is realisable} \}$.
      $\mathds{1} = ( \{ *\}, E)$ where $E(*) = \{ i \}$.
    
    Assume that $\mathcal{P}$ is BCK. Let $\mathcal{X}_1, \mathcal{X}_2 \in \Ob{\operatorname{Asm}(\mathcal{P})}$, then we have
    $\mathcal{X}_1 \otimes \mathcal{X}_2 \xrightarrow{\pi_i} X_i$,
    whose underlying maps are $\pi_i : (x_1, x_2) \mapsto x_i$ and $\pi_i$ is realised by $p_i$ from Proposition~\ref{bck:projections}.
    Let $\mathcal{X} = (X, E) \in \Ob{\operatorname{Asm}(\mathcal{P})}$. $\mathds{1}$ is obviously terminal in this case.
    
    For the BCW case, the copying map $\operatorname{copy}_{\mathcal{X}} : \mathcal{X} \to \mathcal{X} \otimes \mathcal{X}$
    is realised by $\operatorname{copy} \in \mathcal{P}$ from Proposition~\ref{bck:projections}.
    
    In \cite[Theorem 1.5.2]{van2008realizability} or \cite[§3.1]{reus1999realizability}, one can find the proof of the last clause.
\end{proof}

If $\mathcal{P}$ is an applicative structure admitting abstraction, then we shall refer to 
$\operatorname{Asm}(\mathcal{P})$ as \emph{a symmetric monoidal closed category of assemblies}.

The category of modest sets in a symmetric monoidal closed category of assemblies has nice properties that we take from \cite[Proposition 5, Lemma 1]{hoshino2007linear}.
Let $\mathcal{P}$ be an applicative structure admitting abstraction, then the inclusion functor $\iota : \operatorname{Mod}(\mathcal{P}) \hookrightarrow \operatorname{Asm}(\mathcal{P})$ has a left adjoint
$\Delta : \operatorname{Asm}(\mathcal{P}) \to \operatorname{Mod}(\mathcal{P})$ such that $ \operatorname{Mod}(\mathcal{P})$ is a reflective full subcategory of $\operatorname{Asm}(\mathcal{P})$.
Besides, if $\mathcal{X} \in \Ob{\operatorname{Mod}(\mathcal{P})}$ and $\mathcal{Y} \in \Ob{\operatorname{Asm}(\mathcal{P})}$, then there is an isomorphism
$[\mathcal{Y}, \iota(\mathcal{X})] \xrightarrow{\cong} \iota \Delta[\mathcal{Y}, \iota(\mathcal{X})]$.

Further, applicative morphisms of BCI algebras (as well as their expansions and PCAs) lift to symmetric lax monoidal functors in the following way.
Let $\gamma : \mathcal{L}_1 \to \mathcal{L}_2$ be an applicative morphism of BCI algebras, 
then $\gamma_* : \operatorname{Asm}(\mathcal{L}_1) \to \operatorname{Asm}(\mathcal{L}_2)$ is a functor that sends $(X, E_X)$ in 
$\operatorname{Asm}(\mathcal{L}_1)$ to $(X, \gamma(E_X))$ in $\operatorname{Asm}(\mathcal{L}_2)$ and a morphism $f : X \to Y$ to 
$\gamma_* f : \gamma_*X \to \gamma_* Y$ with $|f|$ as the underlying map. Let $\mathcal{L}_1$ and $\mathcal{L}_2$ be BCI algebras and let $\gamma : \mathcal{L}_1 \to \mathcal{L}_2$ be an applicative morphism,
then $\gamma_* : \operatorname{Asm}(\mathcal{L}_1) \to \operatorname{Asm}(\mathcal{L}_2)$ is a symmetric lax monoidal functor 
with the components
\begin{center}
   $m_{\mathcal{X}, \mathcal{Y}} : \gamma_* \mathcal{X} \otimes \gamma_* \mathcal{Y} \to \gamma_* (\mathcal{X} \otimes \mathcal{Y})$

   $m_{\mathds{1}} : \mathds{1} \to \gamma_* \mathds{1}$
\end{center}
Let $r$ be a realiser for $\gamma$ and let $a \in \gamma(\hat{x}. \hat{y}. x \bullet y)$.
Then one can show that $\hat{x}. \hat{y}. r \cdot (r \cdot a \cdot x) \cdot y$ realises 
$m_{\mathcal{X}, \mathcal{Y}}$. One can also show that $m_{\mathcal{X}, \mathcal{Y}}$ is natural. $m_{\mathds{1}}$ is realised by 
$\hat{x}. x \cdot i'$ where $i' \in \gamma(i)$.

If $\gamma_1 \sqsubseteq \gamma_2$, we can define a natural transformation $\alpha_* : {\gamma_1}_* \Rightarrow {\gamma_2}_*$,
whose underlying maps ${\alpha_*}_X : {\gamma_1}_* X \to {\gamma_2}_* X$ are identities realised by the realiser of $\gamma_1 \sqsubseteq \gamma_2$. 
Besides, $\alpha_*$ is a symmetric lax monoidal natural transformation.

Moreover, we have:
\begin{prop}~\label{symmetric:monoidal:adjunctions:combintory}
  Let $\mathcal{L}_1$ and $\mathcal{L}_2$ be BCI algebras and let $\gamma_1 : \mathcal{L}_1 \to \mathcal{L}_2$ and $\gamma_2 : \mathcal{L}_2 \to 
  \mathcal{L}_1$ such that $\gamma_1 \dashv \gamma_2$, then the adjoint pair $\gamma_1 \dashv \gamma_2$ lifts to the symmetric monoidal adjunction ${\gamma_1}_* \dashv {\gamma_2}_*$ between the corresponding
  categories of assemblies.
\end{prop}

\begin{proof}
  As we have discussed above, both ${\gamma_1}_* : \operatorname{Asm}(\mathcal{L}_1) \to \operatorname{Asm}(\mathcal{L}_2)$
  and ${\gamma_2}_* : \operatorname{Asm}(\mathcal{L}_2) \to \operatorname{Asm}(\mathcal{L}_1)$ are symmetric lax monoidal functors.
  ${\gamma_1}_* \dashv {\gamma_2}_*$ as plain functors by \cite[Proposition 2.5.7]{longley1995realizability},
  so we have natural transformations $\eta_* : 1_{\operatorname{Asm}(\mathcal{L}_1)} \Rightarrow {\gamma_2}_* {\gamma_1}_*$ and
  $\varepsilon_* : {\gamma_1}_* {\gamma_2}_* \Rightarrow 1_{\operatorname{Asm}(\mathcal{L}_2)}$.
  $\eta_*$ and $\varepsilon_*$ are also symmetric lax monoidal natural transformations, which is evident.
\end{proof}

The concept of a relational subexponential linear combinatory algebra is a combinatory counterpart of a $\Sigma_3$-assemblage, which is also an expansion
of Hoshino's relational linear combinatory algebra with a single exponential $\bang$.

\begin{definition}
  Let $\mathcal{L} = (L, \cdot)$ be a BCI algebra and let $\bang_{\bf i}, \bang_{\bf a}, \bang_{\bf r} : \mathcal{L} \to \mathcal{L}$
  be comonadic applicative endomorphisms with realisers $r_{\bf i}$, $r_{\bf a}$ and $r_{\bf r}$, then a structure $(\mathcal{L}, \bang_{\bf i}, \bang_{\bf a}, \bang_{\bf r})$
  is called a \emph{relational subexponential linear combinatory algebra} if the following axioms are satisfied:
  \begin{multicols}{2}
  \begin{itemize}
    \item $\bang_{\bf i} \sqsubseteq \bang_{\bf i} \bullet \bang_{\bf i}$,
    \item $\bang_{\bf i} \sqsubseteq (x \mapsto \{ i \})$.
    \item $\bang_{\bf r} \sqsubseteq \bang_{\bf r} \bullet \bang_{\bf r}$,
    \item $\bang_{\bf i} \sqsubseteq \bang_{\bf r}$,
    \item $\bang_{\bf i} \sqsubseteq \bang_{\bf a}$,
    \item $\bang_{\bf a} \sqsubseteq (x \mapsto \{ i \})$.
  \end{itemize}
\end{multicols}
where $\bang_s \bullet \bang_s$ for any $s$ is an applicative morphism $(\bang_s \bullet \bang_s)(p) = \{ u \bullet v \: | \: u,v \in \bang_s(p) \}$ realised as:
\[
\langle x \rangle \langle y \rangle \:\: {\bf let} \: x_1 \bullet x_2 = x \: {\bf in} \: {\bf let} \: y_1 \bullet y_2 = y \: {\bf in} \: r_s \cdot x_1 \cdot y_1 \bullet r_s \cdot x_2 \cdot y_2.
\]
where $r_s$ is a realiser for $\bang_s$.
\end{definition}

The following statement is a combinatory counterpart of Proposition~\ref{cocteau:bouton}, in particular, that part of it that allows one forming an assemblage
from a bouton, but from adjoint pairs between applicative structures:

\begin{prop}~\label{combinatory:bouton}
  Given a PCA $\mathcal{P}$, a BCI-algebra $\mathcal{L}$, a BCK-algebra $\mathcal{A}$ and a BCW-algebra $\mathcal{R}$. Given adjoint pairs
  $f^{\natural}_{\bf i} \dashv f^{\sharp}_{\bf i} : \mathcal{L} \to \mathcal{P}$, $f^{\natural}_{\bf a} \dashv f^{\sharp}_{\bf a} : \mathcal{L} \to \mathcal{A}$ and 
  $f^{\natural}_{\bf r} \dashv f^{\sharp}_{\bf r} : \mathcal{L} \to \mathcal{R}$ such that there are applicative morphisms $\mu_{\bf ir} :  \mathcal{P} \to \mathcal{R}$
  and $\mu_{\bf ia} :  \mathcal{P} \to \mathcal{A}$ with realisers $r_{\bf ir}$ and $r_{\bf ia}$ such that $f^{\natural}_{\bf i} = f^{\natural}_{\bf r} \circ \mu_{\bf ir}$
  and $f^{\natural}_{\bf i} = f^{\natural}_{\bf a} \circ \mu_{\bf ia}$. Let $\bang_s = f^{\natural}_s f^{\sharp}_s$ for every $s$, then $(\mathcal{L}, (\bang_s)_{s \in \{ {\bf i}, {\bf r}, {\bf a}\}})$ is a relational subexponential linear combinatory algebra.
\end{prop}
\begin{proof}
  Each $\bang_s$ is a comonadic applicative morphism. Let $\varepsilon_s$ be a realiser for $f^{\natural}_s f^{\sharp}_s \sqsubseteq 1_{\mathcal{L}}$
  and let $\eta_s$ be a realiser for $1 \sqsubseteq f^{\sharp}_s f^{\natural}_s$. Now we must check the conditions of a relational subexponential linear combinatory algebra.
  $\bang_{\bf i} \sqsubseteq \bang_{\bf i} \bullet \bang_{\bf i}$ and $\bang_{\bf i} \sqsubseteq (x \mapsto \{ i \})$ are checked similarly to \cite[Proposition 8]{hoshino2007linear},
  so one can port similar proofs for $\bang_{\bf r} \sqsubseteq \bang_{\bf r} \bullet \bang_{\bf r}$ and $\bang_{\bf a} \sqsubseteq (x \mapsto \{i \})$.
  So the only claims one needs to prove are $\bang_{\bf i} \sqsubseteq \bang_{\bf r}$ and $\bang_{\bf i} \sqsubseteq \bang_{\bf a}$. We consider only the former clause
  since the latter is completely identical.

  The idea of the proof is inspired by Proposition~\ref{2:comonad:morphish:prop}. First of all, observe that $\mu_{\bf ir} \circ f_{\bf i}^{\sharp} \sqsubseteq f_{\bf r}^{\sharp}$. 
  \[
    \mu_{\bf ir} f_{\bf i}^{\sharp} \sqsubseteq f_{\bf r}^{\sharp} f_{\bf r}^{\natural} \mu_{\bf ir} f_{\bf i}^{\sharp}  = f_{\bf r}^{\sharp} f_{\bf i}^{\natural} f_{\bf i}^{\sharp} \sqsubseteq f_{\bf r}^{\sharp} \\
  \]

  $\bang_{\bf i} \sqsubseteq \bang_{\bf r}$ means $\bang_{\bf i} = f^{\natural}_{\bf i} f^{\sharp}_{\bf i} = f^{\natural}_{\bf r} \mu_{\bf ir} f^{\sharp}_{\bf i} \sqsubseteq f^{\natural}_{\bf r} f^{\sharp}_{\bf r}$,
  but $f^{\natural}_{\bf r} \mu_{\bf ir} f^{\sharp}_{\bf i} \sqsubseteq f^{\natural}_{\bf r} f^{\sharp}_{\bf r}$ as $\sqsubseteq$ respects composition 
  by \cite[Proposition 1.6.2]{van2008realizability}.
\end{proof}

Subexponential linear combinatory algebras are an important subcase of relational subexponential linear combinatory algebras that admit an equational axiomatisation, and, besides,
allow for extracting combinatory, BCK and BCW subalgebras from the corresponding subexponentials.

\begin{definition}
  Let $\mathcal{L} = (L, \cdot)$ be a BCI algebra and let $\bang_{\bf i}, \bang_{\bf a}, \bang_{\bf r} : \mathcal{L} \to \mathcal{L}$ be operators on $\mathcal{L}$,
  then a structure $(\mathcal{L}, \bang_{\bf i}, \bang_{\bf a}, \bang_{\bf r})$ is a \emph{subexponential linear combinatory algebra} if
  there are elements $k_{\bf i}$, $k_{\bf a}$, $w_{\bf i}$, $w_{\bf r}$, $(\varepsilon_s)_{s \in \{{\bf i}, {\bf a}, {\bf r} \}}$, $(\delta_s)_{s \in \{{\bf i}, {\bf a}, {\bf r} \}}$,
  $(\mathsf{m}^{s}_{s_1,s_2})_{s \preceq s_1,s_2 \in \{{\bf i}, {\bf a}, {\bf r} \}} \in \mathcal{L}$ such that the following identities are satisfied:
  \begin{multicols}{2}
    \begin{itemize}
      \item $\mathsf{m}^{s}_{s_1, s_2} \cdot \bang_{s_1} x \cdot \bang_{s_2} y = \bang_s (x \cdot y)$ for any $s \preceq s_1, s_2$,
      \item $\varepsilon_s \cdot \bang_s x = x$ for any $s$,
      \item $\delta_s \cdot \bang_s x = \bang_s \bang_s x$ for any $s$,
      \item $k_s \cdot x \cdot (\bang_s y) = x$ for $s = {\bf a}, {\bf i}$,
      \item $w_s \cdot x \cdot \bang_s y = x \cdot \bang_s y \cdot \bang_s y$ for $s = {\bf r}, {\bf i}$.
    \end{itemize}
  \end{multicols}
\end{definition}

\begin{prop}~\label{combinatory:decompose}
  Let $(\mathcal{L}, \bang_{\bf i}, \bang_{\bf a}, \bang_{\bf r})$ be a subexponential linear combinatory algebra. Put  $x \cdot_s y := x \cdot \bang_s y$ for any $s \in \{{\bf i}, {\bf r}, {\bf a} \}$. 
  Then $\mathcal{L}_{\bang_{\bf i}} = (\mathcal{L}, \cdot_{\bf i})$ is a combinatory algebra;
  $\mathcal{L}_{\bang_{\bf r}} = (\mathcal{L}, \cdot_{\bf r})$ is a BCW-algebra;
  $\mathcal{L}_{\bang_{\bf a}} = (\mathcal{L}, \cdot_{\bf a})$ is a BCK-algebra.

  Besides, $\mathcal{L}$ forms functional adjoint pairs with
  $\mathcal{L}_{\bang_{\bf i}}$, $\mathcal{L}_{\bang_{\bf r}}$ and $\mathcal{L}_{\bang_{\bf a}}$
  (denoted as $\mathsf{f}_s^{\natural} \dashv \mathsf{f}_s^{\sharp}$ for $s \in \{ {\bf i}, {\bf r}, {\bf a} \}$)
  such that there are functional applicative morphisms $\mu_{\bf ir} : (\mathcal{L}_{\bang_{\bf i}}, \cdot_{\bf i}) \to (\mathcal{L}_{\bang_{\bf r}}, \cdot_{\bf r})$
  and $\mu_{\bf ia} : (\mathcal{L}_{\bang_{\bf i}}, \cdot_{\bf i}) \to (\mathcal{L}_{\bang_{\bf a}}, \cdot_{\bf a})$
  such that $\mathsf{f}^{\natural}_{\bf i} = \mathsf{f}^{\natural}_{\bf r} \mu_{\bf ir}$ and 
  $\mathsf{f}^{\natural}_{\bf i} = \mathsf{f}^{\natural}_{\bf a} \mu_{\bf ia}$.
\end{prop}
\begin{proof}
  For $(\mathcal{L}, \cdot_{\bf i})$, see \cite[Theorem 3.7]{abramsky2002geometry} for the proof. 
  In $(\mathcal{L}, \cdot_{\bf r})$, $w$ is given as $w := \langle x \rangle \:\: w_{\bf r} \cdot (\varepsilon_{\bf r} \cdot x)$.
  In $\mathcal{L}_{\bang_{\bf a}}$, $k$ is given as $k := \langle x \rangle \:\: k_{\bf a} \cdot (\varepsilon_{\bf a} \cdot x)$.

  For each $s$, the adjoint pair $\mathsf{f}_s^{\natural} \dashv \mathsf{f}_s^{\sharp}$ is given by mappings
  $\mathsf{f}_s^{\natural} : \mathcal{L}_s \to \mathcal{L}$ and $\mathsf{f}_s^{\sharp} : \mathcal{L} \to \mathcal{L}_s$,
  where $\mathsf{f}_s^{\natural}(x) = x$ and $\mathsf{f}_s^{\sharp}(x) = \bang_s x$.
  $\mathsf{f}_s^{\natural}$ and $\mathsf{f}_s^{\sharp}$ are realised with realisers $r^{\natural}_{s}$ and $r^{\sharp}_{s}$:
  \begin{center}
    $r^{\natural}_{s} := \langle x \rangle \langle y \rangle \:\: x \cdot (\bang_s y)$

    $r^{\sharp}_s := \langle x \rangle_s \langle y \rangle_s \:\: \mathsf{m}_{s,s} \cdot (\varepsilon_s \cdot x) \cdot (\varepsilon_s \cdot y)$
  \end{center}
  Both $\mathsf{f}_s^{\natural}$ and $\mathsf{f}_s^{\sharp}$ are applicative morphisms.
  For $\mathsf{f}_s^{\natural}$, we need $a \cdot_s b = r^{\natural}_{s} \cdot a \cdot b$, but $r^{\natural}_{s} \cdot a \cdot b = a \cdot \bang_s b = a \cdot_s b$.
  For $\mathsf{f}_s^{\sharp}$, we need $!_s (a \cdot b) = r^{\sharp}_s \cdot_s \bang_s a \cdot_s \bang_s b$, but
  \[
  r^{\sharp}_s \cdot_s \bang_s a \cdot_s \bang_s b = r^{\sharp}_s \cdot \bang_s \bang_s a \cdot \bang_s \bang_s b =  \mathsf{m}_{s,s} \cdot (\varepsilon_s \cdot \bang_s \bang_s a) \cdot (\varepsilon_s \cdot \bang_s \bang_s b) = \mathsf{m}_{s,s} \cdot \bang_s a \cdot \bang_s b = \bang_s (a \cdot b)
  \]
  Now we must ensure $\mathsf{f}_s^{\natural}$ and $\mathsf{f}_s^{\sharp}$ form a comonadic operator.
  On the one hand, we have $\mathsf{f}_s^{\natural} \mathsf{f}_s^{\sharp} \sqsubseteq 1_{\mathcal{L}}$, which is witnessed by $\varepsilon_s$.
  On the other hand, we have $1_{\mathcal{L}_s} \sqsubseteq \mathsf{f}_s^{\sharp} \mathsf{f}_s^{\natural}$, which is witnessed by 
  $\langle x \rangle \:\: \varepsilon_s \cdot (\varepsilon_s \cdot x)$, as we need $r \in \mathcal{L}_s$ such that $r \cdot_s \bang_s a = a$, i.e.,
  $r \cdot \bang_s \bang_s a = a$.
  Besides, $\mathsf{f}_s^{\natural} \mathsf{f}_s^{\sharp} \sqsubseteq \mathsf{f}_s^{\natural} \mathsf{f}_s^{\sharp} \mathsf{f}_s^{\natural} \mathsf{f}_s^{\sharp}$
  is witnessed by $\delta_s$.

  Put $\mu_{\bf ir}(x) = x$ and $\mu_{\bf ia}(x) = x$. Then the required equalities become tautological. Put the following realisers:
  \begin{center}
  $r_{\bf ir} := \langle x \rangle \langle y \rangle \:\: (\varepsilon_{\bf r} \cdot x) \cdot \bang_{\bf i} (\varepsilon_{\bf r} \cdot y)$ \\

  $r_{\bf ia} := \langle x \rangle \langle y \rangle \:\: (\varepsilon_{\bf a} \cdot x) \cdot \bang_{\bf i} (\varepsilon_{\bf a} \cdot y)$
  \end{center}
  $\mu_{\bf ir}$ is also an applicative morphism since:
  \[ 
  r_{\bf ir} \cdot_{\bf r} a \cdot_{\bf r} b = (\langle x \rangle \langle y \rangle \:\: (\varepsilon_{\bf r} \cdot x) \cdot \bang_{\bf i} (\varepsilon_{\bf r} \cdot y)) \cdot \bang_{\bf r} a \cdot \bang_{\bf r} b = (\varepsilon_{\bf r} \cdot \bang_{\bf r} a) \cdot \bang_{\bf i} (\varepsilon_{\bf r} \cdot \bang_{\bf r} b) = a \cdot \bang_{\bf i} b = a \cdot_{\bf i} b 
  \]
\end{proof}

Now we finally show we can obtain $\Sigma_3$-assemblages from relational subexponential linear combinatory algebras.

\begin{theorem}~\label{combinatory:assemblage}
  Let $(\mathcal{L}, \bang_{\bf i}, \bang_{\bf r}, \bang_{\bf a})$ be a relational subexponential linear combinatory algebra,
  the categories $\operatorname{Asm}(\mathcal{L})$ and $\operatorname{Mod}(\mathcal{L})$ are endowed with the structure of a $\Sigma_3$-assemblage.
  Therefore any relational subexponential linear combinatory algebra induces a categorical model of ${\bf SILL}(\lambda)_{\Sigma_3}$.
\end{theorem}

\begin{proof}
  $\operatorname{Asm}(\mathcal{L})$ is an SMCC as far as $\mathcal{L}$ is BCI. We also lift 
  comonadic applicative morphisms $\bang_{\bf i}$, $\bang_{\bf r}$ and $\bang_{\bf a}$ to 
  symmetric lax monoidal comonads ${\bang_{\bf i}}_*, {\bang_{\bf r}}_*, {\bang_{\bf a}}_* : \operatorname{Asm}(\mathcal{L}) \to \operatorname{Asm}(\mathcal{L})$.
  ${\bang_{\bf i}}_*$ is equipped with natural transformations $\mathsf{c}^{\bf i}$ and $\mathsf{d}^{\bf i}$ 
  with the components $\mathsf{c}^{\bf i}_A : {\bang_{\bf i}}_* A \to {\bang_{\bf i}}_* A \otimes {\bang_{\bf i}}_* A$ and 
  $\mathsf{d}^{\bf i}_A : {\bang_{\bf i}}_* A \to \mathds{1}$ given as $\mathsf{c}^{\bf i}_A : x \mapsto (x,x)$ and $\mathsf{d}^{\bf i}_A : x \mapsto *$, whose
  realisers are realisers of $\bang_{\bf i} \sqsubseteq \bang_{\bf i} \bullet \bang_{\bf i}$ and $\bang_{\bf i} \sqsubseteq (x \mapsto \{ i \})$ respectively.
  Similarly, ${\bang_{\bf r}}_*$ and ${\bang_{\bf a}}_*$ are equipped with natural transformations
  $\mathsf{c}^{\bf r}$ and $\mathsf{d}^{\bf a}$ given as $\mathsf{c}^{\bf r}_A : x \mapsto (x,x)$ and $\mathsf{d}^{\bf a}_A : x \mapsto *$
  realised by realisers of $\bang_{\bf r} \sqsubseteq \bang_{\bf r} \bullet \bang_{\bf r}$ and  $\bang_{\bf a} \sqsubseteq (x \mapsto \{ i \})$.
  Therefore, ${\bang_{\bf i}}_*$ is an exponential comonad, ${\bang_{\bf r}}_*$ is relevant and ${\bang_{\bf a}}_*$ is affine. Checking the coherence conditions is routine.
  $\bang_{\bf i} \sqsubseteq \bang_{\bf r}$ and $\bang_{\bf i} \sqsubseteq \bang_{\bf a}$ are lifted to the symmetric lax monoidal natural transformations 
  $\mu_{\bf ir} : {\bang_{\bf i}}_* \Rightarrow {\bang_{\bf r}}_*$ and $\mu_{\bf ia} : {\bang_{\bf i}}_* \Rightarrow {\bang_{\bf a}}_*$. 
  Besides, $\mu_{\bf ir}$ and  $\mu_{\bf ia}$ are symmetric lax monoidal comonad morphisms. In particular, the following diagram commutes (the  $\mu_{\bf ia}$ clause is similar):
  \[
  \begin{tikzcd}
  {\bang_{\bf i}}_* \ar[d, "\delta^{\bf i}"'] \ar[rrrr, "\mu_{\bf ir}"] &&&& {\bang_{\bf r}}_* \ar[d, "\delta^{\bf r}"]                                                    & {\bang_{\bf i}}_* \ar[rr, "\mu_{\bf ir}"] \ar[dr, "\varepsilon^{\bf i}"'] && {\bang_{\bf r}}_*  \ar[dl, "\varepsilon^{\bf r}"] \\
  {\bang_{\bf i}}_*^2 \ar[rr, "\mu_{\bf ir} {\bang_{\bf i}}_*"'] && {\bang_{\bf r}}_* {\bang_{\bf i}}_* \ar[rr, "{\bang_{\bf r}}_* \mu_{\bf ir}"'] && {\bang_{\bf r}}_*^2  && 1_{\operatorname{Asm}(\mathcal{L})}       
  \end{tikzcd}
  \]
  Recall that $\sqsubseteq$ respects the composition of applicative morphisms, so the left-hand side diagram commutes since
  $\bang_{\bf i} \sqsubseteq {\bang_{\bf i}}^2 \sqsubseteq \bang_{\bf r} {\bang_{\bf i}} \sqsubseteq {\bang_{\bf r}}^2$ as well as $\bang_{\bf i} \sqsubseteq \bang_{\bf r} \sqsubseteq \bang_{\bf r}^{2}$.
  The right-hand side diagram holds similarly since $\bang_{\bf i} \sqsubseteq \bang_{\bf r} \sqsubseteq 1_{\mathcal{L}}$.
  The rest is
  to ensure that the following square commutes. The affine clause is considered similarly:
  \[
  \begin{tikzcd}
    {\bang_{\bf i}}_* A \ar[d, "{\mu_{\bf ir}}_A"'] \ar[rr, "\mathsf{c}^{\bf i}"] && {\bang_{\bf i}}_* A \otimes {\bang_{\bf i}}_* A \ar[d, "{\mu_{\bf ir}}_A \otimes {\mu_{\bf ir}}_A"] \\
    {\bang_{\bf r}}_* A \ar[rr, "\mathsf{c}^{\bf r}"'] && {\bang_{\bf r}}_* A \otimes {\bang_{\bf r}}_* A
  \end{tikzcd}
  \]
  But the underlying map of the right-hand side arrow is $(x, y) \mapsto (x, y)$, so the diagram becomes a tautology.

  The statement for the category of modest sets is concluded by analogy with \cite[Proposition 14]{hoshino2007linear}.
\end{proof}

Finally, we discuss some examples from computation theory that one considers (relational) subexponential linear combinatory algebras, each of which gives
a model of ${\bf SILL}(\lambda)_{\Sigma_3}$ by Theorem~\ref{combinatory:assemblage}.

\begin{example}~\label{term:combinatory}
  The simplest example of a subexponential linear combinatory algebra comes from the term calculus for ${\bf SILL}(\lambda)_{{\Sigma}_3}$.
  In particular, the syntactic subexponential linear combinatory algebra $\mathcal{L}_{{\bf SILL}(\lambda)_{{\Sigma}_3}}$ on closed terms of the ${\bf SILL}(\lambda)_{{\Sigma}_3}$
  calculus modulo conversion $\equiv$ is given by the following combinators.
  \begin{itemize}
    \item $\bang_s : M(\vec{x}) \mapsto \bang_s (\lambda \vec{x}. M) \: {\bf with} \: \underline{\:\:} = \underline{\:\:}$ 
    \footnote{As a matter of fact, this definition of $\bang_s$ encodes the Curry-Howard counterpart of the modal necessitation rule: from $\vdash A$ infer $\vdash \bang_s A$.},
    \item $i = \lambda x. x$, $b = \lambda f g x. f (g x)$, $c = \lambda f x y. f y x$,
    \item $M \cdot N := (\lambda f. \lambda x. f x) \: M \: N$,
    \item $\mathsf{m}^s_{s_1,s_2} := \lambda f. \lambda x. \bang_s \: ({\bf der}_{s_1} (f')) ({\bf der}_{s_2} (x')) \:  {\bf with} \: f', x' = f,x$ for $s \preceq s_1, s_2$,
    \item $\varepsilon_s := \lambda x. {\bf der}_s(x)$ for any $s$,
    \item $\delta_s := \lambda x. \bang_s \: x \: {\bf with} \: x = x$,
    \item $w_s := \lambda f. \lambda z. {\bf let}_s \: x, y = z \: {\bf in} \: f \: x \: y$ for $s = {\bf i}, {\bf r}$,
    \item $k_s := \lambda x. \lambda y. {\bf del}_s(y; x)$ for $s = {\bf i}, {\bf a}$.
  \end{itemize}
  The axioms of a subexponential linear combinatory algebra follow from the corresponding proof normalisation conversions. Therefore, we have a syntactic realisability model
  $\operatorname{Asm}(\mathcal{L}_{{\bf SILL}(\lambda)_{{\Sigma}_3}})$ of ${\bf SILL}(\lambda)_{{\Sigma}_3}$.
\end{example}

\begin{example}
  This example is the categorification of Example~\ref{term:combinatory}. 
  Given a $\Sigma_3$-assemblage $(\mathcal{C}, \bang_{\bf i}, \bang_{\bf r}, \bang_{\bf a}, \mu_{\bf ir}, \mu_{\bf ia})$
  with a reflexive object $V$ with retractions $v_1 : V \multimap V \triangleleft V : v_2$ and $p_s : \bang_s V \triangleleft V : q_s$ for $s \in \{ {\bf i}, {\bf r}, {\bf a} \}$. 
  Let $f, g \in \mathcal{C}(\mathds{1}, V)$, then put $f \cdot g := \operatorname{ev} ((v_1 \circ f) \otimes g)$ and 
  $\Box_s(f) := p_s \circ (\bang_s f) \circ \mathsf{m}^{s}_{\mathds{1}}$. Then $(\mathcal{C}(\mathds{1}, V), \cdot, (\Box_s)_{s \in \{ {\bf i}, {\bf r}, {\bf a} \}})$
  forms a subexponential linear combinatory algebra. The proof is transferable from \cite[Theorem 3.5]{abramsky2002geometry}.
\end{example}

\begin{example}
  This example adapts the example of a linear combinatory algebra (i.e. a BCI algebra with a single $\bang_{\bf i}$) from \cite[Example 5.3]{hoshino2007linear}. One can also think of this 
  subexponential linear combinatory algebra as the subexponential linearisation of Scott's graph model widely discussed in recursion theory and
  realisability, see \cite{scott1975lambda}.
  First of all, recall that $\mathcal{P}(\omega)$ is a BCI-algebra since $\omega$ is an object 
  in the compact closed category ${\bf Rel}$ such that $\omega \times \omega \triangleleft \omega$ and $\omega^* = \omega$.
  Let $\langle .,. \rangle : \omega \times \omega \to \omega$ denote a bijective pair enumeration function and let 
  $e : \omega \to \mathcal{P}_{fin}(\omega)$ denote a bijective encoding of finite subsets of $\omega$. 
  Then $\mathcal{P}(\omega)$ forms a subexponential linear combinatory algebra with the following structure for $A, B \subseteq \omega$:
  \begin{itemize}
    \item $A \cdot B = \{ n \: | \: \langle m, n \rangle \in A, m \in B \}$,
    \item $\bang_{\bf i}(A) = \{ n \: | \: e(n) \subseteq A \}$,
    \item $\bang_{\bf r}(A) = \{ n \: | \: \emptyset \neq e(n) \subseteq A \}$,
    \item $\bang_{\bf a}(A) = \{ n \: | \: e(n) \subseteq A \land |e(n)| \leq 1 \}$,
    \item $\mathsf{m}^{s}_{s_1,s_2} = \{ \langle m, \langle n, p \rangle \rangle \: | \: e(p) \subseteq e(m) \cdot e(n) \land \varphi_{s_1}(m) \land \varphi_{s_2}(n) \land \varphi_s(p) \}$ for $s \preceq s_1, s_2$,
    where $\varphi_{s_1}(.), \varphi_{s_2}(.), \varphi_s(.)$ are predicates corresponding to the index, in particular 
    \begin{center}
    $\varphi_{\bf i}(x) = e(x) \subseteq \omega$, \\
    $\varphi_{\bf r}(x) = e(x) \subseteq \omega \land e(x) \neq \emptyset$, \\
    $\varphi_{\bf a}(x) = e(x) \subseteq \omega \land |e(x)| \leq 1$.
    \end{center}
    \item $\delta_{\bf i} = \{ \langle m, n \rangle \: | \: \cup_{k \in e(n)} e(k) \subseteq e(m) \}$, 
    \item $\delta_{\bf r} = \{ \langle m, n \rangle \: | \: \cup_{k \in e(n)} e(k) \subseteq e(m), e(n) \neq \emptyset, \forall k \in e(n) \: e(k) \neq \emptyset \}$,
    \item $\delta_{\bf a} = \{ \langle m, n \rangle \: | \: \cup_{k \in e(n)} e(k) \subseteq e(m), |e(n)| \leq 1 \land k \in e(n) \to |e(k)| \leq 1 \}$,
    \item $\varepsilon_{s} = \{ \langle m, n \rangle \: | \: n \in e(m) \}$ for any $s$,
    \item $w_{\bf i} = \{ \langle k, \langle l, m \rangle \rangle \: | \: k = \langle i, \langle j, m \rangle \rangle \land e(i) \cup e(j) \subseteq e(l) \}$,
    \item $w_{\bf r} = \{ \langle k, \langle l, m \rangle \rangle \: | \: k = \langle i, \langle j, m \rangle \rangle \land e(i) \cup e(j) \subseteq e(l), e(i), e(j) \neq \emptyset \}$,
    \item $k_{\bf i} = k_{\bf a} = \{ \langle m, \langle n, m\rangle \rangle \: | \: n,m \in \omega \}$.
  \end{itemize}
  The above also works if we restrict $\mathcal{P}(\omega)$ to only recursively enumerable subsets of $\omega$. Let $\mathcal{P}_{re,li}(\omega)$ denote such a subexponential linear combinatory algebra.
\end{example}

\begin{example}
  As Longley showed in \cite[Proposition 3.3.7]{longley1995realizability}, there is an adjoint pair 
  $\kappa^{\natural} \dashv \kappa^{\sharp} : \mathcal{P}_{re}(\omega) \to \mathcal{K}_1$, where $\mathcal{P}_{re}(\omega)$ is Scott's graph model on recursively enumerable sets.
  Further, $\mathcal{P}_{re,li}(\omega)$ is a subexponential linear combinatory algebra, so, by Proposition~\ref{combinatory:decompose}, 
  we have adjoint pairs $f_{\bf i}^{\natural} \dashv f_{\bf i}^{\sharp}$ between $(\mathcal{P}_{re, li}(\omega), \cdot)$ and $(\mathcal{P}_{re, li}(\omega), \cdot_{\bf i})$,
  $f_{\bf r}^{\natural} \dashv f_{\bf r}^{\sharp}$ between $(\mathcal{P}_{re, li}(\omega), \cdot)$ and $(\mathcal{P}_{re, li}(\omega), \cdot_{\bf r})$ and 
  $f_{\bf a}^{\natural} \dashv f_{\bf a}^{\sharp}$ between $(\mathcal{P}_{re, li}(\omega), \cdot)$ and $(\mathcal{P}_{re, li}(\omega), \cdot_{\bf a})$
  with transition applicative morphisms $\widehat{\mu_{\bf ir}} : (\mathcal{P}_{re, li}(\omega), \cdot_{\bf i}) \to (\mathcal{P}_{re, li}(\omega), \cdot_{\bf r})$
  and $\widehat{\mu_{\bf ia}} : (\mathcal{P}_{re, li}(\omega), \cdot_{\bf i}) \to (\mathcal{P}_{re, li}(\omega), \cdot_{\bf a})$.

  But note that $(\mathcal{P}_{re}(\omega), \cdot)$ and $(\mathcal{P}_{re,li}(\omega), \cdot_{\bf i})$ are equivalent by \cite[Proposition 3.3.4]{longley1995realizability}
  as far as $(\mathcal{P}_{re,li}(\omega), \cdot_{\bf i})$ is a combinatory algebra by Proposition~\ref{combinatory:decompose}, so both combinatory algebras coincide up to the computable
  encodings of finite sets and tuples.
  Therefore, we have the adjoint pair $\kappa'^{\natural} \dashv \kappa'^{\sharp} : (\mathcal{P}_{re,li}(\omega), \cdot_{\bf i}) \to \mathcal{K}_1$,
  the adjoint pair composing $\kappa^{\natural} \dashv \kappa^{\sharp}$ with the adjoint pair establishing the equivalence between $ \mathcal{P}_{re}(\omega)$ and $(\mathcal{P}_{re,li}(\omega), \cdot_{\bf i})$.
  Therefore, $(\mathcal{P}_{re,li}(\omega), \bang'_{\bf i}, \bang_{\bf r}, \bang_{\bf a})$ is a 
  relational subexponential linear combinatory algebra by Proposition~\ref{combinatory:bouton},
  where $\bang'_{\bf i} := f^{\natural}_{\bf i} \kappa'^{\natural} \kappa'^{\sharp} f^{\sharp}_{\bf i} $
  and $\bang'_{\bf i} \sqsubseteq \bang_{\bf r}$ and $\bang'_{\bf i} \sqsubseteq \bang_{\bf a}$
  are realised just by the transitivity of $\sqsubseteq$.
\end{example}
\section{Conclusion}~\label{conclusion}

In this paper, we described the Curry-Howard correspondence for intuitionistic multiplicative linear logic with subexponential modalities
and suggested a possible way of specifying functional algorithms maintaining several resource usage policies within a single setting.
We expanded the concept of a linear category for a polymodal case with multiple comonadic subexponential modalities
by coining the concept of a $\Sigma$-assemblage. We not only generalised Benton's linear-non-linear models by suggesting the notion of a $\Sigma$-bouton, but 
also showed that the 2-category of all $\Sigma$-assemblages for any $\Sigma$ 1,2-fully faithfully embeds into the 2-category of $\Sigma$-boutons.
This 2-category is constructed as the Eilenberg-Moore 2-category of coalgebras over the strict 2-comonad on the full subcategory of cocones over indexed families
of symmetric monoidal categories and transition strict symmetric monoidal functors between them. The concept of a $\Sigma$-assemblage seems to be rather 
natural as there are examples of this concept in algebraic geometry, computability theory and game semantics. 
We also suggested the realisability interpretation for our term calculus by generalising the concepts of linear combinatory algebras as well as 
relational linear combinatory algebras. 

There are several questions we would like to leave for further research. First of all, non-commutative generalisations of $\Sigma$-assemblages
would provide denotational semantics for the Lambek calculus with subexponentials as it is described in \cite{kanovich2017undecidability} and \cite{kanovich2019subexponentials}.
Of course, one can give a rather natural non-symmetric generalisation, but providing concrete instances of non-commutative assemblages
from, for example, non-commutative geometry would be of a considerable interest. Also, it is a well-known fact that a realisability topos can be defined
as the completion of the category of assemblies over a PCA, see \cite{van2008realizability}. So it would be of interest as well to characterise completions of the 
category of assemblies over relational subexponential linear combinatory algebras and what sort of syntax such categories can potentially describe. 
Besides, it might be worth relating our approach to other treatments of linear realisability for dependent type theory, such as the recent realisability model 
of linear dependent type theory from \cite{speight2026impredicativity}.
\begin{appendices}
\section{Proof Normalisation Rules}~\label{appendix}

As for linear type theory with the single exponential modality $\bang$ considered in \cite{benton1993term}, one can distinguish the $\beta\eta$-reduction relation 
and the commuting conversion relation which are conversions between terms arising from the proof normalisation analysis.
Although all those commuting conversions can be transferred to ${\bf SILL}(\lambda)_{\Sigma}$, we consider a simpler approach
that combines \cite{troelstra1995natural} and \cite[§4.3]{UCAM-CL-TR-346}. 
Let us specify the term conversions for ${\bf SILL}(\lambda)_{\Sigma}$.

Before specifying the conversion rules, let us introduce some convenient shorthand notation for a tuple of terms $\vec{M} = (M_0, \ldots, M_n)$:
\begin{itemize}
  \item ${\bf del}_{s_0, \ldots, s_n}(\vec{M}; N) := {\bf del}_{s_0}(M_0; ({\bf del}_{s_1}(M_1; \ldots ({\bf del}_{s_n}(M_n; N))\ldots)))$,
  \item ${\bf let}_{s_0, \ldots, s_n} \: \vec{y}, \vec{z} = \vec{M} \: {\bf in} \: N := {\bf let}_{s_0} \: y_0, z_0 = M_0 \: {\bf in} \: ({\bf let}_{s_1} \: y_1, z_1 = M_1 \: {\bf in} \: (\ldots ({\bf let}_{s_n} \: y_n, z_n = M_n \: {\bf in} \: N) \ldots))$.
\end{itemize}

Figure~\ref{beta:conv} describe the computational $\beta$-reduction rules:

\begin{figure}[H]
  \hrule
  \centering
  \caption{$\beta$-reduction}~\label{beta:conv}
  \begin{small}
  \begin{itemize}
    \item[$(\multimap\beta)$] $(\lambda x. M) N \triangleleft M[x := N]$,
    \item[$(\mathds{1}\beta)$] ${\bf let} \: {\bf 1} = {\bf 1} \: {\bf in} \: N \triangleleft N$,
    \item[$(\otimes\beta)$] ${\bf let} \: x \otimes y = M \otimes N \: {\bf in} \: P \triangleleft P [x := M, y := N]$,
    \item[$(\bang_s\beta)$] ${\bf der}_{s} \: (\bang_{s} \: N \: {\bf with} \: \vec{x} = \vec{M}) \triangleleft N [\vec{x} := \vec{M}]$ for $s \in \Sigma$,
    \item[$({\bf W}\beta)$] Let $s \preceq s_1, \ldots, s_n$ and $s \in W$, then
      ${\bf del}_{s} ({\bang_s} N \: {\bf with} \: \vec{x} = \vec{M}; N') \triangleleft {\bf del}_{s_1, \ldots, s_n}(\vec{M}; N')$
    \item[$({\bf C}\beta)$] Let $s \in C$
    \[\begin{array}{lll}
    & {\bf let}_{s} \: y, z = (\bang_{s} M \: {\bf with} \: \vec{x} = \vec{M'}) \: {\bf in} \: N \triangleleft & \\
    & \:\: {\bf let}_{s, \vec{s'}, \vec{s''}} \: \vec{x'}, \vec{x''} = \vec{M'} \: {\bf in} \: N [ y:= \bang_s M  \: {\bf with} \: \vec{x} = \vec{x'}, z:= \bang_s M  \: {\bf with} \: \vec{x} = \vec{x''}]. &
    \end{array}
    \]
  \end{itemize}
\end{small}
  \hrule
\end{figure}

The extensional $\eta$-reduction rules from Figure~\ref{eta:conv} can be considered as \emph{simplification conversions} in the derivations
where an introduction rule was used after applying the corresponding elimination rule. 

\begin{figure}[H]
  \hrule
  \centering
  \caption{$\eta$-reduction}~\label{eta:conv}
  \begin{small}
  \begin{itemize}
    \item[$(\multimap \eta)$] $(\lambda x. M x) \triangleleft M$ if $x$ is not free in $M$,
    \item[$(\bang_s \eta)$] $\bang_s ({\bf der}_{s} \: x) \: {\bf with} \: x = M \triangleleft M$,
    \item[$(\otimes \eta)$] ${\bf let} \: x \otimes y = M \: {\bf in} \: N[z := x \otimes y] \triangleleft N [z:= M]$.
  \end{itemize}
  \end{small}
  \hrule
\end{figure}

The conversions from Figure~\ref{naturality:conv} syntactically depict that the tensor product and unit types
alongside the weakening and contraction operations
satisfy the categorical naturality property. From a proof-theoretic angle,
those conversions reflect \emph{permutation conversions} where we permute minor premises of an elimination rule in a certain way.

\begin{figure}[!htbp]
  \hrule
  \centering
  \caption{The naturality conversions for $\otimes$, $\mathds{1}$, weakening and contraction}~\label{naturality:conv}
  \begin{small}
  \begin{itemize}
    \item[$(\otimes_{\bf nat})$] $P[w := {\bf let} \: x \otimes y = M \: {\bf in} \: N] \triangleleft {\bf let} \: x \otimes y = M \: {\bf in} \: P [w := N]$,
    \item[$(\mathds{1}_{\bf nat})$] $P [w := {\bf let} \: {\bf 1} = M \: {\bf in} \: N] \triangleleft {\bf let} \: {\bf 1} = M \: {\bf in} \: P[w := N]$,
    \item[$({\bf del}_s)_{\bf nat}$] Let $s \in W$, then $P [x := {\bf del}_s(M; N)] \triangleleft {\bf del}_s(M; P [x := N])$,
    \item[$({\bf let}_s)_{\bf nat}$] Let $s \in C$, then $P [z := {\bf let}_s \: x, y = M \: {\bf in} \: N] \triangleleft {\bf let}_s \: x, y = M \: {\bf in} \: P [z := N]$.
  \end{itemize}
  \end{small}
  \hrule
\end{figure}

The conversions from the rest of the figures elaborate on how copying, deleting and promotion term constructors commute with one another.
In particular, the conversions having the form of $({\bf W}_{\operatorname{conv}})$ say that the deletion operation has a coalgebraic nature, 
but we can also view it as a proof conversion allowing us to avoid some redundant promotions when we combine
the promotion and deletion operations. Categorically, the conversions $({\bf C}_{{\operatorname{conv}_1}})$, $({\bf C}_{{\operatorname{conv}_2}})$ and $({\bf C}_{{\operatorname{conv}_3}})$ 
demonstrate the coalgebraic and cocommutative comonoid nature of the contraction operation, but, as above, 
we think of them as a separate sort of prooftree transformation that can be reduced to none of the above. The conversion $({\bf CW}_{\operatorname{conv}})$ specifies how exactly 
the exponential deletion and copying terms commute with each other. One can think of this conversion
as the syntactic encoding of one of the cocommutative comonoid axioms.

\begin{figure}[!htbp]
  \hrule
  \centering
  \caption{The conversions for weakening and contraction}~\label{structural:conv}
  \begin{small}
  \begin{itemize}
    \item[$({\bf W}_{\operatorname{conv}})$] Let $s, t \in \Sigma$, $s \in W$ and $t \preceq s$, then
    $\bang_{t}({\bf del}_{s}(x; M)) \: {\bf with} \: x, \vec{x} = N, \vec{N} \triangleleft {\bf del}_{s}(N; (\bang_{t} M \: {\bf with} \: \vec{x} = \vec{N}))$,
    \item[$({\bf C}_{{\operatorname{conv}_1}})$]~\label{contracting:conv:1} Let $s \preceq s_1$ and $s_1 \in C$

  \[\begin{array}{lll}
    & \bang_s \: ({\bf let}_{s_1} \: y, z = x_1 \: {\bf in} \: N) \: {\bf with} \: x_1, \ldots, x_n = M_1, \ldots, M_n \triangleleft & \\
    & \:\:\:\: {\bf let}_{s_1} \: y', z' = M_1 \: {\bf in} \: (\bang_s N \: {\bf with} \: y, z, x_2, \ldots, x_n = y', z', M_2, \ldots, M_n) & 
  \end{array}
  \]
    \item[$({\bf C}_{{\operatorname{conv}_2}})$]~\label{contracting:conv:2} ${\bf let}_s \: x, y = M \: {\bf in} \: N \triangleleft {\bf let}_s \: y, x = M \: {\bf in} \: N$,
    \item[$({\bf C}_{{\operatorname{conv}_3}})$]~\label{contracting:conv:3} 
    ${\bf let}_{s} \: x, w = \: M \: {\bf in} \: ({\bf let}_{s} \: y, z = w \: {\bf in} \: N) \triangleleft {\bf let}_{s} \: w, z = M \: {\bf in} \: ({\bf let}_{s} \: x,y = w \: {\bf in} \: N)$,
    \item[$({\bf CW}_{\operatorname{conv}})$] ${\bf let}_{s} \: x, y = M \: {\bf in} \: {\bf del}_{s}(x; N) \triangleleft N[y := M]$ for $s \in W \cap C$.
  \end{itemize}
  \end{small}
  \hrule
\end{figure}

The conversion $({\bang_s}_{\bf conv})$ was introduced by Troelstra for intuitionistic
linear logic, see \cite[X1]{troelstra1995natural}. We introduce the version of that conversion for all subexponentials in a more general form. The conversion 
looks counterintuitive, but it encodes the fact that every subexponential, as a symmetric lax monoidal functor, respects associators.

\begin{figure}[H]
  \hrule
  \caption{Extra $\bang_s$-conversion}~\label{box:conv}
  \begin{itemize}
    \item[$(\bang_{t})_{\bf conv}$] $\begin{array}{lll}
        & \bang_{t} M \: {\bf with} \: \vec{y'}, y, \vec{y''} = \vec{M'}, (\bang_{s} N \: {\bf with} \: \vec{x} = \vec{M}), \vec{M''} \triangleleft & \\
        & \:\:\:\: \bang_{t} (M [y := \bang_{s} N \: {\bf with} \: \vec{x'} = \vec{x}]) \: {\bf with} \: \vec{y'}, \vec{x'}, \vec{y''} = \vec{M'}, \vec{M}, \vec{M''} &
      \end{array}$
  \end{itemize}
  \hrule
\end{figure}
\end{appendices}

\bibliographystyle{alpha}
\bibliography{main}

@article{abramsky2011introduction,
  title={Introduction to categories and categorical logic},
  author={Abramsky, Samson and Tzevelekos, Nikos},
  journal={New structures for physics},
  pages={3--94},
  year={2011},
  publisher={Springer}
}

@article{nigam2017subexponentials,
  title={On subexponentials, focusing and modalities in concurrent systems},
  author={Nigam, Vivek and Olarte, Carlos and Pimentel, Elaine},
  journal={Theoretical Computer Science},
  volume={693},
  pages={35--58},
  year={2017},
  publisher={Elsevier}
}

@article{kanovich2019subexponentials,
  title={Subexponentials in non-commutative linear logic},
  author={Kanovich, Max and Kuznetsov, Stepan and Nigam, Vivek and Scedrov, Andre},
  journal={Mathematical Structures in Computer Science},
  volume={29},
  number={8},
  pages={1217--1249},
  year={2019},
  publisher={Cambridge University Press}
}

@article{troelstra1995natural,
  title={Natural deduction for intuitionistic linear logic},
  author={Troelstra, Anne Sjerp},
  journal={Annals of pure and applied logic},
  volume={73},
  number={1},
  pages={79--108},
  year={1995},
  publisher={Elsevier}
}

@book{Troelstra1992-TROLOL,
	author = {Anne Sjerp Troelstra},
	editor = {},
	publisher = {Center for the Study of Language and Information Publications},
	title = {Lectures on Linear Logic},
	year = {1992}
}

@inproceedings{hoshino2007linear,
  title={Linear realizability},
  author={Hoshino, Naohiko},
  booktitle={International Workshop on Computer Science Logic},
  pages={420--434},
  year={2007},
  organization={Springer}
}

@article{abramsky2005linear,
  title={Linear realizability and full completeness for typed lambda-calculi},
  author={Abramsky, Samson and Lenisa, Marina},
  journal={Annals of Pure and Applied Logic},
  volume={134},
  number={2-3},
  pages={122--168},
  year={2005},
  publisher={Elsevier}
}

@book{570645,
   author = {Rosenthal, Kimmo I.},
   address = {Essex},
   keywords = {quantale},
   location = {Essex},
   isbn = {0582064236},
   publisher = {Longman Scientific \& Technical},
   title = {Quantales and their applications},
   year = {1990}
}

@article{yetter1990quantales,
  title={Quantales and (noncommutative) linear logic},
  author={Yetter, David N},
  journal={The Journal of Symbolic Logic},
  volume={55},
  number={1},
  pages={41--64},
  year={1990},
  publisher={Cambridge University Press}
}

@inproceedings{benton1993linear,
  title={Linear $\lambda$-calculus and categorical models revisited},
  author={Benton, Nick and Bierman, Gavin and De Paiva, Valeria and Hyland, Martin},
  booktitle={Computer Science Logic: 6th Workshop, CSL'92 San Miniato, Italy, September 28--October 2, 1992 Selected Papers 6},
  pages={61--84},
  year={1993},
  organization={Springer}
}

@article{abramsky2002geometry,
  title={Geometry of interaction and linear combinatory algebras},
  author={Abramsky, Samson and Haghverdi, Esfandiar and Scott, Philip},
  journal={Mathematical Structures in Computer Science},
  volume={12},
  number={5},
  pages={625--665},
  year={2002},
  publisher={Cambridge University Press}
}

@article{bernardy2017linear,
  title={Linear Haskell: practical linearity in a higher-order polymorphic language},
  author={Bernardy, Jean-Philippe and Boespflug, Mathieu and Newton, Ryan R and Peyton Jones, Simon and Spiwack, Arnaud},
  journal={Proceedings of the ACM on Programming Languages},
  volume={2},
  number={POPL},
  pages={1--29},
  year={2017},
  publisher={ACM New York, NY, USA}
}

@inproceedings{benton1993term,
  title={A term calculus for intuitionistic linear logic},
  author={Benton, Nick and Bierman, Gavin and De Paiva, Valeria and Hyland, Martin},
  booktitle={Typed Lambda Calculi and Applications: International Conference on Typed Lambda Calculi and Applications TLCA'93 March, 16--18, 1993, Utrech, The Netherlands Proceedings 1},
  pages={75--90},
  year={1993},
  organization={Springer}
}

@article{jacobs1994semantics,
  title={Semantics of weakening and contraction},
  author={Jacobs, Bart},
  journal={Annals of pure and applied logic},
  volume={69},
  number={1},
  pages={73--106},
  year={1994},
  publisher={Elsevier}
}

@inproceedings{benton1994mixed,
  title={A mixed linear and non-linear logic: Proofs, terms and models},
  author={Benton, Nick},
  booktitle={International Workshop on Computer Science Logic},
  pages={121--135},
  year={1994},
  organization={Springer}
}

@TechReport{UCAM-CL-TR-346,
  author =	 {Bierman, G.M.},
  title = 	 {{On intuitionistic linear logic}},
  year = 	 1994,
  month = 	 aug,
  url = 	 {https://www.cl.cam.ac.uk/techreports/UCAM-CL-TR-346.pdf},
  institution =  {University of Cambridge, Computer Laboratory},
  doi = 	 {10.48456/tr-346},
  number = 	 {UCAM-CL-TR-346}
}

@article{mellies2009categorical,
  title={Categorical semantics of linear logic},
  author={Melli{\`e}s, Paul-Andr{\'e}},
  journal={Panoramas et syntheses},
  volume={27},
  pages={15--215},
  year={2009}
}

@article{seely1989linear,
  title={Linear logic,*-autonomous categories and cofree coalgebras},
  author={Seely, Robert AG},
  journal={Categories in computer science and logic},
  volume={92},
  pages={371--382},
  year={1989},
  publisher={Contemporary Mathematics}
}

@book{barr2000toposes,
  title={Toposes, triples, and theories},
  author={Barr, Michael and Wells, Charles},
  year={2000},
  publisher={Springer-Verlag}
}

@article{hindley1989bck,
  title={{BCK}-combinators and linear $\lambda$-terms have types},
  author={Hindley, J Roger},
  journal={Theoretical Computer Science},
  volume={64},
  number={1},
  pages={97--105},
  year={1989},
  publisher={Elsevier}
}

@book{van2008realizability,
  title={Realizability: an introduction to its categorical side},
  author={Van Oosten, Jaap},
  year={2008},
  publisher={Elsevier}
}

@article{reus1999realizability,
  title={Realizability models for type theories},
  author={Reus, Bernhard},
  journal={Electronic Notes in Theoretical Computer Science},
  volume={23},
  number={1},
  pages={128--158},
  year={1999},
  publisher={Elsevier}
}

@book{etingof2015tensor,
  title={Tensor categories},
  author={Etingof, Pavel and Gelaki, Shlomo and Nikshych, Dmitri and Ostrik, Victor},
  volume={205},
  year={2015},
  publisher={American Mathematical Soc.}
}

@article{eklund2018semigroups,
  title={Semigroups in complete lattices},
  author={Eklund, Patrik and Garc{\'\i}a, J Guti{\'e}rrez and H{\"o}hle, Ulrich and Kortelainen, Jari},
  journal={Quant. Modul. Related Topics},
  volume={54},
  year={2018},
  publisher={Springer}
}

@book{johnson20212,
  title={2-dimensional categories},
  author={Johnson, Niles and Yau, Donald},
  year={2021},
  publisher={Oxford University Press, USA}
}

@inproceedings{mellies2022functorial,
  title={A Functorial Excursion Between Algebraic Geometry and Linear Logic},
  author={Melli{\`e}s, Paul-Andr{\'e}},
  booktitle={Proceedings of the 37th Annual ACM/IEEE Symposium on Logic in Computer Science},
  pages={1--13},
  year={2022}
}

@incollection{lack20092,
  title={A 2-categories companion},
  author={Lack, Stephen},
  booktitle={Towards higher categories},
  pages={105--191},
  year={2009},
  publisher={Springer}
}

@book{hartshorne2013algebraic,
  title={Algebraic geometry},
  author={Hartshorne, Robin},
  volume={52},
  year={2013},
  publisher={Springer Science \& Business Media}
}

@inproceedings{mellies2007resource,
  title={Resource modalities in game semantics},
  author={Melli{\`e}s, Paul-Andr{\'e} and Tabareau, Nicolas},
  booktitle={22nd Annual IEEE Symposium on Logic in Computer Science (LICS 2007)},
  pages={389--398},
  year={2007},
  organization={IEEE}
}

@article{mellies2010resource,
  title={Resource modalities in tensor logic},
  author={Melli{\`e}s, Paul-Andr{\'e} and Tabareau, Nicolas},
  journal={Annals of Pure and Applied Logic},
  volume={161},
  number={5},
  pages={632--653},
  year={2010},
  publisher={Elsevier}
}

@book{Adamek_Rosicky_1994, 
  place={Cambridge},
  series={London Mathematical Society Lecture Note Series},
  title={Locally Presentable and Accessible Categories},
  publisher={Cambridge University Press},
  author={Adamek, J. and Rosicky, J.},
  year={1994},
  collection={London Mathematical Society Lecture Note Series}
}

@inproceedings{kelly2006review,
  title={Review of the elements of 2-categories},
  author={Kelly, G Max and Street, Ross},
  booktitle={Category Seminar: Proceedings Sydney Category Theory Seminar 1972/1973},
  pages={75--103},
  year={2006},
  organization={Springer}
}

@article{kaledin2020adjunction,
  title={Adjunction in 2-categories},
  author={Kaledin, Dmitry Borisovich},
  journal={Russian Mathematical Surveys},
  volume={75},
  number={5},
  pages={883},
  year={2020},
  publisher={IOP Publishing}
}

@article{street1972formal,
  title={The formal theory of monads},
  author={Street, Ross},
  journal={Journal of Pure and Applied Algebra},
  volume={2},
  number={2},
  pages={149--168},
  year={1972},
  publisher={North-Holland}
}

@article{riehl2016homotopy,
  title={Homotopy coherent adjunctions and the formal theory of monads},
  author={Riehl, Emily and Verity, Dominic},
  journal={Advances in Mathematics},
  volume={286},
  pages={802--888},
  year={2016},
  publisher={Elsevier}
}

@inproceedings{danos1993structure,
  title={The structure of exponentials: Uncovering the dynamics of linear logic proofs},
  author={Danos, Vincent and Joinet, Jean-Baptiste and Schellinx, Harold},
  booktitle={Computational Logic and Proof Theory: Third Kurt G{\"o}del Colloquium, KGC'93 Brno, Czech Republic, August 24--27, 1993 Proceedings 3},
  pages={159--171},
  year={1993},
  organization={Springer}
}

@article{mcpheat2023categorical,
  title={Categorical vector space semantics for {L}ambek Calculus with a relevant modality},
  author={McPheat, Lachlan and Sadrzadeh, Mehrnoosh and Wazni, Hadi and Wijnholds, Gijs},
  journal={Compositionality},
  volume={5},
  year={2023},
  publisher={Episciences. org}
}

@article{eckmann1962group,
  title={Group-like structures in general categories I multiplications and comultiplications},
  author={Eckmann, Beno and Hilton, Peter J},
  journal={Mathematische Annalen},
  volume={145},
  number={3},
  pages={227--255},
  year={1962},
  publisher={Springer}
}

@inproceedings{Harington2025,
  title={$\infty$-Categorical Models of Linear Logic},
  author={Harington, Eli{\`e}s and Mimram, Samuel},
  booktitle={10th International Conference on Formal Structures for Computation and Deduction (FSCD 2025)},
  pages={23--1},
  year={2025},
  organization={Schloss Dagstuhl--Leibniz-Zentrum f{\"u}r Informatik}
}

@inproceedings{beke2000sheafifiable,
  title={Sheafifiable homotopy model categories},
  author={Beke, Tibor},
  booktitle={Mathematical Proceedings of the Cambridge Philosophical Society},
  volume={129},
  number={3},
  pages={447--475},
  year={2000},
  organization={Cambridge University Press}
}

@article{porst2008categories,
  title={On categories of monoids, comonoids, and bimonoids},
  author={Porst, Hans-E},
  journal={Quaestiones Mathematicae},
  volume={31},
  number={2},
  pages={127--139},
  year={2008},
  publisher={Taylor \& Francis}
}

@inproceedings{kelly2006doctrinal,
  title={Doctrinal adjunction},
  author={Kelly, G Max},
  booktitle={Category Seminar: Proceedings Sydney Category Theory Seminar 1972/1973},
  pages={257--280},
  year={2006},
  organization={Springer}
}

@article{curien2005introduction,
  title={Introduction to linear logic and ludics, {P}art {I}},
  author={Curien, Pierre-Louis},
  journal={arXiv preprint cs/0501035},
  year={2005}
}

@book{makkai1989accessible,
  title={Accessible Categories: The Foundations of Categorical Model Theory: The Foundations of Categorical Model Theory},
  author={Makkai, Mih{\'a}ly and Par{\'e}, Robert},
  volume={104},
  year={1989},
  publisher={American Mathematical Soc.}
}

@article{petric2002coherence,
  title={Coherence in substructural categories},
  author={Petri{\'c}, Zoran},
  journal={Studia Logica},
  volume={70},
  number={2},
  pages={271--296},
  year={2002},
  publisher={Springer}
}

@article{olarte2015subexponential,
  title={Subexponential concurrent constraint programming},
  author={Olarte, Carlos and Pimentel, Elaine and Nigam, Vivek},
  journal={Theoretical Computer Science},
  volume={606},
  pages={98--120},
  year={2015},
  publisher={Elsevier}
}

@book{riehl2017category,
  title={Category theory in context},
  author={Riehl, Emily},
  year={2017},
  publisher={Courier {D}over {P}ublications}
}

@article{zaganidis2017classifiers,
  title={Classifiers for monad morphisms and adjunction morphisms},
  author={Zaganidis, Dimitri},
  journal={Theory and Applications of Categories},
  volume={32},
  number={31},
  pages={1050--1097},
  year={2017}
}

@book{borceux1994handbook:vol2,
  title={Handbook of categorical algebra: Categories and structures},
  author={Borceux, Francis},
  volume={2},
  year={1994},
  publisher={Cambridge University Press}
}

@book{aguiar2010monoidal,
  title={Monoidal functors, species and Hopf algebras},
  author={Aguiar, Marcelo and Mahajan, Swapneel Arvind},
  volume={29},
  year={2010},
  publisher={Citeseer}
}

@book{lurie2009higher,
  title={Higher topos theory},
  author={Lurie, Jacob},
  year={2009},
  publisher={Princeton University Press}
}

@misc{rel:cat:nlab,
  title = {Relevance Monoidal Categories},
  howpublished = {\url{https://ncatlab.org/nlab/show/relevance+monoidal+category}},
  note = {Accessed: 05.06.2025}
}

@article{zwanziger2024natural,
  title={The natural display topos of coalgebras},
  author={Zwanziger, Colin},
  journal={arXiv preprint arXiv:2405.00498},
  year={2024}
}

@article{shulman2008set,
  title={Set theory for category theory},
  author={Shulman, Michael A},
  journal={arXiv preprint arXiv:0810.1279},
  year={2008}
}

@article{lack2005limits,
  title={Limits for lax morphisms},
  author={Lack, Stephen},
  journal={Applied Categorical Structures},
  volume={13},
  pages={189--203},
  year={2005},
  publisher={Springer}
}

@inproceedings{nigam2009algorithmic,
  title={Algorithmic specifications in linear logic with subexponentials},
  author={Nigam, Vivek and Miller, Dale},
  booktitle={Proceedings of the 11th ACM SIGPLAN conference on Principles and practice of declarative programming},
  pages={129--140},
  year={2009}
}

@inproceedings{kanovich2017undecidability,
  title={Undecidability of the Lambek calculus with subexponential and bracket modalities},
  author={Kanovich, Max and Kuznetsov, Stepan and Scedrov, Andre},
  booktitle={International Symposium on Fundamentals of Computation Theory},
  pages={326--340},
  year={2017},
  organization={Springer}
}

@book{girard2011blind,
  title={The Blind Spot: lectures on logic},
  author={Girard, Jean-Yves},
  year={2011},
  publisher={European Mathematical Society}
}

@inproceedings{wadler1990linear,
  title={Linear types can change the world!},
  author={Wadler, Philip},
  booktitle={Programming concepts and methods},
  volume={3},
  number={4},
  pages={5},
  year={1990},
  organization={North-Holland, Amsterdam}
}

@article{brady2021idris,
  title={Idris 2: Quantitative type theory in practice},
  author={Brady, Edwin},
  journal={arXiv preprint arXiv:2104.00480},
  year={2021}
}

@article{walker2005substructural,
  title={Substructural type systems},
  author={Walker, David},
  journal={Advanced topics in types and programming languages},
  pages={3--44},
  year={2005},
  publisher={The MIT Press Cambridge}
}

@article{poinsot2022multipliers,
  title={Multipliers of a semigroup object in a monoidal category},
  author={Poinsot, Laurent},
  journal={arXiv preprint arXiv:2206.01416},
  year={2022}
}

@article{longley1995realizability,
  title={Realizability toposes and language semantics},
  author={Longley, John R},
  year={1995},
  publisher={University of Edinburgh. College of Science and Engineering. School of~…}
}

@article{bakel2011strict,
  title={Strict intersection types for the lambda calculus},
  author={Bakel, Steffen Van},
  journal={ACM Computing Surveys (CSUR)},
  volume={43},
  number={3},
  pages={1--49},
  year={2011},
  publisher={ACM New York, NY, USA}
}

@inproceedings{bierman1995categorical,
  title={What is a categorical model of intuitionistic linear logic?},
  author={Bierman, Gavin M},
  booktitle={International Conference on Typed Lambda Calculi and Applications},
  pages={78--93},
  year={1995},
  organization={Springer}
}

@book{lafont1988logiques,
  title={Logiques, categories et machines: implantation de langages de programmation guid{\'e}e par la logique cat{\'e}gor},
  author={Lafont, Yves},
  year={1988}
}

@book{kleene1969formalized,
  title={Formalized recursive functionals and formalized realizability},
  author={Kleene, Stephen Cole},
  volume={89},
  year={1969},
  publisher={American Mathematical Society Providence, RI}
}

@inproceedings{bezem2018realizability,
  title={Realizability at Work: Separating Two Constructive Notions of Finiteness},
  author={Bezem, Marc and Coquand, Thierry and Nakata, Keiko and Parmann, Erik},
  booktitle={22nd International Conference on Types for Proofs and Programs (TYPES 2016)},
  pages={6--1},
  year={2018},
  organization={Schloss Dagstuhl--Leibniz-Zentrum f{\"u}r Informatik}
}

@incollection{scott1975lambda,
  title={Lambda calculus and recursion theory (preliminary version)},
  author={Scott, Dana},
  booktitle={Studies in Logic and the Foundations of Mathematics},
  volume={82},
  pages={154--193},
  year={1975},
  publisher={Elsevier}
}

@inproceedings{brunel2014core,
  title={A core quantitative coeffect calculus},
  author={Brunel, Alo{\"\i}s and Gaboardi, Marco and Mazza, Damiano and Zdancewic, Steve},
  booktitle={European Symposium on Programming Languages and Systems},
  pages={351--370},
  year={2014},
  organization={Springer}
}

@article{vollmer2024mixed,
  title={A Mixed Linear and Graded Logic: Proofs, Terms, and Models (with appendices)},
  author={Vollmer, Victoria and Marshall, Danielle and Eades III, Harley and Orchard, Dominic},
  journal={arXiv preprint arXiv:2401.17199},
  year={2024}
}

@article{speight2026impredicativity,
  title={Impredicativity in Linear Dependent Type Theory},
  author={Speight, Sam and van der Weide, Niels},
  journal={arXiv preprint arXiv:2602.08846},
  year={2026}
}

@article{girard1987linear,
  title={Linear logic},
  author={Girard, Jean-Yves},
  journal={Theoretical computer science},
  volume={50},
  number={1},
  pages={1--101},
  year={1987},
  publisher={Elsevier}
}

@article{kleene1945interpretation,
  title={On the interpretation of intuitionistic number theory},
  author={Kleene, Stephen Cole},
  journal={The {J}ournal of {S}ymbolic {L}ogic},
  volume={10},
  number={4},
  pages={109--124},
  year={1945},
  publisher={Cambridge University Press}
}

@incollection{feferman2006language,
  title={A language and axioms for explicit mathematics},
  author={Feferman, Solomon},
  booktitle={Algebra and Logic: Papers from the 1974 Summer Research Institute of the Australian Mathematical Society, Monash University, Australia},
  pages={87--139},
  year={2006},
  publisher={Springer}
}

\end{document}